\documentclass[11pt,a4paper]{amsart}
\usepackage[foot]{amsaddr}
\usepackage{texmac}
\usepackage{enumerate}

\usepackage[color,notcite,notref]{showkeys}   
\definecolor{labelkey}{rgb}{0.5,0.5,1}

\operators{Stab,Prim,Inn,Out}
\let\prim\Prim
\bbsymbols{b}{C,F,Z,Q}
\calsymbols{c}{H,U,X,Y,P,M}
\let\le\leqslant
\let\ge\geqslant

\def\qe#1{$#1$-quasi-elementary}
\def\dsum_#1_#2{\sum_{{#1}\atop {#2}}}
\begin{document}

\title{Brauer relations in finite groups}
\author{Alex Bartel$^1$}
\address{$^1$Mathematics Institute, Zeeman Building, University of Warwick,
Coventry CV4 7AL, UK}
\author{Tim Dokchitser$^2$}
\address{$^2$Department of Mathematics, University Walk, Bristol BS8 1TW, UK}
\email{a.bartel@warwick.ac.uk, tim.dokchitser@bristol.ac.uk}
\llap{.\hskip 10cm} \vskip -0.8cm
\subjclass[2000]{19A22, 20B05, 20B10}
\ufootnote{To appear in J. Eur. Math. Soc.
  \\[-7pt]}
\maketitle

\begin{abstract}
If $G$ is a non-cyclic finite group, non-isomorphic $G$-sets $X, Y$ may
give rise to isomorphic permutation representations $\C[X]\iso \C[Y]$.
Equivalently, the map from the Burnside ring to the rational representation ring
of $G$ has a kernel. Its elements are called Brauer relations,
and the purpose
of this paper is to classify them in all finite groups,
extending the Tornehave-Bouc classification in the case of
$p$-groups.
\end{abstract}

\tableofcontents

\newpage

\section{Introduction}\label{s:intro}

\subsection{Background and main result}
The Burnside ring $B(G)$ of a finite group $G$ is the free abelian group
on isomorphism classes of finite $G$-sets modulo the relations
$[X]+[Y]=[X\amalg Y]$ and with multiplication $[X]\cdot[Y]=[X\times Y]$.
There is a natural ring homomorphism from the Burnside ring to the rational
representation ring of $G$,
$$
   B(G) \lar R_\Q(G), \qquad X\mapsto \Q[X].
$$
The purpose of this paper is to describe its kernel $K(G)$.

Both the kernel and the cokernel have been studied extensively.
The cokernel is finite of exponent dividing $|G|$ by Artin's induction
theorem, and Serre  remarked that it need not be
trivial (\cite{SerLi} Exc. 13.4).
It is trivial for $p$-groups \cite{Feit,Ritter,Segal}
and it has been determined in many special cases \cite{DK,Berz,HT}.

Elements of the kernel $K(G)$ are called
\emph{Brauer relations} or \emph{($G$-)relations}.
The most general result on $K(G)$ is due to
Tornehave \cite{Tor-84} (see \cite[2.4]{Kah-08}) and
Bouc \cite{Bouc}, who independently described it for $p$-groups.

There is a bijection $H\mapsto G/H$ between conjugacy
classes of subgroups of~$G$ and isomorphism classes of
transitive $G$-sets, and we will write elements $\Theta\in B(G)$ as
$\Theta=\sum_i n_i H_i$ using this identification.
In this notation,
$$
  \Theta \in K(G) \quad\Longleftrightarrow\quad
  \sum_i n_i\Ind_{H_i}^G\triv_{H_i}=0.
$$
If we allow inductions of arbitrary 1-dimensional characters instead of
just the trivial character,
isomorphisms between sums of such inductions are called monomial
relations. Deligne \cite[\S 1]{Del-73} described all monomial relations in
soluble groups, following Langlands \cite{Lan-70}.
For arbitrary finite groups, a generating set of monomial relations
was given by Snaith \cite{Sna-88}.

Following the approach of Langlands, Deligne, Tornehave and Bouc,
we consider a relation ``uninteresting'' if it is induced from a proper subgroup
or lifted from a
proper quotient of $G$ (see \S\ref{s:first}).
We call a relation \emph{imprimitive} if
it is a linear combination of such relations from proper subquotients
and \emph{primitive} otherwise, and we let $\Prim(G)$ denote
the quotient of $K(G)$ by the subgroup of imprimitive relations. The motivation
for this approach is that if one wants to prove a statement that holds for
all Brauer relations, and if this statement behaves well under induction
and inflation, then it is enough to prove it for primitive relations
(see also \S \ref{sec:applics}).

In this paper we classify finite groups that have primitive relations
and determine $\Prim(G)$:

\newpage

\namedthm{A}
\label{thm:A}
Let $p$ and $l$ denote prime numbers.
A finite non-cyclic group $G$ has a primitive relation if and only if either
\leftmargini1.7em
\begin{enumerate}
\item $G$ is dihedral of order $2^n\ge 8$; or
\item $G=(C_p\times C_p)\rtimes C_p$ is the Heisenberg group of order $p^3$
      with $p\ge 3$;~or
\item $G$ is an extension
$$
  1 \lar S^d \lar G \lar Q \lar 1,
$$
where $S$ is simple, $Q$ is quasi-elementary, the natural map
\hbox{$Q\!\rightarrow\!\Out S^d$} is injective and, moreover, either
\begin{enumerate}
\item $S^d$ is minimal among the normal subgroups of $G$\\
      (for soluble $G$, this is equivalent to $G\cong \bF_l^d\rtimes Q$ with
      $\bF_l^d$ a faithful irreducible representation of
      $Q$) or
\item $G=(C_l\rtimes P_1)\times (C_l\rtimes P_2)$
      with cyclic (possibly trivial) $p$-groups $P_i$ that act faithfully
      on $C_l\times C_l$ with $l\ne p$; or
\end{enumerate}
\item $G=C\rtimes P$ is quasi-elementary, $P$ is a $p$-group,
      $|C|=l_1\cdots l_t>1$ with $l_i\neq p$ distinct primes,
      the kernel $K=\ker(P\rightarrow \Aut C)$ is trivial,
      or isomorphic to $D_8$, or has normal
      $p$-rank one (see Proposition \ref{prop:prankone}). Moreover,
      writing $K_j=\bigcap_{i\neq j} \ker(P\rightarrow \Aut C_{l_i})$,
      either
\begin{enumerate}
\item $K=\{1\}$, $t>1$, and all $K_j$ have the same non-trivial image in the
Frattini quotient of $P$; or
\item $K\cong C_p$, $P\iso K\times (P/K)$, and all $K_j$ have the same
two-dimensional image in the Frattini quotient of $P$; or
%
\item $|K|>p$ or $P$ is not a direct product by $K$,
      and the graph $\Gamma$ attached to $G$ by
      Theorem \ref{thm:Knontriv} is disconnected.
\end{enumerate}
\end{enumerate}
\endnamedthm

\noindent
For these groups, $\Prim(G)$ is as follows. We write $\mu$ for the M\"obius
function.

$$
\begingroup
\def\baselinestretch{1}
\def\l#1{\langle#1\rangle}
\def\vrt#1#2{\begin{array}{c}\hskip -0.45em#1\cr\hskip -0.45em#2\end{array}}
\def\vrtt#1#2#3{\begin{array}{c}\hskip -0.45em#1\cr\hskip -0.45em#2\cr\hskip -0.45em#3\end{array}}
\def\vrttt#1#2#3#4{\begin{array}{c}\hskip -0.45em#1\cr\hskip -0.45em#2\cr\hskip -0.45em#3\cr\hskip -0.45em#4\end{array}}
\def\vrtttt#1#2#3#4#5{\begin{array}{c}\hskip -0.45em#1\cr\hskip -0.45em#2\cr\hskip -0.45em#3\cr\hskip -0.45em#4\cr\hskip -0.45em#5\end{array}}
\def\tif{\text{ if }}
\def\telse{\text{ else}}
\def\vc#1#2{\smash{\raise#1pt\hbox{$#2$}}}
\def\vp#1{\vphantom{\vbox to #1pt{\vfill}}}
\hskip -1cm
\begin{array}{|@{\>}c@{}|@{\>}c@{}|l@{\hskip-1pt}|}
\hline
\text{$\!$Case$\!$ }&\Prim(G)&\text{Basis of $\Prim(G)$}\cr
\hline
\text{1}\vp{17}  &  \Z/2\Z  &
\vrt{\Theta = H\!-H'\!+\!ZH'\!-\!ZH,\hfill}{\quad\>\>
\text{\smaller[3]$H\iso C_2$ and $H'\iso C_2$ are non-conjugate non-central,
$Z=Z(G)\iso C_2$}}\cr
\hline
\text{2} & (\Z/p\Z)^{p} &
  \vrt{\Theta_j = \l{y}\!-\!\l{xy^j}\!-\!\l{y,z}\!+\!\l{xy^j,z},
    \>\>1\!\le\!j\!\le\! p,}{\quad\>\>
    \text{\smaller[3]{$G=\l{x,z}\!\rtimes\!\l{y},\>\> z\in Z(G)$}}\hfill}
   \cr
\hline
\vbox to 4.5em{\vskip 6.5mm\hbox{\hfill $\,$3a\hfill}\vskip 6.5mm
      \hbox to 1.5em{\hrulefill$\>$}\vskip 5.5mm\hbox{\hfill $\,$3b\hfill}\vskip 3.5mm}
&
\vbox to 4.5em{\vskip 1mm\hbox{\hfill
      $\vrtt{\Z\tif Q\text{ cyclic}}{\vphantom{\int^X}\Z/p\Z\telse}
            {\raise4pt\hbox{\text{\smaller[4]($Q$ $p$-quasi-elementar\rlap{y)}}}}$
      \hfill}\vskip 0mm
      \hbox to 7em{$\>$\hrulefill}\vskip 1mm\hbox to 7em{\hfill$\>$
      $\vrtt{\Z\phantom{/p\Z}\tif Q\!=\!\{1\}}
           {\Z/p\Z\tif Q\!\ne\!\{1\}}
           {\raise2pt\hbox{\text{\smaller[4]($Q$=$P_1\times P_2$)}}}$
      \hfill}}
&
     \vrt{
     \left.\!\vrtttt{\Theta =
      C_{p^k}\!-\!p\,Q\!-\!C_l\!\rtimes\!C_{p^k} \!+\! p\,G\>\>
     \tif d\!=\!1, Q\!=\!C_{p^{k+1}}\hfill}
     {\Theta=G\!-\!Q\!+\!\alpha(C_n\!-\!C_l\!\rtimes\! C_n)\!+\!\beta(C_m\!-\!C_l\!\rtimes\! C_m)\hfill}
         {\quad\>\>\text{\smaller[2]if $d=1, Q=C_{mn}, \alpha m+\beta n=1$ (any such $m,n>1$)}\hfill}
         {\Theta =G\!-\!Q\!+\!\sum_U(U\!\rtimes\! N_QU\!-\!(C_l)^d\!\rtimes\! N_QU)\hfill}
         {\quad\>\>\text{\smaller[2]if $d\!>\!1$; sum over $U\!\subset\!(C_l)^d$ of index $l$ up to $G$-conjugacy}\hfill}
     \!\!\right\}
     \vrt{G\!\iso\!(C_l)^d\!\rtimes\! Q}{\text{soluble}}\hfill}
     {\Theta=\text{any relation of the form $G\!+\!\sum_{\scriptscriptstyle H\ne G} a_H H\hskip 1.9em$\raise2pt\hbox{$\bigr\}\>S\not\iso C_l$}}\hfill}
     \hfill\cr
\hline

\text{4a} &  \Z/p\Z
 & \vrt{\Theta = \sum_{U\le C}\mu(|U|)(MU\!-\!M'U),\> M,M'\!\le\!P\text{ of index $p$}\hfill}
    {\quad\>\>\text{\smaller[2]with signatures $(1,\ldots,1),(0,\ldots,0)$,
respectively (c.f. Prop. \ref{prop:condsKtriv}) }}\cr
\hline
\text{4b} & (\Z/p\Z)^{p-2} & \vrtt{\Theta_i =\sum_{U\le CK}\mu(|U|)(H_1 U\!-\!H_i U)\text{ for }1<i<p,\hfill}
{\quad\>\>\text{\smaller[3]$H_j\le P$ of index $p$, $K\nleq H_j$ and
$(H_j\cap K_1)\Phi(P)/\Phi(P)=L_j$,}\hfill}
{\quad\>\>\text{\smaller[3]%
$L_j$ distinct lines in $K_1\Phi(P)/\Phi(P)$ other than $K\Phi(P)/\Phi(P)$, $1\le j<p$}\hfill}\cr
\hline
\vc2{\text{4c}} &
  \vp{24}
  \vc0{\vrtt{(\Z/p\Z)^{d-1}}{\text{\smaller[3]$d=\#\>$connected}}{\raise4pt\hbox{\text{\smaller[3]components of $\Gamma$}}}}
&
\vrt{\Theta_i =\sum_{U\le C\centralCp}\mu(|U|)(H_1U\!-\!H_i U),\>\> 2\le i\le d,\hfill}
  {\quad\>\>\text{\smaller[3]$H_j$ any vertex in the $j$th connected component of $\Gamma$ and
  $\centralCp\iso C_p\le Z(K)$}\hfill}\\[9pt]
\hline
\end{array}
\endgroup
$$
\smallskip

\subsection{Overview of the proof}

Our analysis of finite groups follows a standard pattern

\begin{center}
abelian --- $p$-groups --- quasi-elementary --- soluble --- all finite,
\end{center}
\noindent
with a somewhat surprising twist that the difficulty of understanding
primitive relations seems to decrease from the middle to the sides.

It is classical that the only abelian groups that have primitive relations
are $G=C_p\times C_p$. On the opposite side,
Solomon's induction theorem together with the fact that
imprimitive relations form an ideal in
the Burnside ring immediately allows us to deal with a large class of groups:
if $G$ has a proper non-quasi-elementary quotient,
then $G$ has no primitive relations
(Corollary \ref{cor:alaDeligne} and Theorem
\ref{thm:notquasiel}(3)). Similarly, using Theorem \ref{thm:Tims},
we get the same conclusion when $G$ has non-cyclic quasi-elementary
quotients for two distinct primes $p\ne q$
(Theorem \ref{thm:notquasiel}), and deduce Theorem \ref{thm:A}
in the non-soluble case.
This strategy was inspired by Deligne's work on monomial relations.

The $p$-group case and the soluble case are somewhat more involved.
Our main tool for showing imprimitivity is the fact that in quasi-elementary
groups,
a relation $\sum n_H H$ with all $H$ contained in a proper subgroup of $G$
is imprimitive
(Proposition \ref{prop:biggroup}). This is surprisingly useful.
For instance, together with Bouc's `moving lemma' (\cite{Bouc} Lemma~6.15)
it gives an alternative proof of the Tornehave-Bouc classification
in the $p$-group case (see~\S\ref{sec:pgroups}).
The classification of primitive relations in soluble groups that are not
quasi-elementary is also not hard (see~\S\ref{sec:solvable}).

The most subtle case is that of quasi-elementary groups (\S\ref{sec:qegen}).
Recall that a quasi-elementary group is one of the form
$G=C\rtimes P$ with $P$ a \hbox{$p$-group} and $C$ cyclic of order coprime
to $p$. Assuming that such a $G$ has a primitive relation, we
analyse the kernel of the action of $P$ on $C$ (\S\ref{sec:kernel}) and
decompose all permutation representations of $G$ explicitly
into irreducible characters (\S\ref{sec:somerels}).
We show that $\Prim(G)$ is generated by relations of the form
$$
  \Theta = \sum_{U\le C\cdot Z(G)} \mu(|U|)(UH_1-UH_2),
$$
where $H_1,H_2\!\le\! G$ are of maximal size among those subgroups that intersect
$C\cdot Z(G)$ trivially, unless $Z(G)$ is trivial, in which case $H_1,H_2$
are of index $p$ in $P$.
This already settles Theorem \ref{thm:B} below, but the remaining issue
of primitivity of these generating relations is quite tricky.
To show that $\Theta$ as above is imprimitive, it is not enough
to show that it is neither lifted from a quotient nor induced from a
subgroup, since $\Theta$ could be a sum of relations each of which is either
lifted or induced.
It becomes necessary to explicitly split the maximal size subgroups
into classes in such a way that any relation involving two subgroups
from different classes has to be primitive. This is the
general spirit of sections \ref{sec:Ktriv} and \ref{sec:Knontriv},
which complete the proof of Theorem~\ref{thm:A}.

\smallskip

\subsection{Remarks and applications}\label{sec:applics}
Note that for non-soluble groups in Theorem \ref{thm:A}(3a),
$\Prim(G)$ is generated by any relation $\Theta=\sum_H n_H H$
with $n_G=\pm 1$ (Theorem \ref{thm:notquasiel}). An explicit construction
of such a relation can be found in \cite[Theorem 2.16(i)]{Sna-88}.
We note also that the relations in Theorem \ref{thm:A} for soluble groups are
fairly canonical, see e.g. Remark \ref{remcanrel}.

One of the reasons one is interested in Brauer relations comes from number
theory. In fact, the motivation for the Langlands-Deligne classification of 
monomial relations in soluble groups \cite{Lan-70,Del-73} was to build
a well-defined theory of $\epsilon$-factors of Galois
representations starting with one-dimensional characters;
to do this, one needs to prove that the $\epsilon$-factors
of one-dimensional characters cancel in all monomial relations
of local Galois groups.

If $F/\Q$ is a Galois extension of number fields,
arithmetic invariants of subfields $K\subset F$
may be viewed, via the Galois correspondence
$K\leftrightarrow\Gal(F/K)$, as functions of subgroups of $G=\Gal(F/\Q)$.
Some functions, such as the discriminant $K\mapsto \Delta(K)$ extended
to $B(G)\to\Q^\times$ by linearity, factor through the representation ring
$R_\Q(G)$ and so cancel in all Brauer relations.
On the other hand, the class number $h(K)$, the regulator $R(K)$
or the number of roots of unity $w(K)$ are not `representation-theoretic',
and do not cancel in general.
However, their combination $hR/w$ does, as it is
the leading term of the Dedekind $\zeta$-function $\zeta_K(s)$ at $s=1$,
and $\zeta$-functions are representation-theoretic by Artin formalism
for $L$-functions.

Thus, Brauer relations can provide non-trivial relationships between
different arithmetic invariants, like the class numbers and the regulators
of various intermediate fields. This point of view proved to be very
fruitful to study class numbers and unit groups \cite{Bra, Kur, Wal, Smi-01},
related Galois module structures \cite{BB-04,Bar-11}
and Mordell-Weil groups and other arithmetic invariants of
elliptic curves and abelian varieties \cite{squarity, tamroot, Bar-09}.
In a slightly different direction, a
verification of the vanishing in Brauer relations of \emph{conjectural}
special values of $L$-functions can be regarded as strong
evidence for the corresponding conjectures.
This has been carried out in the case of
the Birch and Swinnerton-Dyer conjecture in \cite{squarity} and in the
case of the Bloch-Kato conjecture in \cite{Burns}.


One concrete number-theoretic application of Brauer relations is the theory 
of `regulator constants', used in the proof of the
Selmer parity conjecture for elliptic curves over $\Q$ \cite{squarity}, 
questions related to Selmer growth \cite{tamroot, kurast, Bar-09}, and also 
to analyse unit groups and higher $K$-groups of number fields 
\cite{Bar-11, BarSm}. 
The regulator constant $\cC_\Theta(\Gamma)\in \Q^\times$ is an invariant
attached to a $\Z[G]$-module~$\Gamma$ and a Brauer relation $\Theta$ 
in $G$. For applications to elliptic curves the most important regulator
constant is that of the trivial $\Z[G]$-module $\Gamma=\triv$, as it
controls the $l$-Selmer rank of the curve over the ground field. 
For $\Theta=\sum_H n_H H$ it is simply
$$
  \cC_\Theta(\triv) = \prod_H |H|^{-n_H}.
$$
To deduce something about the Selmer rank, one relies on
Brauer relations in which this invariant, or rather its $l$-part,
is non-trivial. 
As an application of Theorem \ref{thm:A}, in \S\ref{s:appl} we settle
a question left unanswered in \cite{squarity, tamroot, kurast, Bar-09}, namely
which groups have such a Brauer relation. This is done in
Theorem \ref{thm:regconsttriv} and Corollary \ref{cor:regconstsubquo};
for an example of number theoretic consequences of this result, 
see~\cite{Bar-12}.

For such applications one needs a collection of Brauer relations
that span $K(G)$ and that are `as simple as possible', but
whether they are imprimitive is less important.
Theorem~\ref{thm:A} describes the {\em smallest} list of groups such~that 
all Brauer relations in all finite groups
come\footnote{We would like to propose to use the word {\em indufted}
  instead of a vague `come' or a cumbersome `induced and/or lifted',
  but we were not brave enough to do this throughout the paper}
from such subquotients.
Let us 
give an alternative version of the classification
theorem with a much cleaner 
set of generating relations, that avoids
the fiddly combinatorial conditions of
Theorem \ref{thm:A} (especially 4a,4b,4c).
It is a direct consequence of Theorem~\ref{thm:A}.

\namedthm{B}
\label{thm:B}
All Brauer relations in soluble groups are generated by relations
$\Theta$ from subquotients $G$ of the following three types.
In every case, $G$ is an extension $1\to C\to G\to Q\to 1$ with
$Q$ quasi-elementary and acting faithfully on $C$.
\begin{enumerate}
\item $C=C_l$, $l$ a prime (so $G=C\rtimes Q$), $H\le Q$,
and
$$
  \Theta=[Q\!:\!H]\>G-[Q\!:\!H]\>Q+H-CH.
$$
\item $C=(C_l)^d$, with $l$ a prime, $d\ge 2$, $G=C\rtimes Q$, and
$$
  \Theta = G-Q+\sum_U(U\rtimes N_QU-(C_l)^d\rtimes N_QU),
$$
the sum taken over representatives of $G$-conjugacy
classes of subgroups $U\le (C_l)^d$  of index $l$.
\item $C$ is cyclic, $Q$ is an abelian $p$-group,
$H_1, H_2\le G$ intersect $C$ trivially, $|H_1|=|H_2|$, and
$$
  \Theta = \sum_{U\le C} \mu(|U|)(UH_1-UH_2).
$$
\end{enumerate}
Conversely, all $\Theta\in B(G)$ of the listed type are Brauer relations,
not necessarily primitive.
Finally, relations from subquotients of type (1), (3) and
\begin{itemize}
\item[\textit{(2$^{\>\prime}$)}]
$C=S^d$ with $d\ge 1$ and $S$ simple, $G$ is not quasi-elementary, and
$$
  \Theta=\text{any relation of the form $G\!+\!\sum_{\scriptscriptstyle H\ne G} a_H H$}
$$
\end{itemize}
generate all Brauer relations in all finite groups.

\endnamedthm

\begin{acknowledgements}
We are really fortunate to have had several extremely conscientious
referees who made numerous corrections, pertinent remarks, and suggestions for
improvement, and who we are very grateful to.
We especially wish to thank the JEMS referee for 
an incredibly thorough reading of the manuscript. 

The second author is supported by a Royal Society
University Research Fellowship, and the first author was
partially supported by the EPSRC and is partially supported by
a Research Fellowship from the Royal Commission for the Exhibition
of 1851. Parts of this research were done at 
St Johns College, Robinson College and DPMMS in Cambridge, 
CRM in Barcelona, and Postech University in Pohang. 
We would like to thank these institutions
for their hospitality and financial support. 
\end{acknowledgements}

\subsection{Notation}
Throughout the paper, $G$ is a finite group; $Z(G)$ stands for the centre
of $G$ and $\Phi(G)$ for the Frattini subgroup;
whenever $Z(G)$ is a cyclic $p$-group, we write $\centralCp$ for the central
subgroup of order $p$;
we denote by $\triv$ the
trivial representation;
restriction from $G$ to $H$ and induction from $H$ to $G$ are denoted by
$\Res_H^G\rho$ and $\Ind_H^G\sigma$, respectively;
${}^gH$ stands for $gHg^{-1}$.

\section{First properties}\label{s:first}

Relations can be induced from and restricted to subgroups, and lifted
from and projected to quotients as follows:
let $\Theta=\sum_i n_i H_i$ be a $G$-relation.
\begin{itemize}
\item \textbf{Induction.} If $G'$ is a group containing $G$, then, by
transitivity of induction,
$\Theta$ can be induced to a $G'$-relation $\Ind^{G'}\Theta=\sum_i n_i H_i$.
\item \textbf{Inflation.} If $G\cong \tilde{G}/N$, then each $H_i$ corresponds
to a subgroup $\tilde{H_i}$
of $\tilde{G}$ containing $N$, and, inflating the permutation representations
from a quotient, we see that $\tilde{\Theta}=\sum_i n_i\tilde{H_i}$
is a $\tilde{G}$-relation.
\item \textbf{Restriction.} If $H$ is a subgroup of $G$, then by Mackey
decomposition $\Theta$ can be restricted to an $H$-relation
$$
  \Res_H\Theta=
  {\sum_i \Bigl(n_i\!\!\! \sum_{\scriptscriptstyle g\in H_i\backslash G/H}\!\!\!
H\cap {}^gH_i\Bigr)}.
$$
On the level of (virtual) $G$-sets this is simply the restriction of a $G$-set to $H$.
\item \textbf{Projection} (or \textbf{deflation}). If $N\normal G$, then
$N\Theta = \sum_i NH_i$ is a $G/N$-relation.
\end{itemize}
\begin{remark}
Note that by definition of multiplication in the Burnside ring,
$\Theta\cdot H = \Ind^G(\Res_H\Theta)$ for any $G$-relation
$\Theta$ and any subgroup $H\le G$.
\end{remark}

The number of isomorphism classes of irreducible rational representations of a
finite group $G$ is equal to the number of conjugacy classes of cyclic subgroups
of $G$ (see \cite[\S 13.1, Cor. 1]{SerLi}). Since the cokernel of
$B(G)\to R_{\Q}(G)$ is finite (see \cite[\S 13.1, Theorem 30]{SerLi}), the rank
of the kernel $K(G)$ is the number of conjugacy classes of non-cyclic subgroups.

Explicitly, Artin's induction theorem gives a relation for each non-cyclic
subgroup $H$ of $G$,
$$
  |H|\cdot\triv = \sum_C n_C C, \qquad n_C\in\Z,
$$
the sum taken over the cyclic subgroups of $H$. These are clearly linearly
independent, and thus give a basis of $K(G)\tensor\Q$.

\begin{example}
Cyclic groups have no non-zero relations.
\end{example}

\begin{example}\label{exer:G20}
Let $G=C_l\rtimes H$, with $l$ prime and $H\ne\{1\}$ acting
faithfully on $C$ (so $H$ is cyclic of order dividing $l-1$).
Let $\tilde{H}$ be any subgroup of $H$, set $\tilde{G} = C_l\rtimes \tilde{H}$.
Then,
$$
\tilde{H} - [H:\tilde{H}]\cdot H - \tilde{G} + [H:\tilde{H}]\cdot G
$$
is a relation. This can be checked by a direct computation,
using the explicit description of irreducible characters of $G$ in
Remark \ref{rmrk:charSemiDir}. (See e.g. Corollary \ref{cor:relmodC}.)
\end{example}

\begin{example}\label{ex:cpcp}

Let $G=C_p\times C_p$. All its proper subgroups are cyclic, so $K(G)$ has rank one.
It is generated
by $\Theta = 1 - \sum_C C + pG$, with the sum running over all subgroups of order $p$, 
as can be checked by an explicit decomposition into
irreducible characters, as above
(or see \cite{Bouc} or Proposition \ref{prop:SerreGroups} below).
\end{example}

\section{Imprimitivity criteria} \label{s:imprim}

\begin{lemma}\label{lem:deflations}
Let $G$ be a finite group, and
$\Theta = \sum_i n_i H_i$ a $G$-relation
in which each $H_i$ contains some non-trivial normal subgroup $N_i$ of $G$.
Then $\Theta$ is imprimitive.
\end{lemma}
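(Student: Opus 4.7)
The plan is to decompose $\Theta$ explicitly as an integer combination of relations lifted from proper quotients of $G$, using an inclusion--exclusion identity on a family of commuting operators.

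First I would introduce, for each normal subgroup $N\normal G$, the $\Z$-linear endomorphism $\sigma_N\colon B(G)\to B(G)$ sending every subgroup $H\le G$ to $NH$. Concretely, $\sigma_N=\mathrm{Inf}_{G/N}^G\circ\mathrm{Def}_{G\to G/N}$, so $\sigma_N$ carries $K(G)$ into $K(G)$, and for nontrivial $N$ the image lies in the subgroup of relations lifted from the proper quotient $G/N$ -- which is by definition imprimitive. Two elementary features will drive the argument: (i)~$\sigma_M\sigma_N=\sigma_{MN}$ (in particular all $\sigma_N$ commute), and (ii)~$\sigma_N(H)=H$ whenever $N\le H$.

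Next, let $\mathcal{N}=\{N_1,\ldots,N_r\}$ be the finite set of (distinct) nontrivial normals produced by the hypothesis, and form the commuting product
$$
 T\;=\;\prod_{N\in\mathcal{N}}\bigl(\mathrm{id}-\sigma_N\bigr)
 \;=\;\sum_{J\subseteq\mathcal{N}}(-1)^{|J|}\,\sigma_{N_J},\qquad N_J:=\prod_{N\in J}N.
$$
By~(ii), the factor $\mathrm{id}-\sigma_{N_i}$ annihilates $H_i$, and because the other factors commute with it one gets $T(H_i)=0$ for every $i$, hence $T(\Theta)=0$. Isolating the $J=\varnothing$ term rearranges this to
$$
 \Theta\;=\;\sum_{\varnothing\ne J\subseteq\mathcal{N}}(-1)^{|J|+1}\,\sigma_{N_J}(\Theta).
$$
Each $N_J$ with $J\ne\varnothing$ is a nontrivial normal subgroup of $G$, so whenever $N_J\ne G$ the term $\sigma_{N_J}(\Theta)$ is lifted from the proper quotient $G/N_J$ and hence imprimitive. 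In the borderline case $N_J=G$ one has $\sigma_G(\Theta)=\bigl(\sum_i n_i\bigr)\,G=0$, because $\sum_i n_i=\langle\triv,\sum_i n_i\Q[G/H_i]\rangle=0$ for any Brauer relation. This writes $\Theta$ as an integer combination of relations lifted from proper quotients of $G$, i.e. as an imprimitive relation.

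There is no serious obstacle to this approach: the only conceptual step is spotting that the operators $\sigma_N$ commute with multiplication rule $\sigma_M\sigma_N=\sigma_{MN}$ and that $\sigma_{N_i}(H_i)=H_i$, after which the inclusion--exclusion expansion mechanically delivers the required decomposition.
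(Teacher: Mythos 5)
Your proof is correct and is essentially the paper's argument in a different packaging: the paper iteratively replaces $\Theta$ by $\Theta-N_j\Theta$, i.e.\ applies exactly your product $\prod_j(\mathrm{id}-\sigma_{N_j})$, observes that the result vanishes, and concludes that $\Theta$ is a sum of projections lifted from proper quotients. Your inclusion--exclusion expansion with the commuting operators $\sigma_N$ (plus the correct handling of the case $N_J=G$ via $\sum_i n_i=0$) is just a cleaner way of writing the same telescoping decomposition.
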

\begin{proof}
Subtracting the projection onto $N_1$, we get a relation
$$
  \Theta - N_1\Theta = \sum_{i, H_i \ngeq N_1} n_i(H_i - N_1H_i),
$$
which consists of subgroups each of which contains one of $N_2,\ldots,N_k$.
Repeatedly replacing $\Theta$ by $\Theta-N_j\Theta$ we see that the remaining
relation is zero,
so $\Theta$ is a sum of relations that are lifted from quotients.
\end{proof}

\begin{lemma}\label{lem:noncycliccentre}
Let $G\not\iso C_p\times C_p$ be a finite group with non-cyclic centre.
Then $G$ has no primitive relations.
\end{lemma}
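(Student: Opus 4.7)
The plan is to reduce every $G$-relation to one whose subgroups all contain a non-trivial normal subgroup of $G$, so that Lemma \ref{lem:deflations} finishes the job. Since $Z(G)$ is non-cyclic, some prime $p$ divides $|Z(G)|$ with the $p$-part of $Z(G)$ non-cyclic, so $Z(G)$ contains a subgroup $V\iso C_p\times C_p$. Let $Z_1,\ldots,Z_{p+1}$ be the subgroups of $V$ of order $p$; each of $Z_1,\ldots,Z_{p+1}$ and $V$ itself is central, hence normal, in $G$.

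For every $H\le G$ with $H\cap V=\{1\}$ the product $HV$ equals $H\times V$, since $V$ is central. I define
$$
  \sigma_H \;=\; H \;-\; \sum_{i=1}^{p+1} HZ_i \;+\; p\,HV \;\in\; B(G).
$$
The claim is that $\sigma_H\in K(G)$ and is imprimitive. Indeed, $\sigma_H$ is obtained by inflating the Example \ref{ex:cpcp} relation $1-\sum_i Z_i+pV\in K(V)$ along the quotient $HV\twoheadrightarrow HV/H\iso V$ and then inducing from $HV$ to $G$; under inflation a subgroup $K\le V$ pulls back to $HK\le HV$, giving the displayed formula. If $HV$ is a proper subgroup of $G$, then $\sigma_H$ is induced from $HV$; if instead $HV=G$, then $G=H\times V$, and because $G\not\iso C_p\times C_p$ we must have $H\ne\{1\}$, so $\sigma_H$ is lifted along the proper quotient $G\twoheadrightarrow G/H\iso V$. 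In either case $\sigma_H$ is imprimitive.

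Now, given an arbitrary $G$-relation $\Theta=\sum_H n_H H$, consider
$$
  \Theta' \;=\; \Theta \;-\; \sum_{H\cap V=\{1\}} n_H\,\sigma_H.
$$
This is still an element of $K(G)$, and by construction every subgroup appearing in $\Theta'$ with non-zero coefficient satisfies $H\cap V\ne\{1\}$, hence contains one of the normal subgroups $Z_1,\ldots,Z_{p+1},V$. Lemma \ref{lem:deflations} now shows that $\Theta'$ is imprimitive, and since every $\sigma_H$ is imprimitive, so is $\Theta$.

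The only non-routine point is the imprimitivity of $\sigma_H$ in the boundary case $HV=G$, which is exactly where the hypothesis $G\not\iso C_p\times C_p$ is used: without it one could have $H=\{1\}$ and $V=G$, and $\sigma_{\{1\}}$ would be the primitive relation of Example \ref{ex:cpcp} itself. Everything else is bookkeeping with the Burnside-ring operations recalled in \S\ref{s:first}.
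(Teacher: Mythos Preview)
Your proof is correct and follows essentially the same approach as the paper's: pick $V\iso C_p\times C_p$ inside $Z(G)$, use the $C_p\times C_p$ relation lifted to $HV$ and induced to $G$ to eliminate every $H$ with $H\cap V=\{1\}$, then invoke Lemma~\ref{lem:deflations}. You are more explicit than the paper about the edge case $HV=G$, correctly observing that then $G=H\times V$ with $H\ne\{1\}$ (this is where $G\not\iso C_p\times C_p$ enters), so that $\sigma_H$ is lifted from the proper quotient $G/H$; the paper's proof handles this implicitly.
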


\begin{proof}
Let $Z=C_p\times C_p\le Z(P)$. For any $H\le G$ that intersects $Z$ trivially,
$HZ/H \cong C_p\times C_p$. By lifting the relation of Example \ref{ex:cpcp}
to $HZ$ and then inducing to $G$, we can replace any occurrence of $H$ in any
$G$-relation
by groups that intersect $Z$ non-trivially, using imprimitive relations.
Each such intersection is normal in $G$, so by Lemma \ref{lem:deflations}
the resulting relation is imprimitive as well.
\end{proof}

We will now develop criteria for a relation to be induced from a subgroup.

\begin{proposition}\label{prop:biggroupinj}
Let $G$ be a finite group and $D\le G$ a subgroup for
which the natural map $B(D)\rightarrow B(G)$ is injective.
If $\Theta=\sum_i n_i H_i$ is a $G$-relation
with $H_i\le D$ for all $i$, then $\Theta$ is induced from a $D$-relation.
\end{proposition}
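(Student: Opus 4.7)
The plan is to form the natural candidate $\tilde\Theta := \sum_i n_i[D/H_i]\in B(D)$ and show that (i) it maps to $\Theta$ in $B(G)$ under induction, and (ii) it is itself a $D$-relation. Part (i) is automatic from $\Ind_D^G[D/H]=[G/H]$; once part (ii) is proved, $\Theta$ is exhibited as induced from the $D$-relation $\tilde\Theta$.

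For part (ii), I would use the criterion (immediate from the determination of rational characters by their values on cyclic subgroups) that an element $\Xi$ of a Burnside ring lies in the corresponding $K$ if and only if for every cyclic subgroup $C$ the number $\phi_C(\Xi)$ of $C$-fixed points (extended by linearity) vanishes. So it suffices to verify $\phi^D_{C'}(\tilde\Theta)=0$ for every cyclic $C'\le D$.

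The main computation is the Mackey double-coset formula for fixed points. For any cyclic $C\le G$, combining Mackey with the elementary fact that $C$-fixed points on a $C$-set induced from a proper subgroup vanish yields
\[
\phi^G_C(\Theta) \;=\; \phi^G_C(\Ind_D^G\tilde\Theta) \;=\; \sum_{\substack{g\in C\backslash G/D\\ g^{-1}Cg\le D}}\phi^D_{g^{-1}Cg}(\tilde\Theta).
\]
The injectivity of $B(D)\to B(G)$ is precisely the statement that no two distinct $D$-conjugacy classes of subgroups of $D$ fuse into a single $G$-conjugacy class. So all the subgroups $g^{-1}Cg$ in the sum, being subgroups of $D$ that are mutually $G$-conjugate, lie in a single $D$-class $[C^{*}]_D$, and the identity collapses to $\phi^G_C(\Theta) = m(C)\cdot\phi^D_{C^{*}}(\tilde\Theta)$ for a non-negative integer $m(C)$.

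To conclude, take $C=C'$ for any cyclic $C'\le D$, regarded as a cyclic subgroup of $G$. The double coset of $g=e$ contributes, giving $m(C')\ge 1$ and $C^{*}=C'$; since $\Theta\in K(G)$ forces $\phi^G_{C'}(\Theta)=0$, I deduce $\phi^D_{C'}(\tilde\Theta)=0$, and hence $\tilde\Theta\in K(D)$. The only non-routine step is the reformulation of the injectivity hypothesis as the no-fusion property of subgroups of $D$; the rest is routine Mackey bookkeeping.
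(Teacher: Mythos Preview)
Your proof is correct and takes a genuinely different route from the paper's. The paper argues indirectly: it first shows that the image of $\Ind\colon K(D)\to K(G)$ is a saturated sublattice (using that $K(D)\hookrightarrow B(D)\hookrightarrow B(G)$ both have saturated image), then compares ranks by invoking Artin's induction theorem to produce a $\Q$-basis of the space $\cX$ of $G$-relations supported in $D$ that visibly lies in the image. Your argument is instead constructive and avoids both Artin induction and the saturation step: you write down the obvious candidate $\tilde\Theta=\sum_i n_i[D/H_i]$ and verify directly, via the fixed-point characterisation of $K$ and a Mackey computation, that it already lies in $K(D)$. The crucial observation that injectivity of $B(D)\to B(G)$ is exactly the no-fusion condition lets you collapse the Mackey sum to a single term with positive multiplicity. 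Your approach is more elementary and yields the explicit preimage at once; the paper's rank-and-saturation argument is more structural and would adapt to situations where one only knows the relevant map is injective without having a clean pointwise no-fusion interpretation.
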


\begin{proof}
First, we claim that the image of $\Ind:K(D)\rightarrow K(G)$ is a saturated
sublattice 
\footnote{that is, $K(D)\rightarrow K(G)$ has torsion-free cokernel},
i.e. that if $\Theta$ is induced from a $D$-relation and
$R$ is a $G$-relation such that $\Theta = nR$ for some integer $n$, then $R$
is induced from a $D$-relation (and not just from an element of the Burnside
ring of $D$, which is trivially true).
Indeed, it is enough to show that the
image of the induction map $\Ind:K(D)\rightarrow B(G)$ is saturated.
But it is a composition of the two injections
$K(D)\to B(D)\>\smash{\stackrel{\scriptscriptstyle\Ind}{\rightarrow}}\>B(G)$
whose images are clearly saturated, and so it has saturated image.

The image $\cY$ of $\Ind:K(D)\rightarrow K(G)$ is obviously contained in the
space $\cX$
of $G$-relations $\sum_i n_i H_i$ for which $H_i\le D$ for all $i$.
So we only need to compare the ranks of the two spaces.

We have already remarked that the rank of $K(G)$ is equal to the number of
conjugacy classes of non-cyclic subgroups of $G$. A basis for
$K(G)\otimes \bQ$ is obtained by applying Artin's induction theorem to a representative
of each conjugacy class of non-cyclic subgroups of $G$. Hence, it is immediate
that a basis for $\cX\otimes \bQ$ is given by
the subset of this set corresponding to those conjugacy classes of non-cyclic
subgroups that have a representative lying in $D$.
But all these relations are clearly contained in $\cY\otimes \bQ$, so
$\cX\otimes \bQ\subseteq\cY\otimes \bQ\;$and we are done.
\end{proof}

\begin{proposition}\label{prop:normalbig}
Let $G$ be a finite group, and $N\normal G$ a normal subgroup 
of prime index that is either metabelian or supersolvable.
If $\Theta=\sum_i n_i H_i$ is a $G$-relation
with all $H_i\le N$, then $\Theta$ is induced from an $N$-relation.
\end{proposition}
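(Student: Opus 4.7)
The plan is to show that induction $\Ind_N^G\colon K(N)\to K(G)$ surjects onto $K(G)_{\le N}$, the sublattice of relations supported on subgroups of $N$. This surjectivity is exactly the statement of the proposition. Note that Proposition~\ref{prop:biggroupinj} cannot be invoked directly: $B(N)\to B(G)$ can fail to be injective (already for $N=V_4\normal D_8$, where two non-conjugate order-$2$ subgroups of $V_4$ become identified in $B(D_8)$), so the proof must go beyond a bare saturatedness argument on the Burnside level.

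First I would match the ranks. By Artin's induction theorem, the rank of $K(N)$ equals the number of $N$-conjugacy classes of non-cyclic subgroups of $N$, and the rank of $K(G)_{\le N}$ equals the number of $G$-conjugacy classes of non-cyclic subgroups of $N$. Each $G$-class in $N$ consists of either $1$ or $p=[G:N]$ distinct $N$-classes, so the source has precisely $(p-1)$ extra basis elements per orbit-size-$p$ non-cyclic $G$-class. A Mackey computation, based on $\Res_N^G H=\sum_{k=0}^{p-1}H^{g^k}$ for $H\le N$ and any $g\in G\smallsetminus N$, identifies this excess with the rank of the kernel of $\Ind_N^G$ restricted to $K(N)$. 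Hence the induced map $K(N)/\ker\Ind_N^G\to K(G)_{\le N}$ is a morphism of free abelian groups of the same rank.

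The substantive step is then to prove that the image of $\Ind_N^G$ is a saturated sublattice of $K(G)_{\le N}$. I would argue by induction on $|N|$, using that both metabelian and supersolvable groups admit convenient $G$-stable characteristic subgroups: the derived subgroup $N'$ in the metabelian case, and a minimal term of a $G$-refined chief series in the supersolvable case. Such an $M$ is characteristic in $N$, hence normal in $G$, and both $N/M$ and $M$ inherit the hypothesis on $N$ (the metabelian and supersolvable conditions are closed under subgroups and quotients). The inductive step then reduces saturatedness at $(G,N)$ to saturatedness at $(G/M,\,N/M)$---which is handled by the induction hypothesis combined with Lemma~\ref{lem:deflations} for lifting from quotients---together with the case where $\Theta$ is supported on subgroups containing $M$, where a deflation to $G/M$ again applies. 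The base case $N$ cyclic is trivial, since $K(N)=0$.

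The main obstacle is the second step: propagating saturatedness through the inductive reduction. The restriction to metabelian or supersolvable $N$---rather than merely soluble $N$---is essential precisely because only these classes admit characteristic subgroups $M$ whose quotient behaviour is tractable enough both for the lattice of subgroups of $N$ and for the conjugation action of $G/N$. Rationally, Artin's theorem makes the analogous statement immediate in full generality; the force of the proposition lies in the integral saturatedness, which is exactly what the hypothesis secures.
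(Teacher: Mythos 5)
You are right that Proposition~\ref{prop:biggroupinj} cannot be applied and that the content of the statement is integral rather than rational, and your rank count is essentially fine. The genuine gap is the saturatedness step, which is where all the work lies and which your sketch does not actually prove. The proposed induction on $|N|$ via a characteristic subgroup $M\le N$ does not reduce the problem to $(G/M,N/M)$: a relation $\Theta$ supported on subgroups of $N$ is generally supported on subgroups not containing $M$, and after deflating to $G/M$, applying the inductive hypothesis there, lifting back and subtracting, you are left with a $G$-relation supported in $N$ whose deflation to $G/M$ vanishes --- precisely the hard case, to which no inductive hypothesis applies and for which Lemma~\ref{lem:deflations} (an imprimitivity statement, not an inducedness statement) says nothing. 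A further warning sign is that your argument never uses the metabelian/supersolvable hypothesis in any substantive way: every soluble $N$ has a characteristic abelian subgroup (the last non-trivial term of the derived series) with subquotients inheriting solubility, so the same sketch would ``prove'' the statement for all soluble $N$, which the method of the paper cannot deliver and which should make you suspicious. Note also that the elements $H-H^g$ ($g\in G\setminus N$) spanning $\ker(\Ind_N^G)$ need not lie in $K(N)$, so the kernel-rank identification in your first step needs more care than a bare Mackey computation, though rationally the count comes out right via Artin generators.

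The paper's proof is genuinely different and shows where the hypothesis enters. One must show that $\Theta$, viewed in $B(N)$, can be corrected by a $\Z$-linear combination of terms $H^g-H$ so as to become an honest $N$-relation; equivalently, that a certain class is a coboundary for the action of $G/N\iso C_p$ on $M=B(N)/K(N)$, viewed inside $R_\Q(N)$. Clifford theory shows that $\Ind_N^G\tilde\Theta=0$ forces $\tilde\Theta$ to be killed by $1+T+\cdots+T^{p-1}$, i.e.\ $\Theta$ defines a $1$-cocycle, so everything reduces to $H^1(C_p,M)=0$. This vanishing is exactly where metabelian/supersolvable is used: by the theorem of Berz \cite{Berz,HT}, for such $N$ the image of $B(N)$ in $R_\Q(N)$ is spanned by elements $a_\phi\phi$ with $\phi$ running over the irreducible rational characters, and since $a_\phi=a_{\phi^T}$ this makes $M$ a direct sum of trivial and regular $\Z[C_p]$-modules, both with trivial $H^1$. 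If you want to salvage your approach, you would need a substitute for this cohomological input; the inductive reduction as you describe it does not supply one.
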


\begin{remark}
It is not true that $\sum_i n_i H_i$ is an $N$-relation, since the $H_i$ are
representatives of $G$-conjugacy classes of subgroups and they might represent
the ``wrong'' $N$-conjugacy classes.
For example, if $H_1$ and ${}^gH_1$ are not conjugate in $N$, then $H_1 - {}^gH_1$
will not be an $N$-relation in general, while it is the zero element in the
Burnside ring of $G$ and in particular a $G$-relation.
\end{remark}

\begin{proof}
Write $p$ for the index of $N$ in $G$, and
fix a generator $T$ of the quotient $G/N\cong C_p$.
Recall (see e.g. \cite{AW} \S8) that for a $C_p$-module $M$,
$$
  H^1(C_p,M) = \frac{\text{1-cocycles}}{\text{1-coboundaries}} = 
    \frac{\ker(1+T+\ldots+T^{p-1})}{\Im(1-T)}.
$$
Let $\Theta = \sum_i n_iH_i$ be a $G$-relation with $H_i\le N$ for all $i$;
we view it as an element of the Burnside ring of $N$.
Write $\tilde\Theta=\sum m_\rho \rho$ for its image 
in the rational representation ring $R_\Q(N)$, the sum taken over the 
irreducible%
\footnote
{Throughout the proof the word `irreducible' refers to 
a rational representation, irreducible over $\Q$.} 
representations of $N$.
Note that 
$\Ind_N^G\tilde\Theta=0$, since $\Theta$ is a $G$-relation.

We need to show that we can add to $\Theta$ a linear combination of terms of the
form ${}^gH-H$ for $H\le N, g\in G$
such that the resulting element of $B(N)$ is an $N$-relation.
In other words, we claim that $\Theta$ is a coboundary for the action
of $G/N$ on $M=B(N)/K(N)$; note that $G$ acts naturally on $B(N)$ and $K(N)$, 
with $N$ acting trivially.

First, observe that the operator $\Res^G_N\Ind_N^G$ on $R_\Q(N)$ is, by
definition of induction, equal to $1+T+\ldots+T^{p-1}$. Since $\Theta$ is a
$G$-relation, $\tilde{\Theta}$ is killed by $\Ind_N^G$, and therefore a fortiori
by $1+T+\ldots+T^{p-1}$. In other words $\tilde\Theta$ is a 1-cocycle under
the action of $C_p$ on the submodule $M$ of $R_\Q(N)$.

It remains to prove that
$$
  H^1(G/N,M)=0.
$$
Any irreducible representation of $N$ is either fixed by $G$ or has
orbit of size $p$. Thus, $R_\Q(N)$ as a $G/N$-module is a direct sum 
of trivial modules $\Z$ and of regular modules $\Z[C_p]$. 
The module $M$, viewed as the image of $B(N)$ in $R_\Q(N)$, is 
of finite index in $R_\Q(N)$ by Artin's induction theorem.
Since $N$ is either metabelian or supersolvable,
a theorem of Berz \cite{Berz, HT} says that
%
$M$ is spanned by elements 
of the form $a_\phi\phi$, as $\phi$ runs over the irreducible 
representations of $N$, for suitable $a_\phi\in\N$. 
Note that $a_\phi=a_{({}^T\phi)}$, because $M\le R_\Q(N)$ is a $C_p$-submodule.
It follows that 
$M$ is also a direct sum of trivial and of regular $C_p$-modules.
%
Now $H^1(C_p,\Z)=\Hom(C_p,\Z)=0$, and also $H^1(C_p,\Z[C_p])=0$ 
since $\Z[C_p]\iso\Hom_{C_p}(\Z[C_p],\Z)$ is co-induced. 
As $H^1$ is additive in direct sums, we get
that $H^1(C_p,M)=0$, as claimed.
\end{proof}

\begin{definition}
A group is called \emph{\qe{p}} if it has a normal cyclic subgroup whose
quotient is a $p$-group.
It is called \emph{quasi-elementary} if it is \qe{p} for some prime $p$.
\end{definition}

\begin{proposition}\label{prop:biggroup}
Let $G$ be a quasi-elementary group with a proper subgroup $D$.
If $\Theta = \sum_i n_iH_i$ is a $G$-relation such that $H_i\le D$ for all
$i$, then it is induced from some proper subgroup of $G$, 
and is in particular imprimitive.
\end{proposition}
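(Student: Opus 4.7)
\smallskip

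\noindent\textbf{Proof plan.} Write $G = C\rtimes P$ with $C$ cyclic normal of order coprime to $p$. The plan is to split into two cases according to whether $DC$ equals $G$, and to apply one of the two preceding propositions in each. If $DC\subsetneq G$, then $DC/C$ is a proper subgroup of the $p$-group $G/C$, hence contained in a subgroup of $G/C$ of index $p$; pulling back, its preimage $M\le G$ is normal in $G$ of prime index, contains $D$, and is itself quasi-elementary and so metabelian. Proposition~\ref{prop:normalbig} then applies and gives that $\Theta$ is induced from an $M$-relation, completing this case.

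Henceforth assume $DC = G$, and set $C_0 = D\cap C\subsetneq C$. Since $C_0\trianglelefteq G$, in $G/C_0 = (C/C_0)\rtimes P$ the subgroup $D/C_0$ is a complement to $C/C_0$; by Schur--Zassenhaus it is $G/C_0$-conjugate to $P$. Replacing $D$ by a suitable $G$-conjugate (and each $H_i$ by the corresponding representative of the same $G$-conjugacy class, which does not alter $\Theta$), we may assume $D = C_0\rtimes P$. The plan is now to apply Proposition~\ref{prop:biggroupinj}; this reduces the problem to checking that $B(D)\to B(G)$ is injective, which is equivalent to the assertion that $D$-conjugacy and $G$-conjugacy of subgroups of $D$ coincide.

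Using the decomposition $G = CD$, every $G$-conjugation is a composition of a $D$-conjugation and a $C$-conjugation, so the injectivity reduces to the following sublemma: if $H\le D$ and $c\in C$ are such that $cHc^{-1}\le D$, then $cHc^{-1}$ is $C_0$-conjugate to $H$. A direct calculation in the semidirect product shows that $cHc^{-1}\le D$ forces every $h\in H$ to fix the image $\bar c\in C/C_0$ under the $P$-action; hence $H$ and $cHc^{-1}$ have the same image $Q_\ast\le P$ in $D/C_0$, and trivially $H\cap C_0 = cHc^{-1}\cap C_0 =: A$. Thus $H/A$ and $cHc^{-1}/A$ are both complements to the normal coprime-order subgroup $C_0/A$ inside $(C_0/A)\rtimes Q_\ast\le D/A$, and Schur--Zassenhaus provides the required conjugating element in $C_0\le D$.

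The main obstacle is the injectivity claim in the second case; it ultimately comes down to two applications of Schur--Zassenhaus, both exploiting the coprimality of $|C|$ and $|P|$ intrinsic to the quasi-elementary setting. Case 1 is a cheap consequence of Proposition~\ref{prop:normalbig} alone.
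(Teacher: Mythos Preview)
Your proof is correct and follows essentially the same two-case strategy as the paper (apply Proposition~\ref{prop:normalbig} when $D$ lies under a normal index-$p$ subgroup, and Proposition~\ref{prop:biggroupinj} otherwise), with one minor slip: quasi-elementary groups are supersolvable but not in general metabelian (take $C=\{1\}$ and $P$ any $p$-group of derived length $\ge 3$); since Proposition~\ref{prop:normalbig} accepts either hypothesis, this does not affect the argument. The main difference lies in the injectivity step of Case~2: the paper first reduces to maximal $D$ (so $[C:C_0]$ is a prime $l$) and then gives a bare-hands argument, showing that if $H\le D$ and $H^c\le D$ with $c\in C_l$ of full order, then some commutator $h^ch^{-1}$ has order $l$ and hence cannot lie in $D$. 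You instead keep $C_0$ arbitrary and run Schur--Zassenhaus twice: once to normalise $D$ to $C_0\rtimes P$, and once (inside $(C_0/A)\rtimes Q_*$) to conjugate $H/A$ to $H^c/A$ by an element of $C_0/A$. Your route is a bit more conceptual and avoids the preliminary reduction to maximal subgroups; the paper's is more elementary and self-contained.
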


\begin{proof}
Write $G=C\rtimes P$, with $P$ a $p$-group and $C$ cyclic of order prime to~$p$.

It suffices to prove the proposition for maximal subgroups $D$ of $G$. Every
maximal subgroup of $G$ is either conjugate to $D=C\rtimes S$ with $S\normal P$
of index $p$, or to $D=U\rtimes P$ where $U$ is a maximal subgroup of $C$.
In the former case, $D\normal G$ is of prime index and is quasi-elementary
and therefore supersolvable, 
so the result follows from Proposition \ref{prop:normalbig}.
Assume that we are in the latter case.
We will show that the map $B(D)\rightarrow B(G)$ is injective,
and the claim will follow from Proposition \ref{prop:biggroupinj}.

In general, the kernel of the induction map $B(D)\rightarrow B(G)$ is generated
by elements of the form $H-{}^gH$ with $H$, ${}^gH\le D$.
We therefore have to verify that two such $H, {}^gH\le  
D=U\rtimes P$ are necessarily $D$-conjugate.

As $U\normal C$ is maximal, $[C:U]=l$ and $G=C_{l^k}D$ for some prime $l$ 
and $k\ge 1$. Write $g=cd, c\in C_{l^k}, d\in D$, so ${}^gH={}^{cd}H$. Replacing
$H$ by ${}^dH$ (which is still a subgroup of $D$), we may assume that
$g=c\in C_{l^k}$. If the order of $c$ is less than $l^k$, then $c\in D$, and we
are done. So assume that $c$ has order $l^k$. If $H$ commutes with $C_{l^k}$,
then $H={}^cH$, and the claim is trivial. Otherwise, there exists $h\in H$
(without loss of generality of order coprime to $l$) for which $hch^{-1}=c^i$
for some $i\not\equiv 1\pmod l$. But then ${}^c hh^{-1}=chc^{-1}h^{-1} = c^{1-i}$
still has order $l^k$, and therefore cannot lie in $D$, contradicting the
assumption that $H,{}^cH\le D$.
\end{proof}

\begin{corollary}\label{cor:smallgpbiggp}
Let $G$ be a quasi-elementary group and let $\{1\}\neq N_j\normal G$, $N_j\le D\lneq G$,
$j=1,\ldots,s$. If $\Theta= \sum_i n_i H_i$ is a
$G$-relation with the property that for each $H_i$
either $N_j\le H_i$ for some $j$ or $H_i\le D$, then $\Theta$ is imprimitive.
\end{corollary}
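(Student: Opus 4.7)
The plan is to combine Lemma \ref{lem:deflations} with Proposition \ref{prop:biggroup}: first I will kill, modulo lifted relations, all terms $H_i$ that contain some $N_j$ by iterated deflation, and then apply Proposition \ref{prop:biggroup} to what remains. Concretely, set $\Theta^{(0)}=\Theta$ and $\Theta^{(j)}=\Theta^{(j-1)}-N_j\Theta^{(j-1)}$ for $j=1,\ldots,s$. For any $G$-relation $\Psi$, the element $N_j\Psi$ is the inflation to $G$ of a $G/N_j$-relation, and hence is lifted from a proper quotient; therefore $\Theta-\Theta^{(s)}=\sum_{j=1}^{s}N_j\Theta^{(j-1)}$ is a sum of lifted relations, and so imprimitive. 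It thus suffices to prove that $\Theta^{(s)}$ itself is imprimitive.

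The key observation is that the Burnside-ring operators $H\mapsto N_jH$ pairwise commute, since $N_jN_kH=\langle N_j,N_k,H\rangle$ is symmetric in $j,k$, and satisfy $N_jH=H$ whenever $H\ge N_j$. Consequently $\Theta^{(s)}=\prod_{j=1}^{s}(1-N_j)\Theta$, and the idempotent $(1-N_{j_0})$ annihilates any $H_i$ containing $N_{j_0}$. The only $H_i$ making a nonzero contribution to $\Theta^{(s)}$ are therefore those that contain no $N_j$; by hypothesis these satisfy $H_i\le D$, and every subgroup $N_TH_i=\langle N_T,H_i\rangle$ with $T\subseteq\{1,\ldots,s\}$ appearing in the expansion lies in $D$ as well, since each $N_j\le D$. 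Thus $\Theta^{(s)}$ is a $G$-relation with all terms in the proper subgroup $D$ of the quasi-elementary group $G$, and Proposition \ref{prop:biggroup} immediately gives its imprimitivity.

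The only point requiring real attention is the commutativity and idempotence of the operators $(1-N_j)$ on $B(G)$, together with the claim that $N_j\Psi$ is imprimitive for every $G$-relation $\Psi$; both are routine verifications once one spells out the deflation--inflation factorisation of $\Psi\mapsto N_j\Psi$. I therefore anticipate no serious obstacle beyond the bookkeeping above.
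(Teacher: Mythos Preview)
Your proof is correct and follows essentially the same approach as the paper: iteratively subtract the projections $N_j\Theta^{(j-1)}$ (which are lifted from proper quotients, hence imprimitive), and then apply Proposition~\ref{prop:biggroup} to the remaining relation whose terms all lie in $D$. Your additional remarks on the commutativity and idempotence of the operators $(1-N_j)$ make explicit what the paper leaves to the reader, but the argument is the same.
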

\begin{proof}
Set $\Theta_0\!=\!\Theta$ and define inductively
$\Theta_j\!=\!\Theta_{j-1}\!-\!N_j\Theta_{j-1}$ for $1\!\le\!j\!\le\!s$.
Then $\Theta_s$ consists only of subgroups of $D$, so it is imprimitive
by Proposition~\ref{prop:biggroup}. Because the projections $N_j\Theta_{j-1}$
are lifted from $G/N_j$, they are also imprimitive.
\end{proof}

\begin{lemma}\label{lem:ideal}
Let $G$ be a finite group and $R$ any $G$-relation, possibly 0.
Then the $\bZ$-span of all imprimitive relations and $R$
is an ideal in the Burnside ring of $G$.
\end{lemma}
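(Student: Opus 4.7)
The plan is to reduce to checking closure under multiplication by each transitive $G$-set $[G/H]$, and then exploit the identity $\Theta\cdot H=\Ind_H^G\Res_H^G\Theta$ from the Remark in \S\ref{s:first}. Since $B(G)$ is generated as a $\Z$-module by the classes $[G/H]$ and the $\Z$-span of imprimitive relations together with $R$ is already closed under addition by construction, it would suffice to show that for every subgroup $H\le G$, both $\Theta\cdot H$ (for $\Theta$ imprimitive) and $R\cdot H$ lie in this span.

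The main case split would be on whether $H=G$ or $H\lneq G$. If $H=G$, then $[G/H]$ is the multiplicative identity of $B(G)$, so $\Theta\cdot H=\Theta$ and $R\cdot H=R$ trivially lie in the span. If $H\lneq G$ is a proper subgroup, the Remark identity expresses $\Theta\cdot H$ and $R\cdot H$ as $\Ind_H^G$ applied to an $H$-relation (restriction of a $G$-relation is an $H$-relation by the Mackey formula recalled in \S\ref{s:first}). By the very definition of imprimitivity, anything of the form $\Ind_H^G$ of a relation on the proper subgroup $H$ is imprimitive, so both products are imprimitive and hence lie in the span. Linearity in $\Theta$ then handles a general imprimitive relation, since this is by definition a $\Z$-linear combination of relations induced from proper subgroups or lifted from proper quotients, and each summand, when multiplied by $[G/H]$, is imprimitive by the argument just given.

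There is no substantive obstacle: the lemma is essentially a one-line consequence of the Remark formula, which itself is just Frobenius reciprocity at the level of $G$-sets. A pleasant byproduct is that the imprimitive relations alone already form an ideal in $B(G)$, and adjoining $R$ merely enlarges this ideal by a single principal $B(G)$-submodule.
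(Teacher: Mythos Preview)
Your argument is correct and is essentially identical to the paper's own proof: both split on $H=G$ versus $H\lneq G$, use the identity $\Theta\cdot H=\Ind_H^G\Res_H^G\Theta$, and observe that inductions from proper subgroups are imprimitive by definition. The paper's version is simply terser (two sentences), omitting the explicit reduction to generators and the linearity remark, which are routine.
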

\begin{proof}
If $H\neq G$, then $H\cdot \Theta = \Ind^G\Res_H\Theta$ is
imprimitive for any relation~$\Theta$.
If, on the other hand, $H=G$, then $H\cdot \Theta = \Theta$.
\end{proof}

\begin{corollary}\label{cor:alaDeligne}
Let $G$ be a finite group and suppose that there exists an imprimitive
$G$-relation $R$ in which
$G$ enters with coefficient 1. Then $G$ has no primitive relations.
\end{corollary}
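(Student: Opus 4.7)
The plan is a direct application of Lemma \ref{lem:ideal} combined with the observation that the subgroup $G$ itself is the multiplicative identity of $B(G)$, since it corresponds to the one-point $G$-set $G/G$. Because $R$ is imprimitive by hypothesis, the ideal described in Lemma \ref{lem:ideal} is simply the $\mathbb{Z}$-submodule of imprimitive relations; so the key consequence I would extract is that the imprimitive relations form an ideal of $B(G)$ containing $R$.

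Given an arbitrary $G$-relation $\Theta$, I would write $R = G + \sum_{H \ne G} a_H H$, using the hypothesis that $G$ enters $R$ with coefficient $1$, and then compute
$$
R \cdot \Theta \;=\; \Theta \;+\; \sum_{H \ne G} a_H \,(H \cdot \Theta).
$$
By the remark in \S\ref{s:first}, each factor $H \cdot \Theta$ with $H \ne G$ equals $\Ind^G \Res_H \Theta$, which is induced from a proper subgroup and hence imprimitive. On the other hand, $R$ lies in the ideal of imprimitive relations, so $R \cdot \Theta$ does too. Rearranging,
$$
\Theta \;=\; R \cdot \Theta \;-\; \sum_{H \ne G} a_H\, (H \cdot \Theta),
$$
which displays $\Theta$ as a $\mathbb{Z}$-linear combination of imprimitive relations, hence imprimitive.

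There is essentially no obstacle: once Lemma \ref{lem:ideal} is in place, the argument is a two-line computation in the Burnside ring, in the spirit of Deligne's method (hence the label ``\`a la Deligne''). The only point worth emphasising is the role of the hypothesis $n_G = 1$: it is precisely what lets us isolate $\Theta$ on one side of the identity above, so that every remaining term is a product involving a \emph{proper} subgroup of $G$ and is therefore automatically imprimitive.
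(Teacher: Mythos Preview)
Your proof is correct and is essentially identical to the paper's own argument: both use Lemma~\ref{lem:ideal} to get that $R\cdot\Theta$ is imprimitive, expand $R\cdot\Theta=\Theta+\sum_{H\ne G}a_H\,\Ind^G\Res_H\Theta$, and conclude by noting that each $\Ind^G\Res_H\Theta$ with $H\ne G$ is imprimitive.
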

\begin{proof}
Write $R\!=\!G-\sum_{H\lneq G} n_H H$. Then $R\cdot \Theta = \Theta-\sum_H n_H
\Ind^G\Res_H\Theta$ for any relation $\Theta$.
By Lemma \ref{lem:ideal}, $R\cdot \Theta$ is imprimitive, and 
$\sum_H n_H \Ind^G\Res_H\Theta$ is also a sum of imprimitive relations.
\end{proof}

\section{A characterisation in terms of quotients}

The main result of this section, Theorem \ref{thm:notquasiel},
gives a characterisation of $\prim(G)$ in terms
of the existence of quasi-elementary quotients of $G$.
First, recall Solomon's induction theorem and a statement complementary to it:

\begin{theorem}[Solomon's induction theorem]\label{thm:Solomon}
Let $G$ be a finite group. There exists a Brauer relation of the form
$G - \sum_H n_H H$ where the sum runs over quasi-elementary subgroups of $G$
and $n_H$ are integers.
\end{theorem}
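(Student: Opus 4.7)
The plan is to prove Solomon's theorem in two stages: a $p$-local version for each prime $p$ dividing $|G|$, followed by a global assembly via B\'ezout.

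For the $p$-local step, I aim to produce, for each prime $p\mid|G|$, an integer $n_p$ coprime to $p$ together with integers $n_{H,p}$ such that
\[
  n_p\,G \;-\; \sum_H n_{H,p}\,H \;\in\; K(G),
\]
the sum taken over \qe{p} subgroups $H\le G$. I would establish this by induction on $|G|$. The base case is when $G$ is itself \qe{p}: take $n_p=1$ and the trivial relation $G-G=0$. For the inductive step, pick a Sylow $p$-subgroup $P\le G$; the permutation character $\Ind_P^G\triv_P$ has degree $[G:P]$ coprime to $p$, and contains $\triv_G$ with multiplicity one by Frobenius reciprocity. Applying the inductive hypothesis to proper overgroups of $P$ (together with subgroups coming from a Mackey decomposition), and combining with Artin's theorem to clear intermediate denominators, one writes $\Ind_P^G\triv_P-\triv_G$ as a $\Z$-combination of $\Ind_H^G\triv_H$ with $H$ a proper \qe{p} subgroup, up to multiplication by some $p'$-integer. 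Rearranging produces the desired $p$-local relation.

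For the global assembly, I use that the integers $n_p$ obtained above satisfy $\gcd\{n_p:p\mid|G|\}=1$, since each $n_p$ is coprime to $p$. By B\'ezout, choose integers $c_p$ with $\sum_p c_p n_p=1$ and sum $c_p$ times the $p$-local relation over all primes $p$ dividing $|G|$; the result is a Brauer relation of the form $G-\sum_H n_H H\in K(G)$ with each $H$ quasi-elementary, exactly as claimed.

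The delicate part is the $p$-local step: one must control the integer denominators arising in the induction so as to ensure that the coefficient of $G$ remains coprime to $p$, which is the heart of Solomon's original argument. The reason \qe{p} subgroups suffice here, whereas Artin's theorem needs a universal denominator $|G|$ with merely cyclic subgroups, is that a \qe{p} subgroup already carries a full Sylow $p$-subgroup of itself as a normal complement to its cyclic part; this extra $p$-data is precisely what cancels the $p$-primary part of Artin's denominator.
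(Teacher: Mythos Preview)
The paper does not actually prove Theorem~\ref{thm:Solomon}: it is stated as a classical result (Solomon's induction theorem) and used as a black box. So there is no proof in the paper to compare against; the question is whether your sketch stands on its own.

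Your global architecture is correct and standard: establish for each prime $p\mid|G|$ a relation $n_p\,G-\sum_H n_{H,p}H\in K(G)$ with $p\nmid n_p$ and $H$ ranging over \qe{p} subgroups, then combine via B\'ezout. The second step is fine.

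The gap is in the $p$-local step. You propose induction on $|G|$ using a Sylow $P$, and then say that ``applying the inductive hypothesis to proper overgroups of $P$ (together with subgroups coming from a Mackey decomposition)'' lets you write $\Ind_P^G\triv-\triv_G$ in the required form up to a $p'$-denominator. This is where the argument is not a proof: it is unclear which overgroups of $P$ you mean, why they should appear, and how Mackey or Artin would produce \qe{p} subgroups with the correct $p'$-integrality. You yourself flag this as ``the heart of Solomon's original argument'' and then do not supply it. The usual proof of the $p$-local statement does \emph{not} run by this kind of induction; instead one proves directly the key lemma that for every $x\in G$ there is a \qe{p} subgroup $H$ with $(\Ind_H^G\triv)(x)\not\equiv 0\pmod p$ (take $C=\langle x_{p'}\rangle$, work in $N_G(C)$, and choose $H/C$ a Sylow $p$-subgroup of $N_G(C)/C$), and then a short linear-algebra argument over $\F_p$ gives the relation. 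Without this lemma or an equivalent, your inductive step does not close.

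A minor point: in your last paragraph you describe a \qe{p} group as carrying ``a full Sylow $p$-subgroup of itself as a normal complement to its cyclic part''; it is the cyclic $p'$-part that is normal, with $p$-group quotient, not the other way around.
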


\begin{proof}
See \cite{Sol} Thm. 1 with
${\mathbf K}=\Q$ or \cite{Isa} Thm. 8.10.
\end{proof}

\begin{theorem}\label{thm:Tims}
Let $G$ be a non-cyclic \qe{p} group.
Then there exists a relation in which $G$ enters with
coefficient $p$. Moreover, in any $G$-relation
the coefficient of $G$ is divisible by $p$.
\end{theorem}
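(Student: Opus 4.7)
My plan is to prove both statements by evaluating the character of a Brauer relation at one cleverly chosen element of $G$. Write $G=C\rtimes P$ with $P$ a Sylow $p$-subgroup and $C$ cyclic of order coprime to $p$.

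For the divisibility, I take $g$ to be a generator of $C$ and compute the mark $|(G/H)^g|$ for each subgroup $H\le G$. Since $g^G$ consists entirely of generators of $C$, the intersection $g^G\cap H$ is empty whenever $C\not\le H$, giving $|(G/H)^g|=0$. When $C\le H$, necessarily $H=C\rtimes P'$ for a unique $P'\le P$, and a direct computation based on $|(G/H)^g|=|g^G\cap H|\cdot|C_G(g)|/|H|$ together with $C_G(g)=C\times\ker(P\to\Aut C)$ simplifies to $|(G/H)^g|=[P:P']$. The vanishing of the character of $\Q[\Theta]$ at $g$, for $\Theta=\sum_H n_H H\in K(G)$, then reads
$$
0 = \sum_{P'\le P} n_{C\rtimes P'}\,[P:P'] = n_G + \sum_{P'<P} n_{C\rtimes P'}\,[P:P'],
$$
forcing $p\mid n_G$, since every $[P:P']$ with $P'<P$ is a positive power of $p$.

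For existence I split into two cases. If $P$ is non-cyclic, then $G$ has a quotient isomorphic to $(C_p)^2$ via $G\to G/C=P\to P/\Phi(P)\to(C_p)^2$, and inflating the standard relation of Example \ref{ex:cpcp}, whose top coefficient is $p$, yields the desired $G$-relation. If $P$ is cyclic (and necessarily non-trivial, since $G$ is non-cyclic), then $P$ acts non-trivially on $C$, and $K=\ker(P\to\Aut C)$ is centralised by $C$ and normal in $P$, hence normal in $G$; replacing $G$ by $G/K$ I may assume $P$ acts faithfully on $C$. The Sylow decomposition of $C$ together with cyclicity of $P$ forces $P$ to act faithfully on some primary component $C_{l^{a}}$, and since the kernel of $\Aut C_{l^{a}}\to\Aut C_l$ is an $l$-group while $P$ is a $p$-group with $p\ne l$, the action of $P$ on $C_l$ is faithful as well. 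The relation of Example \ref{exer:G20} applied to $C_l\rtimes P$ with $\tilde H\le P$ of index $p$ has top coefficient $p$, and lifts through $G\to C_l\rtimes P$ to a $G$-relation with $n_G=p$.

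The main obstacle is the divisibility: once one identifies a generator of $C$ as the correct test element the argument almost writes itself, because the mark table collapses to contributions from subgroups of the form $C\rtimes P'$ only, and the $p$-power nature of $[P:P']$ for $P'<P$ takes care of the rest. The existence part is then bookkeeping, either lifting the $p$-group case through $G\to P$ or lifting the explicit $C_l\rtimes P$ relation recorded in Example \ref{exer:G20}.
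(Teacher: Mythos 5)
Your argument is correct. Note, however, that the paper does not prove Theorem \ref{thm:Tims} at all: it is quoted from the reference \cite{solomon}, so there is no internal proof to compare against. Your proof is a clean, self-contained substitute. The divisibility step — evaluating the (vanishing) virtual permutation character of a relation at a generator $g$ of $C$, observing that the fixed-point number $|(G/H)^g|$ is $0$ unless $C\le H$ and equals $[G:H]=[P:P']$ otherwise — is sound: all conjugates of $g$ generate $C$, every $H\supseteq C$ is $C\rtimes(H\cap P)$, and every index $[G:H]$ with $C\le H\lneq G$ is a positive power of $p$, so $n_G\equiv 0\pmod p$. (Incidentally, once $g^G\subseteq C\subseteq H$ you get $|(G/H)^g|=|G|/|H|$ directly, without needing $C_G(g)$.) The existence step matches the paper's own toolkit: inflating Example \ref{ex:cpcp} through $G\to P/\Phi(P)\to C_p\times C_p$ when $P$ is non-cyclic, and Example \ref{exer:G20} otherwise; your reduction via $K=\ker(P\to\Aut C)\normal G$ and the total ordering of subgroups of a cyclic $p$-group (forcing faithfulness on some primary component, hence on its order-$l$ layer since $\ker(\Aut C_{l^a}\to\Aut C_l)$ is an $l$-group) is fine. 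One cosmetic slip: $C_l\rtimes P$ as you set it up is a subgroup of $G$, not a quotient; to inflate you should pass to $G/C''$ where $C''\le C$ is the unique subgroup of index $l$, which is isomorphic to $C_l\rtimes P$ with the same (faithful) $P$-action, since the actions of $P$ on the order-$l$ subgroup and the order-$l$ quotient of $C$ have the same kernel. With that one-line fix the proof is complete, and it is arguably more elementary than invoking character inner products with induced faithful characters, which is the flavour of the computations the paper itself uses in related statements such as Lemma \ref{lem:faithfulchar}.
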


\begin{proof}
See \cite{solomon}.
\end{proof}

\begin{theorem}\label{thm:notquasiel}
Let $G$ be a non-quasi-elementary group.
\begin{enumerate}
\item $\Prim(G)\iso \Z$ if all proper quotients of $G$ are cyclic.
\item $\Prim(G)\iso \Z/p\Z$ if all proper quotients of $G$ are \qe{p}
for the same prime $p$, and at least one of them is not cyclic.
\item $\Prim(G)=0$ otherwise.
\end{enumerate}
In cases (1) and (2), $\Prim(G)$ is generated by any relation in which
$G$ has coefficient~1.
\end{theorem}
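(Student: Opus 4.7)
The plan is to use Solomon's induction theorem to exhibit a Brauer relation $R$ in which $G$ enters with coefficient $1$, show that $R$ generates $\Prim(G)$, and then compute its order in $\Prim(G)$ by analysing which $G$-coefficients can arise in imprimitive relations. Write $I(G) \subseteq K(G)$ for the subgroup of imprimitive relations, so that $\Prim(G) = K(G)/I(G)$.

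First I would prove the key congruence: for every $\Theta = \sum_H m_H H \in K(G)$ one has $\Theta \equiv m_G R \pmod{I(G)}$. Write $R = G - \sum_Q n_Q Q$, with $Q$ running over proper quasi-elementary subgroups, as in Theorem \ref{thm:Solomon}, and compute $R \cdot \Theta$ in the Burnside ring in two ways using the Remark of \S\ref{s:first}. Expanding $R$ gives $R \cdot \Theta = \Theta - \sum_Q n_Q \Ind^G \Res_Q \Theta$, each summand being induced from a proper subgroup, so $R \cdot \Theta \equiv \Theta \pmod{I(G)}$. Expanding $\Theta$ instead gives $R \cdot \Theta = \sum_H m_H \Ind^G \Res_H R$; for proper $H$ every term is imprimitive, and the $H = G$ term is $m_G R$. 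Comparing the two proves the congruence and shows that $\Prim(G)$ is cyclic, generated by $[R]$.

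Second, the order of $[R]$ is the positive generator of the subgroup $A = \{n \in \Z : nR \in I(G)\} \subseteq \Z$, which by the congruence coincides with the set of $G$-coefficients appearing in imprimitive relations. Inductions from proper subgroups contribute $0$. A relation lifted from a proper quotient $G/N$ contributes the coefficient of $G/N$ in the underlying $G/N$-relation: cyclic $G/N$ contributes $0$ (no non-zero relations); non-cyclic \qe{p} quotients contribute exactly $p\Z$ by Theorem \ref{thm:Tims}; and non-quasi-elementary quotients contribute $\Z$ via Solomon applied to $G/N$. Case (1) therefore gives $A = 0$ and $\Prim(G) \cong \Z$; case (2) gives $A = p\Z$ and $\Prim(G) \cong \Z/p\Z$; in case (3), either a proper non-quasi-elementary quotient exists (so $A = \Z$) or there are non-cyclic \qe{p} and \qe{q} quotients for distinct primes $p \neq q$ (so $A \supseteq p\Z + q\Z = \Z$ by Bezout), giving $\Prim(G) = 0$. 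The last assertion of the theorem is immediate: any relation with $G$-coefficient $1$ is congruent to $R$ modulo $I(G)$ by the first step, hence generates $\Prim(G)$ in cases (1) and (2).

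I do not anticipate a serious obstacle: Solomon's theorem, Theorem \ref{thm:Tims} and Lemma \ref{lem:ideal} (which makes the computations with $R \cdot \Theta$ legitimate) do the real work, and once the congruence $\Theta \equiv m_G R \pmod{I(G)}$ is established, the rest is a clean case analysis based on the structure of proper quotients.
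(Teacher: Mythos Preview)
Your proposal is correct and follows essentially the same approach as the paper. Both arguments use Solomon's relation $R$ with $G$-coefficient $1$, show via the multiplication $R\cdot\Theta$ in the Burnside ring that $[R]$ generates $\Prim(G)$, and then determine the order of $[R]$ by analysing which $G$-coefficients can arise in relations lifted from proper quotients (using Theorem~\ref{thm:Tims} and Solomon for quotients); your packaging via the congruence $\Theta\equiv m_G R\pmod{I(G)}$ is a mild reorganisation of the paper's decomposition $\Theta=R\cdot\Theta+(\Theta-R\cdot\Theta)$, but the substance is identical.
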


\begin{proof}
By Solomon's induction theorem, $G$ has a relation of the form
$R= G - \sum_{H\ne G}n_H H$, and we claim that
$R$ generates $\prim(G)$ in all cases.
By Lemma \ref{lem:ideal}, the span $I$ of the set of imprimitive
relations and of $R$ is an ideal in $B(G)$. To show that $K(G)\subset I$,
let $\Theta$ be any relation.
Then $\Theta = R\cdot\Theta + (\Theta-R\cdot \Theta)$ and
$R\cdot\Theta\in I$. Also,
$$
  \Theta-R\cdot \Theta = \sum_{H\neq G} n_H(\Theta\cdot H)
$$
is imprimitive and therefore also in $I$. So $\Theta\in I$, as claimed.

It remains to determine the smallest integer $n>0$ such that $G$ has
an imprimitive relation of the form $\Theta=nG-\sum_{H\ne G}m_H H$.
Then $\prim(G)\iso\Z/n\Z$ (and $\Z$ if there is no such $n$).
Clearly $G$ does not enter the relations that are induced from proper
subgroups, so such a $\Theta$ must be a linear combination of relations
lifted from proper quotients.

(1) If all proper quotients of $G$ are cyclic, there are no such relations.

(2) If all proper quotients are \qe{p}, then $n$ is a multiple of $p$
by Theorem \ref{thm:Tims}, and there is a relation with $n=p$ by the same
theorem if one of these quotients is not cyclic.

(3) Otherwise, either
\begin{enumerate}
\item[(a)]
some proper quotient $G/N$ is not quasi-elementary,
in which case we apply Solomon's induction to $G/N$ and lift
the resulting relation to $G$; or
\item[(b)] $G$ has two proper non-cyclic
quotients $G/N_1, G/N_2$ which are $p$- and \qe{q} with $p\ne q$,
in which case we take a linear combination of the two lifted relations
$pG+...$ and $qG+...$.
\end{enumerate}
In both cases, there is an imprimitive relation
with $n\!=\!1$, so $\Prim(G)\!=\!0$.
\end{proof}

\begin{corollary}\label{cor:quasielquotients}
If a finite group $G$ has a primitive relation, then there is a prime $p$
such that every proper quotient of $G$ is $p$-quasi-elementary.
\end{corollary}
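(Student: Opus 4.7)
The plan is to split into two cases according to whether $G$ itself is quasi-elementary, and in each case read off the conclusion from what has already been proved.

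First I would dispose of the case where $G$ is itself $p$-quasi-elementary for some $p$. Write $G = C \rtimes P$ with $C$ cyclic of order coprime to $p$ and $P$ a $p$-group. For any $N \normal G$, the image of $C$ in $G/N$ is a normal cyclic subgroup of order dividing $|C|$ (in particular coprime to $p$), and the quotient of $G/N$ by this image is a quotient of $P$, hence a $p$-group. So $G/N$ is $p$-quasi-elementary, and the corollary holds with this $p$ regardless of whether $G$ has a primitive relation.

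Next I would treat the case where $G$ is non-quasi-elementary. Here Theorem~\ref{thm:notquasiel} applies directly: since $\Prim(G) \neq 0$ by hypothesis, we must be in case (1) or case (2) of that theorem, not case (3). In case (1) all proper quotients of $G$ are cyclic, hence $p$-quasi-elementary for every prime $p$. In case (2) all proper quotients of $G$ are $p$-quasi-elementary for one specific prime $p$. Either way the desired prime exists.

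There is no real obstacle here; the only small point to check is the stability of the class of $p$-quasi-elementary groups under taking quotients, which is immediate from the definition. The substance of the result is already contained in Theorem~\ref{thm:notquasiel}, and the corollary is essentially a reformulation of its case (3) as an obstruction to the existence of primitive relations.
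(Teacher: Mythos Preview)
Your proof is correct and follows exactly the same two-case split as the paper's own proof: first observing that quotients of a $p$-quasi-elementary group are $p$-quasi-elementary, then invoking Theorem~\ref{thm:notquasiel} in the non-quasi-elementary case. You have simply spelled out in more detail the closure of $p$-quasi-elementary groups under quotients and the case analysis (1)/(2) of the theorem, whereas the paper states both of these in a single line.
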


\begin{proof}
If $G$ itself is $p$-quasi-elementary, then so are all its quotients, and
there is nothing to prove. Otherwise, apply the theorem.
\end{proof}

\begin{corollary}
\label{cor:ses}
Let $G$ be a finite group that has a primitive relation.
Then $G$ is an extension of the form
\begin{eqnarray}\label{ses}
  1 \rightarrow S^d \rightarrow G \rightarrow Q\rightarrow 1, \qquad\qquad d\ge 1
\end{eqnarray}
with $S$ a simple group and $Q$ \qe{p}.
Moreover, if $S$ is not cyclic (equivalently if $G$
is not soluble), then the canonical map $Q\rightarrow \Out(S^d)$
is injective and $S^d$ has no proper non-trivial subgroups
that are normal in $G$.
In this case, $\prim(G)\cong \bZ$ if $Q$ is cyclic and
$\prim(G)\cong \bZ/p\bZ$ otherwise.
\end{corollary}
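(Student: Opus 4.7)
The plan is to combine Corollary \ref{cor:quasielquotients} with the classical decomposition of a minimal normal subgroup of a finite group. Let $N$ be a minimal normal subgroup of $G$; since $N$ is characteristically simple, $N\cong S^d$ for some simple group $S$ and integer $d\ge 1$. Setting $Q=G/N$, Corollary \ref{cor:quasielquotients} yields a prime $p$ such that $Q$ is $p$-quasi-elementary (trivially so if $N=G$). If $S$ is cyclic then $S^d$ is abelian and $G$ is an extension of a soluble group by a soluble group, hence soluble; this settles the first assertion.

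Assume now that $S$ is non-abelian simple, so $G$ is non-soluble. The main step is to prove $C_G(S^d)=\{1\}$. Suppose for contradiction that $C_G(S^d)\ne\{1\}$, and let $M$ be a minimal normal subgroup of $G$ contained in $C_G(S^d)$. Since $Z(S^d)=1$ we have $M\cap S^d=\{1\}$, so the diagonal map $G\to G/S^d\times G/M$ has trivial kernel and is therefore injective. But both target factors are $p$-quasi-elementary (by Corollary \ref{cor:quasielquotients}) and hence soluble, forcing $G$ to be soluble, a contradiction. Consequently the conjugation map $G\to\Aut(S^d)$ has trivial kernel and is injective, and passing to $\Out(S^d)$ yields the required injection $Q\hookrightarrow\Out(S^d)$.

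The same argument shows that $S^d$ is the \emph{unique} minimal normal subgroup of $G$: any other minimal normal $M\ne S^d$ would satisfy $M\cap S^d=\{1\}$ by minimality of both, whence $M\le C_G(S^d)=\{1\}$, a contradiction. Thus every non-trivial normal subgroup of $G$ contains $S^d$; in particular any proper $G$-normal subgroup of $S^d$ must be trivial, and every proper quotient of $G$ factors through $Q$.

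Finally, to compute $\Prim(G)$ in the non-soluble case, we invoke Theorem \ref{thm:notquasiel}: $G$ is non-quasi-elementary (being non-soluble) and all its proper quotients are quotients of the $p$-quasi-elementary group $Q$. If $Q$ is cyclic, then all proper quotients of $G$ are cyclic and case (1) of the theorem gives $\Prim(G)\cong\bZ$; if $Q$ is non-cyclic, then $Q$ itself is a non-cyclic $p$-quasi-elementary proper quotient and case (2) gives $\Prim(G)\cong\bZ/p\bZ$. The main obstacle in the argument is the subdirect-product step ruling out a non-trivial centraliser $C_G(S^d)$; once that is in place, everything else is a formal consequence of the preceding results.
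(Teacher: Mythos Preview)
Your proof is correct and follows essentially the same strategy as the paper: take a minimal normal subgroup $S^d$, invoke Corollary~\ref{cor:quasielquotients} to make $Q$ quasi-elementary, show the centraliser $C_G(S^d)$ is trivial, and read off $\Prim(G)$ from Theorem~\ref{thm:notquasiel}.

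The one tactical difference is in how you kill $C_G(S^d)$. You pick a minimal normal $M\le C_G(S^d)$ and embed $G$ subdirectly into $G/S^d\times G/M$, forcing $G$ soluble. The paper is more direct: since $K=C_G(S^d)$ meets $S^d$ trivially, $S^d$ injects into $G/K$, so $G/K$ is a proper non-quasi-elementary quotient, contradicting Corollary~\ref{cor:quasielquotients}. The same one-line argument handles the ``no proper $G$-normal subgroup of $S^d$'' claim (any such $N$ gives a non-quasi-elementary $G/N$, since $S^d/N$ is a non-trivial perfect subgroup), whereas you route this through uniqueness of the minimal normal subgroup. Both approaches are fine; the paper's is a bit shorter, while yours makes the ``every proper quotient factors through $Q$'' conclusion more explicit, which is exactly what feeds into Theorem~\ref{thm:notquasiel}.
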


\begin{proof}
By the existence of chief series for finite groups, any $G\ne\{1\}$ is
an extension \eqref{ses} of some group $Q$, with simple $S$.
Because $G$ has a primitive relation, $Q$ is quasi-elementary by Theorem
\ref{thm:notquasiel}.

Now suppose $S$ is not cyclic, and consider the kernel $K$ of the map
$G\to \Aut(S^d)$ given by conjugation. The centre of $S^d$ is trivial,
so $K\cap S^d=\{1\}$. If $K\ne\{1\}$, then $G/K$ is a proper
non-quasi-elementary quotient, contradicting Theorem \ref{thm:notquasiel}. So
$G\injects \Aut(S^d)$ and, factoring out $S^d\iso\Inn(S^d)$, we get
$Q\injects \Out(S^d)$. In the same way, if $N\normal G$
is a proper subgroup of $S^d$, then $G/N$ is not quasi-elementary, so
again $N=\{1\}$.

Finally, the description of $\prim(G)$ is given by Theorem \ref{thm:notquasiel}.
\end{proof}

\begin{remark}
\label{rem:nonsolu}
Conversely, suppose that $G$ is an extension as in \eqref{ses}
with \qe{p} $Q$, non-cyclic $S$ and $Q\injects \Out(S^d)$.
Suppose also that $S^d$ has no proper non-trivial subgroups
that are normal in $G$. It follows that every non-trivial normal subgroup
of $G$ contains $S^d$. So $G$ is not quasi-elementary
but every proper quotient of it is $p$-quasi-elementary, 
and therefore $G$ has a primitive relation.
This proves Theorem \ref{thm:A} for all non-soluble groups.
\end{remark}

\section{Primitive relations in $p$-groups}\label{sec:pgroups}

\begin{definition}
The normal $p$-rank of a finite group $G$ is the maximum of the ranks of the elementary abelian normal
$p$-subgroups of $G$.
\end{definition}

As in Bouc's work \cite{Bouc}, the groups of normal $p$-rank one will be of particular importance to us.
We will repeatedly need the following classification:

\begin{proposition}[\cite{Gor-68}, Ch. 5, Thm. 4.10]\label{prop:prankone}
Let $P$ be a $p$-group with normal $p$-rank one. Then $P$ is one of the following:
\begin{itemize}
\item the cyclic group $C_{p^n}\!=\!\langle c|c^{p^n}\!=\!1\rangle$;
\item the dihedral group $D_{2^{n+1}}\!=\!\langle c,x|c^{2^n}\!=\!x^2\!=\!1, xcx\!=\!c^{-1}\rangle$ with $n\ge 3$;
\item the generalised quaternion group, $Q_{2^{n+2}} \!=\! \langle c,x|c^{2^n}\!=\!x^2,x^{-1}cx \!=\! c^{-1}\rangle$ with $n\ge 1$;
\item the semi-dihedral group $SD_{2^{n+1}}\!=\!\langle c,x|c^{2^n} \!=\! x^2 \!=\! 1, xcx\!=\!c^{2^{n-1}-1}\rangle$ with $n\ge 3$.
\end{itemize}
\end{proposition}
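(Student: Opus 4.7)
The plan is to reduce the classification to the classical theorem of Burnside that a $p$-group containing a unique subgroup of order $p$ is cyclic when $p$ is odd, and is either cyclic or generalised quaternion when $p=2$, and then to work a bit harder in the prime $2$ case.

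The first observation is that $Z(P)$ is cyclic. Indeed $\Omega_1(Z(P))$ is an elementary abelian normal subgroup of $P$, hence of rank at most one by hypothesis; since a non-trivial $p$-group has non-trivial centre, $|\Omega_1(Z(P))|=p$, and an abelian $p$-group with a unique subgroup of order $p$ is cyclic. The same argument shows that $P$ has a unique minimal normal subgroup $N=\Omega_1(Z(P))$, because every minimal normal subgroup of a $p$-group meets $Z(P)$ non-trivially and must have rank one by hypothesis.

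For $p$ odd I would then argue that $P$ itself has a unique subgroup of order $p$, and hence is cyclic by Burnside's theorem. Suppose otherwise that $x\in P$ has order $p$ with $\langle x\rangle\ne N$. Taking the normal closure $X=\langle x\rangle^P$ and exploiting regularity for odd primes (elements of order $p$ in a normally generated subgroup of small class produce a non-trivial subgroup of exponent $p$), one extracts inside $X$ an elementary abelian normal subgroup of $P$ of rank at least two, contradicting the hypothesis. This may be organised as an induction on $|P|$, with proper subgroups handled by the inductive form of the statement (they too have normal $p$-rank at most one).

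The substantive case is $p=2$. If $P$ has a unique involution, Burnside identifies $P$ as cyclic or as $Q_{2^{n+2}}$. Otherwise pick a non-central involution $t$ and a maximal subgroup $M\le P$ with $t\notin M$. Any elementary abelian subgroup of $M$ that is normal in $M$ has, after taking its normal closure in $P$, the same rank and is normal in $P$, so $M$ has normal $2$-rank at most one. By induction $M$ is one of the four listed types. A case analysis on the conjugation action of $t$ on $M$, using that the unique $P$-normal involution is the central one, rules out all but $M$ cyclic, and then pins down the action of $t$ on $M$ by inversion or by the semi-dihedral formula $c\mapsto c^{2^{n-1}-1}$, yielding $P\cong D_{2^{n+1}}$ or $SD_{2^{n+1}}$ with $n\ge 3$ (the bound $n\ge 3$ being forced, since $D_8$ has two normal Klein four-subgroups and so fails the rank hypothesis).

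The hard part will be the $p=2$ analysis: cleanly distinguishing dihedral from semi-dihedral requires a careful commutator calculation on the cyclic maximal subgroup $\langle c\rangle$, and at every stage one must re-verify that no stray $(\Z/2)^2$ slips in as a normal subgroup — this is what eliminates $D_8$ and the various \textquotedblleft mixed\textquotedblright{} extensions that would otherwise appear in the list.
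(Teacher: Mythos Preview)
The paper does not prove this proposition at all: it is quoted verbatim as Gorenstein, Theorem 5.4.10, and used as a black box throughout. So there is no ``paper's own proof'' to compare against, and your write-up functions as an attempt to supply what the paper deliberately outsources.

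As a proof sketch, however, it has two genuine gaps. For odd $p$, the sentence ``exploiting regularity for odd primes \ldots one extracts inside $X$ an elementary abelian normal subgroup of $P$ of rank at least two'' is not an argument: the normal closure $X=\langle x\rangle^P$ need not be regular, and even when $\Omega_1(X)$ has exponent $p$ it is not automatically abelian, let alone normal of rank $\ge 2$ in $P$. The standard route here (and Gorenstein's) is to take a maximal normal abelian subgroup $A$, note $C_P(A)=A$, deduce $A$ is cyclic from the rank hypothesis, and then use that $\Aut(C_{p^n})$ is cyclic for odd $p$ to force $P$ cyclic. Your sketch does not do this and the regularity hand-wave does not substitute for it.

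For $p=2$, the claim that ``any elementary abelian subgroup of $M$ that is normal in $M$ has, after taking its normal closure in $P$, the same rank'' is false as stated: with $[P:M]=2$ and $E\cong C_2\times C_2$ normal in $M$, the closure $E^P=EE^t$ can be nonabelian of order $8$ (for instance $D_8$), so you cannot directly conclude that $M$ inherits normal $2$-rank one. One can repair this (e.g.\ by arguing via the unique central involution and analysing $EE^t$ more carefully, or by using that a maximal normal abelian subgroup of $P$ already lies in $M$), but the inductive step as written does not go through. The subsequent case analysis on the action of $t$ on $M$ is also only gestured at; distinguishing dihedral, semi-dihedral, and the excluded quasi-dihedral-type extensions is exactly where the work lies, as you yourself flag.
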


We now present an alternative proof of the Tornehave--Bouc theorem
(\cite{Bouc}, Cor. 6.16). The ingredients are the results of
\S\ref{s:imprim} and a lemma of Bouc \cite[Lemma 6.15]{Bouc}.

\begin{theorem}[Tornehave--Bouc]\label{thm:pgroups}
  All Brauer relations in $p$-groups are $\Z$-linear
  combinations of ones lifted from subquotients $P$ of the following types:
  \begin{enumerate}[(i)]
    \item\label{item:cpcp}
    $P\iso C_p\times C_p$ with the relation
    $1 - {\sum_C C} + p\cdot P$, the sum taken over
    all subgroups of order $p$;
    \item\label{item:Heisenberg}
    $P$ is the Heisenberg group of order $p^3$ (which is isomorphic to $D_8$
    when $p=2$), and the relation is
    $I - IZ - J + JZ$ where $Z=Z(P)$ and $I$ and $J$ are two
    non-conjugate non-central subgroups of order $p$;
    \item\label{item:dih}
    $P\iso D_{2^{n}}$, $n\geq 4$, with the relation
    $I - IZ - J + JZ$, where $Z=Z(P)$
    and $I$ and $J$ are two non-conjugate non-central subgroups of order 2.
  \end{enumerate}
\end{theorem}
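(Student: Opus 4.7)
The plan is to show that if a $p$-group $P$ has $\Prim(P)\neq 0$, then $P$ is isomorphic to $C_p\times C_p$, the Heisenberg group of order $p^3$ with $p$ odd, or $D_{2^n}$ with $n\ge 3$. Together with a direct character computation, in the spirit of Examples~\ref{ex:cpcp} and~\ref{exer:G20}, verifying that each of the three listed relations is non-zero and primitive, this is equivalent to the stated theorem.

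I would first apply Lemma~\ref{lem:noncycliccentre}, which reduces to the case $P=C_p\times C_p$ (giving case~(\ref{item:cpcp})) or $Z(P)$ cyclic. Assume henceforth that $Z(P)$ is cyclic, and let $Z\le Z(P)$ be the unique subgroup of order $p$, which is then the unique minimal normal subgroup of $P$. If $\Theta=\sum n_i H_i$ is a primitive $P$-relation, then Lemma~\ref{lem:deflations} forces some $H_i$ not to contain $Z$, while Proposition~\ref{prop:biggroup} (applied to $P$ viewed as a trivially $p$-quasi-elementary group $\{1\}\rtimes P$) forces the collection $\{H_i\}$ not to fit inside any single proper subgroup of $P$.

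The heart of the argument is then to invoke Bouc's moving lemma \cite[Lemma 6.15]{Bouc}, which, modulo imprimitive relations, rewrites each subgroup $H\not\ge Z$ in terms of subgroups with better behaviour relative to the socle. Iterating reduces $\Theta$ to a standard form whose support is tightly controlled by the normal $p$-rank of $P$. If the normal $p$-rank of $P$ is at least $2$, the procedure collapses every surviving $H_i$ either into a subgroup containing $Z$ or into a common proper subgroup of $P$, unless $P$ is precisely the Heisenberg group of order $p^3$ with $p\ge 3$; this is case~(\ref{item:Heisenberg}). If instead $P$ has normal $p$-rank one, Proposition~\ref{prop:prankone} lists four candidates. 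Cyclic groups contribute no relations. In $Q_{2^{n+2}}$ the unique involution generates $Z$, so every non-trivial subgroup contains $Z$, and by Lemma~\ref{lem:deflations} every relation is lifted from $P/Z$, hence imprimitive. For $P=SD_{2^{n+1}}$ one checks that every subgroup not containing $Z$ lies in the unique dihedral maximal subgroup $D\iso D_{2^n}$; the splitting $\Theta=(\Theta-Z\Theta)+Z\Theta$ then exhibits $\Theta$ as a sum of a relation lifted from $P/Z$ and a relation with all subgroups in $D\lneq P$, the latter being imprimitive by Proposition~\ref{prop:biggroup}. What survives is $P=D_{2^n}$ with $n\ge 3$, giving case~(\ref{item:dih}).

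The main obstacle will be the application of the moving lemma to reduce to normal $p$-rank one while correctly isolating the Heisenberg group as the sole higher-rank exception; the book-keeping is delicate. The separate elimination of $Q_{2^{n+2}}$ and $SD_{2^{n+1}}$ in the rank-one case is comparatively straightforward once the very specific subgroup lattices produced by Proposition~\ref{prop:prankone} are exploited, as sketched above.
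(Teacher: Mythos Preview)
Your approach is essentially identical to the paper's: reduce to cyclic centre via Lemma~\ref{lem:noncycliccentre}, handle normal $p$-rank $\ge 2$ by combining Bouc's moving lemma with Corollary~\ref{cor:smallgpbiggp} (isolating the Heisenberg group as the unique exception), and dispose of the rank-one groups from Proposition~\ref{prop:prankone} one by one. One small slip: in the generalised quaternion case your appeal to Lemma~\ref{lem:deflations} does not cover relations in which the trivial subgroup $\{1\}$ appears; you should instead invoke Corollary~\ref{cor:smallgpbiggp} (with $D$ any proper subgroup containing $\centralCp$), exactly as your $\Theta=(\Theta-Z\Theta)+Z\Theta$ splitting does in the semi-dihedral case, and as the paper does uniformly.
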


\begin{proof}
Let $P$ be a $p$-group that has a primitive relation.
By Lemma \ref{lem:noncycliccentre}, either
$P=C_p\times C_p$ or $P$ has cyclic centre.
The former is covered by Example \ref{ex:cpcp}, so
assume that we are in the latter case, and let $\centralCp$
be the unique central subgroup of order $p$.

First, suppose $P$ has normal $p$-rank $r\ge 2$,
with $V=(C_p)^r\normal P$. The conjugation action of $P$ on $V$
is upper-triangular, as is any action of a $p$-group on an
$\F_p$-vector space. So there are normal subgroups $(C_p)^j\normal G$
for all $j\le r$, and we denote by $E$ one for $j=2$.
Note that $\centralCp\subset E$, since any normal subgroup of a $p$-group
meets its centre. By \cite[Lemma 6.15]{Bouc}, any occurrence in a relation
of a subgroup that does not contain $\centralCp$ and is not contained
in the centraliser $C_P(E)$ of $E$ in $P$ can be replaced by
subgroups that either contain $\centralCp$ or are contained in $C_P(E)$,
using a relation from a subquotient isomorphic to the Heisenberg
group of order $p^3$. The remaining relation is then
imprimitive by Corollary \ref{cor:smallgpbiggp}.
So $P$ has a primitive relation if and only if it is
the Heisenberg group of order $p^3$.

Now suppose that $r=1$, so $P$ is as in \ref{prop:prankone}.
If $P$ is cyclic or
generalised quaternion, then every non-trivial subgroup contains $\centralCp$, so
$P$ has no primitive relations by Corollary \ref{cor:smallgpbiggp}. If $P$ is
semi-dihedral, then the only conjugacy class of non-trivial subgroups of
$P$ that do not contain $\centralC2$ is that of non-central involutions,
represented by $\langle x\rangle$, say. But $x$ and $\centralC2$ generate a proper
subgroup of $P$, so $P$ again
has no primitive relations by Corollary \ref{cor:smallgpbiggp}.
Finally, if $P$ is dihedral of order $2^n$, $n\geq 4$, then
there are two conjugacy classes of non-trivial subgroups that do not
contain $\centralC2$, represented, say, by $I$ and $J$.
Using the relation in (\ref{item:dih}) (cf. \cite[page 25]{Bouc})
any occurrence of $I$ in a relation can be replaced by $J$ and by subgroups
that contain $\centralC2$. In the resulting relation, every subgroup will
either contain $\centralC2$ or will be contained in
$D=\centralC2\times J$, which is a proper subgroup of $P$.
So, applying Corollary \ref{cor:smallgpbiggp} again,
we see that the group of primitive relations of $P$
is generated by the relation of
(\ref{item:dih}) and the theorem is proved.
\end{proof}

\section{Main reduction in soluble groups}
\label{sec:solvable}

\begin{theorem}\label{thm:mainred}
Every finite soluble group that has a primitive relation is either
\begin{enumerate}[(i)]
\item quasi-elementary, or
\item\label{item:serre} of the form $(C_l)^d\rtimes Q$,
  where $l$ is a prime, $Q$ is quasi-elementary~and~acts faithfully and
  irreducibly on the $\bF_l$-vector space $(C_l)^d$ by conjugation,~or
\item\label{item:almostserre}
  of the form $(C_l\rtimes P_1)\times (C_l\rtimes P_2)$,
  where $l,p$ are primes, and $P_i\injects\Aut C_l$ are cyclic $p$-groups.
\end{enumerate}
\end{theorem}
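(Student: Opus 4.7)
Plan: My approach is to use Corollary \ref{cor:ses} to obtain the extension $1\to N\to G\to Q\to 1$ with $N\iso C_l^d$ minimal normal and $Q$ quasi-elementary, and then to split into cases according to whether $G$ itself is quasi-elementary and, if not, according to the number of minimal normal subgroups of $G$. Theorem \ref{thm:notquasiel} provides the crucial additional constraint: when $G$ is not quasi-elementary, every proper quotient of $G$ is $p$-quasi-elementary for a single fixed prime $p$. The recurring tool I will invoke is that in any $p$-quasi-elementary group $C\rtimes P$, the maximal normal subgroup of prime-to-$p$ order is the cyclic subgroup $C$, so any normal $p'$-subgroup of a $p$-quasi-elementary group is cyclic. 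If $G$ is quasi-elementary we land in case (i).

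Assume $G$ is not quasi-elementary, and first suppose $N$ is the unique minimal normal subgroup of $G$. Minimality gives the irreducibility of the $Q$-action on $N$ for free. For faithfulness I would show $C_G(N)=N$ by combining the uniqueness of the minimal normal with the $p$-quasi-elementary structure of the proper quotients $G/N$ and $G/C_G(N)$: a non-trivial $C_G(N)/N$ would, together with these two compatible $p$-quasi-elementary decompositions, constrain $G$ itself to be $p$-quasi-elementary, contradicting our assumption. Once $Q\hookrightarrow \Aut(N)$ faithfully, Schur--Zassenhaus splits the extension $1\to N\to G\to Q\to 1$ whenever $\gcd(|N|,|Q|)=1$, while in the remaining case $l\mid |Q|$ the $p$-quasi-elementary layering $Q=C'\rtimes P'$ permits a stepwise construction of a complement, lifting the Hall $p'$-part first and then the Sylow $p$-part using that the image in $\Aut(N)$ is faithful. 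This places $G$ in case (ii).

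Next suppose $G$ has at least two distinct minimal normals $N_1, N_2$, so that $N_1\cap N_2=1$. Since $N_2$ sits inside the $p$-quasi-elementary quotient $G/N_1$ as a normal subgroup, it embeds into the cyclic part whenever $l_2\ne p$, forcing $N_2=C_{l_2}$; symmetrically $N_1=C_{l_1}$. The remaining degenerate configurations are ruled out one by one: (a) $l_i=p$ for some $i$, combined with the $p$-quasi-elementary structure of $G/N_{3-i}$, forces $G$ itself to be $p$-quasi-elementary; (b) $l_1\ne l_2$ with both $\ne p$ makes $N_1N_2\iso C_{l_1l_2}$ a cyclic normal $p'$-subgroup of $G$ with $p$-group quotient, again rendering $G$ quasi-elementary; (c) a third minimal normal $N_3$ would yield a non-cyclic normal $p'$-subgroup in the $p$-quasi-elementary quotient $G/N_3$, impossible. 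So there are exactly two minimal normals with $N_1\iso N_2\iso C_l$ and $l\ne p$. Finally, the conjugation action of $G$ on the socle $N_1\times N_2$ preserves each factor (as both $N_i$ are normal in $G$) and so factors through $\Aut N_1\times \Aut N_2\iso C_{l-1}\times C_{l-1}$; faithfulness on $N_1\times N_2$, argued as in the unique-minimal-normal case, ensures $|G/(N_1\times N_2)|$ is coprime to $l$, so Schur--Zassenhaus splits the extension. The $p$-quasi-elementary structure of each $G/N_i$ then forces the projection of $G/(N_1\times N_2)$ onto the $i$-th factor $C_{l-1}$ to be a cyclic $p$-group $P_i$, yielding the direct product $G=(C_l\rtimes P_1)\times(C_l\rtimes P_2)$ of case (iii).

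The main obstacle will be establishing faithfulness on the socle and, in the multiple-minimal-normals case, extracting the exact direct product decomposition rather than merely an abstract split extension. Both steps require combining the $p$-quasi-elementary data from several proper quotients simultaneously, together with the abelian cyclic structure of $\Aut C_l\iso C_{l-1}$, to disentangle the action of $G$ on each $N_i$ and identify the two $P_i$-factors.
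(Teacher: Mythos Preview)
Your plan to organise the argument by counting minimal normal subgroups is a different route from the paper's (which splits first on whether $l\mid|Q|$ and then on the kernel and reducibility of the action), and the dichotomy is a natural one: case (ii) corresponds exactly to $N$ being the unique minimal normal subgroup, case (iii) to there being exactly two. However, there is a genuine gap at the splitting step when $l\mid|Q|$. Your proposed ``stepwise construction of a complement, lifting the Hall $p'$-part first and then the Sylow $p$-part using that the image in $\Aut(N)$ is faithful'' is not an argument. When $Q$ is non-cyclic $l$-quasi-elementary (so $p=l$) and $d>1$, the paper has to establish $H^2(Q,W)=0$ via inflation--restriction, using the non-obvious input $W^C=0$ (a consequence of irreducibility, not of faithfulness). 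When $d=1$ in that same sub-case, the extension need not split at all and one instead shows directly that $G$ is $l$-quasi-elementary. For $Q$ cyclic, or $p$-quasi-elementary with $p\ne l$ but still $l\mid|Q|$, the paper argues through the derived and Frattini subgroups of the $l$-Sylow $L$ of $G$; these reductions land back in the coprime case only after replacing $W$ by the full $l$-Sylow, which changes both $N$ and $Q$ and does not fit your fixed-$N$ framework.

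There is also an order-of-operations problem in the unique-minimal-normal case: you try to prove faithfulness before splitting, but your sketch that the two $p$-quasi-elementary quotients $G/N$ and $G/C_G(N)$ force $G$ itself to be $p$-quasi-elementary is unjustified and is hard to make precise when $l$ divides $|C_G(N)/N|$. Once splitting is in hand, faithfulness is immediate from uniqueness of $N$ (a non-trivial kernel in $Q$ would be normal in $G$ and meet $N$ trivially), so splitting must come first. Finally, in your case (b) of the multiple-minimal-normals analysis, $N_1N_2$ need not have $p$-power index in $G$; what you actually need is that the full $p'$-Hall $L$ of $G$ is cyclic and normal, which does follow from $L/N_1$ and $L/N_2$ both being cyclic together with $N_1\cap N_2=1$, but this is not the argument you wrote.
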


\begin{proof}
Since $G$ is soluble and has a primitive relation,
by Corollary \ref{cor:ses} it is an extension of the form
\begin{equation}
\label{sess}
  1 \rightarrow (C_l)^d \rightarrow G \rightarrow Q\rightarrow 1, \qquad\qquad d\ge 1,
\end{equation}
with $Q$ quasi-elementary. 
We may assume $d\ge 1$ (otherwise we are in (i)) and $Q\ne\{1\}$
(otherwise $G\iso C_l\times C_l$, e.g. by Theorem \ref{thm:pgroups}, and 
we are in~(iii)).
Consider the various possibilities for
the structure of $Q$ and its action on $W=(C_l)^d$ by conjugation.

{\bf (A) }\emph{Suppose that $l$ does not divide $|Q|$}.
The sequence \eqref{sess} then splits by the Schur--Zassenhaus theorem,
so $G=W\rtimes Q$. The kernel of the action of $Q$ on $W$ is then
a normal subgroup $N\normal G$.

\textbf{Case 1: $N\ne\{1\}$ and $Q$ is cyclic.}
By Corollary \ref{cor:quasielquotients}, $G/N$ is quasi-elementary.
If it is \qe{p} for some $p\neq l$,
then its $l$-part must be
cyclic, so $d=1$. Moreover, since $Q/N$ acts faithfully on $C_l$, it must be a
$p$-group. So, writing $Q = Q_p\times Q_{p'}$, where $Q_p$ is the Sylow $p$-subgroup
of $Q$, we deduce that $N$ contains $Q_{p'}$, which is cyclic of order coprime to $l$,
and so $G=(C_l\times Q_{p'})\rtimes Q_p$ is quasi-elementary (case~(i)).
If $G/N$ is \qe{l}, then $l\nmid|Q|$ implies that $Q/N\normal G/N$,
so $G/N=(Q/N)\times W$. But $N$ is the whole kernel of the action of $Q$
on $W$, so $Q/N$ must be trivial. In this case $Q=N$ is normal in $G$, and
$G=Q\times W$ is again quasi-elementary.

\textbf{Case 2: $N\ne\{1\}$ and $Q$ is not cyclic.}
Write $Q=C\rtimes P$ with $C$ cyclic of order coprime to $lp$
and $P$ a $p$-group.
This time, we know that $G/N$ is \qe{p} by Corollary \ref{cor:quasielquotients}.
Since $p\neq l$, we have $d=1$. Also,
because $G/N$ is \qe{p} and the action of $Q/N$ on $C_l$ is faithful,
$Q/N$ must be a $p$-group.
So $N$ contains $C$, and $G=(C_l\times C)\rtimes P$ is \qe{p}.

\textbf{Case 3: $N=\{1\}$ and $Q$ acts reducibly.}
Since $l\nmid |Q|$, the $\F_l$-representation $W$ of $Q$ is completely reducible.
Say $W=\bigoplus_{i=1}^n V_i$ with irreducible $V_i$; so $V_i\normal G$.

Let $p$ be a prime divisor of $|Q|$. A Sylow $p$-subgroup of $Q$
acts faithfully on $W$, so it acts non-trivially on one of the
$V_i$, say on $V_1$.
Because $U=G/(V_2\oplus\cdots\oplus V_n)\iso V_1\rtimes Q$
is quasi-elementary by Corollary \ref{cor:quasielquotients}, and because its $p$-Sylow
is not normal in it, $U$ must be $p$-quasi-elementary (and not cyclic).
However, Corollary \ref{cor:quasielquotients} asserts that
all proper non-cyclic quotients of $G$ are quasi-elementary with respect
to the same prime, so $|Q|$ cannot have more than one prime divisor.
In other words, $Q$ is a $p$-group.

Now, both $G/V_1$ and $G/V_2$ must be \qe{p}, so their $l$-parts are cyclic.
This is only possible if $n=2$ and $\dim V_1=\dim V_2=1$.
So $W=C_l\times C_l$, and
$$
   Q\>\injects\>(\Aut C_l)\times(\Aut C_l) \>\iso\> \F_l^\times\times\F_l^\times
$$
is an abelian $p$-group.
This is case \eqref{item:almostserre} of the theorem.

\item \textbf{Case 4: $N=\{1\}$ and $Q$ acts irreducibly.}
This is case (\ref{item:serre}).

{\bf (B) }\emph{Suppose that $l$ divides $|Q|$.}

\textbf{Case 5: $Q$ is \qe{l'} for $l'\neq l$}. 
Let $\bar L$ be a Sylow $l$-subgroup of $Q$. Since $l'\ne l$, $\bar L$ is 
cyclic and normal in $Q$, and we write $L\normal G$ for its inverse image in $G$. 
So $G$ is an extension of $\tilde Q=Q/\bar L$ by $L$. By the Schur-Zassenhaus
theorem it is a split extension, and we may view $\tilde Q$ as a subgroup 
of $G$ and consider its conjugation action on $L$.

If the Frattini subgroup $\Phi(L)$ is trivial, then $L\iso (C_l)^m$ for some $m$
and we are back in case (A) of the proof. So suppose that $\Phi(L)\ne\{1\}$. 
Then $G/\Phi(L)$ is quasi-elementary by Corollary \ref{cor:quasielquotients}.

Assume first that $G/\Phi(L)$ is $p$-quasi-elementary for $p\ne l$. 
Then $L/\Phi(L)$ must be cyclic, hence $L$ is cyclic (by a standard property
of $l$-groups). Moreover, $\tilde Q= R\rtimes P$ with $R$ cyclic and $P$ a 
$p$-group, and $G/\Phi(L)=(L/\Phi(L)\times R)\rtimes P$. Now $R$ acts trivially
on $L/\Phi(L)$ and has order prime to~$l$, so $R$ acts trivially on
$L$ by the classical theorem of Burnside 
that the kernel of $\Aut(L)\to \Aut(L/\Phi(L))$ is an $l$-group
(\cite{Gor-68} Ch. 5, Thm. 1.4). It follows that 
$G=(L\times R)\rtimes P$ and $L\times R$ is cyclic, so $G$ is 
$p$-quasi-elementary.

Assume that $G/\Phi(L)$ is $l$-quasi-elementary. Then $\tilde Q$ must be 
cyclic and normal in $G/\Phi(L)$, and therefore $G/\Phi(L)=L/\Phi(L)\times\tilde Q$.
Again $\tilde Q$ acts trivially on $L/\Phi(L)$, hence on $L$ by Burnside's
theorem. It follows that $G=L\times \tilde Q$ is $l$-quasi-elementary.

\textbf{Case 6: $Q$ is non-cyclic \qe{l}.}
Now $Q=C\rtimes P$ with $C$ cyclic of order prime to $l$, and $P$ an $l$-group,
both non-trivial. By Schur-Zassenhaus we may view $C$ as a subgroup of $G$.
We may also assume that $C$ acts non-trivially on $W$, for otherwise 
$C\times W$ is a normal subgroup of $G$ in which $C$ is characteristic,
so $C\normal G$ and $G$ is quasi-elementary. 

Since $|C|$ and $|W|$ are coprime,
$W$ is completely reducible as a representation of $C$ over $\F_l$.
Therefore, the invariant subspace $W^C$ has a (non-trivial) complement on which 
$C$ acts faithfully. 
Since $W^C$ is a $P$-representation, it is a normal subgroup of $G$. If it is 
non-zero, then $G/W^C$ is $l$-quasi-elementary by Corollary \ref{cor:quasielquotients},
so the image of $C$ is normal in it. But so is the image of $W$, so the two 
commute, contradicting the faithfullness of the action of $C$ on $W/W^C$. 
In other words, $W^C=0$.

Now the inflation-restriction sequence for $C\normal Q$ acting on $W$ reads
$$
  H^2(Q/C,W^C) \lar H^2(Q,W) \lar H^2(C,W).
$$
The first group is zero as $W^C=0$, and the last one is zero as it is killed
by $|C|$ and by $|W|$, which are coprime. So the middle group,
which classifies extensions of $Q$ by
$W$ up to splitting, is zero, in other words 
$G=W\rtimes Q$ is a split extension.

Next, we show that $W$ is irreducible as a representation of $Q$. If not,
let $0\subsetneq V\subsetneq W$ be a
subrepresentation. Since $G/V$ is \qe{l}
(Corollary \ref{cor:quasielquotients} again), $C$ must act trivially on $W/V$.
But, using complete reducibility again, this contradicts the fact $W^C=0$.

Finally, consider the kernel $N$ of the action of $Q$ on $W$.
As $G$ is a split extension, $N$ may be viewed as a (normal) subgroup of $G$. 
If $N$ is non-trivial, $G/N$ is \qe{l}, and so
$CN/N\normal G/N$, which implies $CN\normal G$. Moreover, the commutators
$[C,W]$ lie both in $W$ and $CN$, hence in $W\cap CN=\{1\}$. Therefore, 
$W$ centralises $C$, so $C$ is normal in $G$, and it follows that $G$ is \qe{l}.
%
If, on the other hand,
$N$ is trivial, then we are in case~\eqref{item:serre}.
\end{proof}

\begin{remark}\label{rmrk:charSemiDir}
Before continuing, we recall from \cite[\S8.2]{SerLi} the classification
of irreducible characters of semi-direct products by abelian groups.
Let $G=A\rtimes H$
with $A$ abelian. The group $H$ acts on 1-dimensional characters of $A$ via
$$
  h(\chi)(a) = \chi(hah^{-1}),\qquad h\in H,\;a\in A,\;\chi:A\rightarrow \bC^\times.
$$
Let $X$ be a set of representatives of $H$-orbits of these characters.
For $\chi\in X$ write $H_\chi$ for its stabiliser in $H$.
Then $\chi$ can be extended to a one-dimensional
character of its stabiliser $S_\chi = A\rtimes H_\chi$ in $G$ by defining it
to be trivial on $H_\chi$.
Let $\rho$ be an irreducible character of $H_\chi\cong S_\chi/A$ and
lift it to $S_\chi$.
Then $\Ind^G_{S_\chi}(\chi\otimes\rho)$ is an irreducible character of $G$
and all irreducible characters of $G$ arise uniquely in this way,
for varying $\chi\in X$ and $\rho$.
\end{remark}

\begin{proposition}\label{prop:SerreGroups}
Let $G=W\rtimes H$ with $W\iso(C_l)^d$ for $d\ge 2$, and $H$ acting faithfully
on $W$.
Let $\cU$ be a set of representatives of the
$G$-conjugacy classes of hyperplanes $U\subset W$,
and write $H_U=N_H(U)$ for $U\in \cU$. Then
$$
  \Theta = G - H + \sum_{U\in\cU}(H_UU - H_UW)
$$
is a $G$-relation.
\end{proposition}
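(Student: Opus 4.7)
The plan is to show that $\Theta$ vanishes in $R_\Q(G)$, equivalently, that $\langle \pi, \Theta\rangle_G = \dim\pi^G - \dim\pi^H + \sum_U(\dim\pi^{H_UU} - \dim\pi^{H_UW}) = 0$ for every complex irreducible $\pi$ of $G$. By Remark \ref{rmrk:charSemiDir} I write $\pi = \Ind_{S_\chi}^G(\tilde\chi\otimes\rho)$, with $\chi$ ranging over representatives of the $H$-orbits of characters of $W$, $S_\chi = W\rtimes H_\chi$, and $\rho$ an irrep of $H_\chi$. The verification splits naturally according to whether $\chi$ is trivial.

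If $\chi = \triv$, then $\pi$ is inflated from an irrep $\rho$ of $H$, so $\pi^K = \rho^{KW/W}$ depends only on the image of $K$ in $G/W\cong H$; since $GW/W = HW/W = H$ and $H_UUW/W = H_UW/W = H_U$, the four terms cancel in two pairs. If $\chi \neq \triv$, then $\pi|_W$ is supported on the nontrivial $H$-orbit of $\chi$, so $\pi^W = 0$. This kills $\dim\pi^G$ and, using that $G/H_UW \cong H/H_U$ as $G$-sets (so $\Ind_{H_UW}^G\triv$ is inflated from $\Ind_{H_U}^H\triv$), also $\dim\pi^{H_UW}$. A Mackey decomposition of $\pi|_H$ over the single double coset $S_\chi H = G$ gives $\pi|_H = \Ind_{H_\chi}^H\rho$, whence $\dim\pi^H = \dim\rho^{H_\chi}$ by Frobenius reciprocity.

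The crux is then the calculation of $\dim\pi^{H_UU}$. Here $\pi^U$ is the sum of the $\chi'$-isotypic components of $\pi|_W$ for those $\chi' \in H\cdot\chi$ with $\chi'|_U = \triv$, and such $\chi'$ exist precisely when $U$ lies in the $H$-orbit of $\ker\chi$; so exactly one $U \in \cU$ contributes a nonzero term. For this $U$ one has $H_\chi \subseteq H_U = N_H(\ker\chi)$, the $H_U$-action on the relevant characters is transitive with stabiliser $H_\chi$, and the $\chi$-isotypic subspace $V_\chi \subset \pi$ identifies $H_\chi$-equivariantly with $\rho$. Together these give $\pi^U \cong \Ind_{H_\chi}^{H_U}\rho$ as $H_U$-representations (with $U$ acting trivially), whence $\dim\pi^{H_UU} = \dim(\pi^U)^{H_U} = \dim\rho^{H_\chi}$ by Frobenius reciprocity, cancelling the $-\dim\rho^{H_\chi}$ from $\dim\pi^H$. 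The main obstacle is precisely this identification $\pi^U \cong \Ind_{H_\chi}^{H_U}\rho$, which requires carefully tracking the $H_U$-permutation of the $W$-weight spaces in $\pi$ and pinning down the $H_\chi$-module structure on $V_\chi$ via the explicit induced-representation model from Remark \ref{rmrk:charSemiDir}.
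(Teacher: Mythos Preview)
Your argument is correct and follows essentially the same strategy as the paper: parametrise the irreducibles of $G$ via Remark~\ref{rmrk:charSemiDir}, split into the cases $\chi=\triv$ and $\chi\ne\triv$, and verify the vanishing of the inner product with each permutation character using Frobenius reciprocity and Mackey. The only cosmetic difference is that in the $\chi\ne\triv$ case the paper computes the Mackey double-coset sum for $\langle\C[G/H_UU],\Ind^G(\chi\otimes\rho)\rangle$ directly, whereas you first identify $\pi^U\cong\Ind_{H_\chi}^{H_U}\rho$ and then take $H_U$-invariants; these are the same computation. One small point to make explicit: when you write ``$H_U=N_H(\ker\chi)$'' you are tacitly assuming the representative $U\in\cU$ equals $\ker\chi$; this is harmless (either choose $\cU$ so that $\ker\chi\in\cU$, as the paper does, or note that $\dim\pi^{H_UU}$ depends only on the $G$-conjugacy class of $H_UU$), but it should be stated.
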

\begin{proof}
We retain the notation of Remark \ref{rmrk:charSemiDir} for the irreducible
characters of $G$.
Choose the set $X$ of representatives for the $H$-orbits of 1-dimensional
characters of $W$ in such a way that $\ker\chi\in\cU$ for $\triv\ne\chi\in X$.

To prove that $\Theta$ is a relation, it suffices to show that
\beq
  \C[G/H]\ominus\triv &=&
  \displaystyle
  \bigoplus_{\scriptscriptstyle\chi\in X, \chi\ne\triv}
  \>\>\Ind^G_{S_\chi}(\chi\otimes\triv_{H_\chi}),\cr
  \C[G/H_UU]\ominus\C[G/H_UW] &=&
  \displaystyle
  \!\!\!\bigoplus_{\scriptscriptstyle{\chi\in X,\ker\chi=U}}
  \!\Ind^G_{S_\chi}(\chi\otimes\triv_{H_\chi})
  \qquad\text{for }U\in\cU.
\eeq
To do this, first compute the decomposition of $\C[G/T]$ into irreducible
characters for an arbitrary $T\<G$. The multiplicity $\mm T\chi\rho$
of $\Ind^G_{S_\chi}(\chi\otimes\rho)$ in $\C[G/T]$ is
\beq
\mm T\chi\rho &=
\>\>\langle \Ind_{S_\chi}^G(\chi\!\tensor\!\rho),\Ind^G{\triv_T}\rangle_G =
  \langle \Res_T\Ind_{S_\chi}^G(\chi\!\tensor\!\rho),\triv\rangle_T\\[4pt]
  &\displaystyle=\!\!\!\sum_{\scriptscriptstyle x\in S_\chi\backslash G/T}\langle\Ind^{{}^xT}\Res^{S_\chi}_{S_\chi\cap {}^xT}(\chi\!\tensor\!\rho),\triv\rangle_{{}^xT}\\[4pt]
  &\displaystyle=\!\!\!\sum_{{\scriptscriptstyle x\in S_\chi\backslash G/T}}\langle\Res_{S_\chi\cap {}^xT}(\chi\!\tensor\!\rho),\triv\rangle_{S_\chi\cap {}^xT}.
\eeq
Next, take $T=H$. Since $W\subseteq S_\chi$ for each $\chi\in X$,
there is a unique double coset in
$S_\chi\backslash G/H$, the trivial one. So
$$
  \mm H\chi\rho = \langle\Res^{S_\chi}_{H_\chi}(\chi\otimes\rho),\triv_{H_\chi}\rangle_{H_\chi} =
  \langle\rho,\triv_{H_\chi}\rangle_{H_\chi} =
  \left\{
  \begin{array}{ll}1,&\rho=\triv\cr0,&\rho\ne\triv,\end{array}
  \right.
$$
as claimed.
Finally, for $U\in\cU$ we compare $\mm{H_UU}\chi\rho$ and $\mm{H_UW}\chi\rho$.

If $\chi=\triv$ and $\rho$ is an irreducible representation of $G/W$
lifted to $G$, then
\beq
  \mm{H_UU}\chi\rho &=& \langle\Ind^G\triv_{H_UU},\rho\rangle_G
  = \langle \triv,\Res_{H_UU}\rho\rangle_{H_UU}\\[2pt]
  &=& \langle \triv,\Res_{H_U}\rho\rangle_{H_U}
  = \langle\Ind^G\triv_{H_UW},\rho\rangle_G
  = \mm{H_UW}\chi\rho.
\eeq
For $\chi\ne \triv$,
$$
  \mm{H_UU}\chi\rho \>= \!\!\!\!
  \sum_{\smash{x\in S_\chi\backslash G/H_UU}}\bigl\langle
    \Res_{S_\chi\cap {}^x(H_UU)}(\chi\otimes\rho),
    \triv \bigr\rangle_{S_\chi\cap {}^x(H_UU)}.
$$
If $\ker\chi\neq U$, or if $\ker\chi = U$ but $x$ represents a non-trivial
double coset, then the corresponding summand is 0, since
$S_\chi\cap {}^x(H_UU)$ contains ${}^xU$, a hyperplane of $W$ distinct from
$\ker\chi$, and the restriction to this hyperplane is a sum of several
copies of one non-trivial character. The same is true for $H_UW$.
If, on the other hand, $\ker\chi = U$, then $H_\chi \le H_U$, so that
$S_\chi \cap H_UU = H_\chi U$. Therefore
$$
  \mm{H_UU}\chi\rho =
  \left\{
  \begin{array}{ll}1,&\rho=\triv\cr0,&\rho\ne\triv\end{array}
  \right. \qquad\text{and}\qquad \mm{H_UW}\chi\rho=0.
$$
\end{proof}

\begin{proposition}\label{prop:G20}
Let $G=C_l\rtimes H$, with $l$ prime and $H\ne\{1\}$ acting
faithfully on $C_l$. Then $\prim(G)\cong \bZ$.
If $H\cong C_{p^{k}}$ is of prime-power order, then $\prim(G)$ is generated by
$$
  C_{p^{k-1}}-pH-C_l\rtimes C_{p^{k-1}} + pG.
$$
If $H\cong C_{mn}$ with coprime $m,n>1$, then $\prim(G)$ is generated by
$$
  G-H+\alpha(C_n-C_l\rtimes C_n)+\beta(C_m-C_l\rtimes C_m),
$$
where $\alpha m+\beta n=1$.
\end{proposition}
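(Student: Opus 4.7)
The plan is to exhibit both displayed relations as explicit $\Z$-linear combinations of the relations $R_{\tilde H} = \tilde H - [H:\tilde H]H - (C_l\rtimes\tilde H) + [H:\tilde H]G$ provided by Example~\ref{exer:G20}. In Case~1, the formula is precisely $R_{C_{p^{k-1}}}$ (since $[H:C_{p^{k-1}}]=p$). In Case~2, a short computation shows that $\alpha R_{C_n} + \beta R_{C_m}$ equals the displayed expression whenever $\alpha m+\beta n = 1$, since the $H$- and $G$-coefficients then combine to $-1$ and $+1$ respectively, and the remaining terms are already in the required form. This disposes of the assertion that both expressions are Brauer relations.

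Before splitting into cases I would record the subgroup structure of $G$. Since $H$ acts faithfully on $C_l$, the normal subgroups of $G$ are $\{1\}$, $C_l$, and $C_lH'$ for $H'\le H$, and hence every proper quotient of $G$ is cyclic. In Case~2, I would then show that $G$ is not quasi-elementary: the only normal cyclic subgroups of $G$ are $\{1\}$ and $C_l$, so a decomposition $G=C\rtimes P$ with $P$ a $p$-group would force either $P=G$ (impossible since $G$ is non-abelian with $|G|$ not a prime power) or $P=G/C_l=H=C_{mn}$ (impossible since $m,n>1$ are coprime, so $mn$ has two distinct prime divisors). Theorem~\ref{thm:notquasiel}(1) then gives $\Prim(G)\cong\Z$, generated by any relation with $G$-coefficient~$1$, and the displayed $\Theta$ visibly has this property.

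For Case~1, $G$ is non-cyclic $p$-quasi-elementary, so Theorem~\ref{thm:notquasiel} does not apply and a direct argument is needed. Since every proper quotient of $G$ is cyclic, imprimitive relations are precisely those induced from proper subgroups, and in particular all have $G$-coefficient $0$. For any $G$-relation $\Theta$, the deflation $C_l\Theta$ is an $H$-relation, hence zero in $B(H)$ because $H=C_{p^k}$ is cyclic; comparing the coefficient of $C_{p^k}$ on the two sides yields the key identity $n_H=-n_G$. By Theorem~\ref{thm:Tims}, we also have $p\mid n_G$. Writing $n_G=cp$, I would replace $\Theta$ by $\Theta-c\,R_{C_{p^{k-1}}}$, obtaining a relation with $n_G=n_H=0$ that therefore only involves subgroups contained in the proper subgroup $C_l\rtimes C_{p^{k-1}}\lneq G$. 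Proposition~\ref{prop:biggroup} then declares this residual relation imprimitive. Hence $R_{C_{p^{k-1}}}$ generates $\Prim(G)$, and its class has infinite order because its $G$-coefficient is $p\ne 0$ while any imprimitive relation has $G$-coefficient $0$; thus $\Prim(G)\cong\Z$. The delicate point is the interlocking of Theorem~\ref{thm:Tims} with the deflation identity $n_H=-n_G$: together they simultaneously kill the two ``large'' subgroups $G$ and $H$, which are exactly the subgroups of $G$ not contained in $C_l\rtimes C_{p^{k-1}}$, and it is only after removing both that Proposition~\ref{prop:biggroup} applies.
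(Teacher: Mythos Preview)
Your proof is correct. For the composite-order case you follow the paper's route exactly: check that $G$ is not quasi-elementary and that every proper quotient is cyclic, then invoke Theorem~\ref{thm:notquasiel}(1).

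For the prime-power case your argument is actually more complete than the paper's. The paper only records that $p\mid n_G$ for every relation (Theorem~\ref{thm:Tims}) and that imprimitive relations have $n_G=0$ (since all proper quotients are cyclic, imprimitive here means induced). This yields a surjection $\Prim(G)\twoheadrightarrow\Z$ via $\Theta\mapsto n_G/p$ and shows the displayed relation has infinite order, but it does not address injectivity, i.e.\ why $n_G=0$ forces imprimitivity. You supply exactly this missing step: the deflation identity $n_H=-n_G$ (from $C_l\Theta=0$ in the cyclic group $H$) means that subtracting the right multiple of $R_{C_{p^{k-1}}}$ kills \emph{both} $n_G$ and $n_H$ simultaneously, after which every surviving term lies in the proper subgroup $C_l\rtimes C_{p^{k-1}}$ and Proposition~\ref{prop:biggroup} finishes. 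What you flag as the ``delicate point'' is precisely where the paper's own proof is elliptical, and your route via deflation plus the big-subgroup criterion is the natural way to close it.
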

\begin{proof}
The existence of the two relations follows immediately from Example
\ref{exer:G20},
applied to $\tilde{H}=C_m\<H$ and $\tilde{H}=C_n\<H$.
If $H$ has composite order, the result follows from
Theorem \ref{thm:notquasiel}, case (1). If $H\iso C_{p^k}$, then $G$ is \qe{p},
so the coefficient
of $G$ in any relation is divisible by $p$ by Theorem \ref{thm:Tims}. Clearly,
no relation in which $G$ enters with non-zero coefficient can be induced from a
subgroup. But also, no such
relation can be lifted from a proper quotient, since all proper quotients of
$G$ are cyclic and therefore have no non-trivial relations.
\end{proof}

\begin{corollary}
Theorem \ref{thm:A} holds for all finite non-quasi-elementary
groups.
\end{corollary}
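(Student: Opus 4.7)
The plan is to split the argument into the non-soluble and soluble non-quasi-elementary cases, both of which are essentially in hand by earlier results of the paper.

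For non-soluble $G$, Corollary \ref{cor:ses} already forces $G$ into the form $1\to S^d\to G\to Q\to 1$ of Theorem \ref{thm:A}(3a), with $Q\injects\Out(S^d)$ and $S^d$ the unique minimal normal subgroup, and simultaneously identifies $\Prim(G)$ as $\Z$ or $\Z/p\Z$ according as $Q$ is cyclic or not. Remark \ref{rem:nonsolu} supplies the converse --- every such $G$ does carry a primitive relation --- and the last sentence of Theorem \ref{thm:notquasiel} says $\Prim(G)$ is generated by any relation in which $G$ enters with coefficient $\pm 1$; Snaith's construction cited in the introduction produces an explicit one. This matches the ``$S\not\iso\F_l$'' row of the table.

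For soluble non-quasi-elementary $G$, I would first invoke Theorem \ref{thm:mainred} to reduce to one of two shapes: type (ii), $W\rtimes Q$ with $W=(C_l)^d$ and $Q$ quasi-elementary acting faithfully and irreducibly, or type (iii), $(C_l\rtimes P_1)\times(C_l\rtimes P_2)$. These are exactly the soluble part of Theorem \ref{thm:A}(3a) and (3b) respectively: in (ii), irreducibility of the $Q$-action is equivalent to $W$ being minimal normal in $G$; in (iii) the factor $C_l\times C_l$ is manifestly not minimal. In both cases one checks that every proper quotient of $G$ is quasi-elementary --- in (ii) any non-trivial normal subgroup contains $W$ by minimality, so the proper quotients are quotients of $Q$; in (iii) a direct inspection of normal subgroups of $(C_l\rtimes P_1)\times(C_l\rtimes P_2)$, using the faithfulness of each $P_i$ on its own $C_l$, yields the same. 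Theorem \ref{thm:notquasiel} then gives $\Prim(G)\iso\Z$ when $Q$ is cyclic (or trivial) and $\Z/p\Z$ otherwise, matching the table.

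It remains to exhibit the generators. For $d=1$ (automatically $Q=C_{mn}$ with coprime $m,n>1$, since any $d=1$ example with $Q$ a prime-power cyclic group would itself be quasi-elementary), Proposition \ref{prop:G20} provides the required relation $G-Q+\alpha(C_n-\F_l\rtimes C_n)+\beta(C_m-\F_l\rtimes C_m)$. For $d\ge 2$ --- covering both type (ii) with $d\ge 2$ and type (iii) viewed as $\F_l^2\rtimes(P_1\times P_2)$ --- Proposition \ref{prop:SerreGroups} produces $\Theta=G-Q+\sum_U(H_UU-H_UW)$, which applies since that proposition only requires faithfulness of the action, not irreducibility. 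The main point to check, and probably the only non-routine one, is that the coefficient of $G$ in this relation really is $1$, i.e. that no summand $H_UW$ coincides with $G$; equivalently, that no hyperplane $U\subset W$ is normalised by all of $Q$. This is immediate from irreducibility in type (ii), and in type (iii) it follows from the faithfulness of each $P_i$ on the corresponding $C_l$, which prevents any diagonal hyperplane from being $Q$-invariant. With this verified, the last sentence of Theorem \ref{thm:notquasiel} upgrades the exhibited relations to generators of $\Prim(G)$, completing the match with the table.
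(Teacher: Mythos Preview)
Your approach is essentially the same as the paper's: split into non-soluble (handled by Corollary~\ref{cor:ses} and Remark~\ref{rem:nonsolu}) and soluble non-quasi-elementary (reduce via Theorem~\ref{thm:mainred}, verify proper quotients are quasi-elementary, then apply Theorem~\ref{thm:notquasiel} together with Proposition~\ref{prop:G20} for $d=1$ and Proposition~\ref{prop:SerreGroups} for $d\ge 2$).

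There is one concrete error. In type (iii), $G=(C_l\rtimes P_1)\times(C_l\rtimes P_2)$ with $Q=P_1\times P_2$ acting on $W=C_l\times C_l$, you assert that no hyperplane of $W$ is $Q$-stable. This is false: the two coordinate lines $C_l\times\{1\}$ and $\{1\}\times C_l$ are manifestly $Q$-invariant (each $P_i$ acts only on its own factor). What the faithfulness of each $P_i$ actually buys is that the \emph{diagonal} lines are not $Q$-stable, so exactly two hyperplanes are $Q$-invariant. Consequently, the coefficient of $G$ in the relation of Proposition~\ref{prop:SerreGroups} is $1-2=-1$, not $+1$. This does not damage the conclusion: $-\Theta$ then has $G$-coefficient $+1$, and the last sentence of Theorem~\ref{thm:notquasiel} applies to $-\Theta$, so $\Theta$ still generates $\Prim(G)$. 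You should correct the claim accordingly; the paper itself glosses over this coefficient check, so your instinct to verify it was good, just the verification in case (iii) needs fixing.
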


\begin{proof}
The theorem is already proved for non-soluble groups (Remark \ref{rem:nonsolu}),
so suppose $G$ is soluble but not quasi-elementary.
Then, if $G$ has a primitive relation, it falls under \eqref{item:serre}
or \eqref{item:almostserre} of Theorem \ref{thm:mainred}.
This gives one direction.

Conversely, suppose $G$ is of one of these two types, in particular
$G\iso(C_l)^d\rtimes Q$, with $Q$ quasi-elementary
and acting faithfully on $(C_l)^d$ by conjugation.
It is easy to see that
every proper quotient of $G$ is quasi-elementary.
So Theorem \ref{thm:notquasiel} combined with
Proposition \ref{prop:SerreGroups} for $d\ge 2$
and Proposition \ref{prop:G20} for $d=1$
give the asserted description of $\Prim(G)$.
\end{proof}

\section{Quasi-elementary groups}\label{sec:qegen}

In this section, we determine the structure
and the representatives of $\prim(G)$ for quasi-elementary groups
that are not $p$-groups. This is case (4) of Theorem \ref{thm:A},
and it is by far the most difficult one.

\begin{notation}
For the rest of the section we fix
\vspace{\baselineskip}\newline
\begin{tabular}{l | l}
$P$ & a non-trivial $p$-group,\\
$C$ & a non-trivial cyclic group of order coprime to $p$,\\
$G=C\rtimes P$ & a quasi-elementary group with normal subgroup $C$\\ &
and a fixed complementary subgroup $P\le G$,\\
$K\normal P$ & the kernel of the conjugation action of $P$ on $C$.
\end{tabular}
\end{notation}
We begin by showing that the presence of primitive relations forces tight
restrictions on the structure of $K$. We then
write down generators for $\Prim(G)$ and give
necessary and sufficient group-theoretic criteria for these relations
to be primitive.

\subsection{The kernel of the conjugation action}\label{sec:kernel}

\begin{lemma}\label{lem:StructOfP}
If $P$ has normal $p$-rank one or is isomorphic to $D_8$, and $K\ne\{1\}$,
then $G$ has no primitive relations.
\end{lemma}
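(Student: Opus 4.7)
The plan is to apply Corollary \ref{cor:smallgpbiggp} to any $G$-relation, using $N=\centralCp$ together with a suitable proper subgroup $D\lneq G$ containing $\centralCp$. The first observation is that $\centralCp\le Z(G)$: since $K\ne\{1\}$ is normal in $P$ and $P$ has a unique minimal normal subgroup (namely $\centralCp$, the unique subgroup of order $p$ in $Z(P)$, present both when $P$ has normal $p$-rank one and when $P\iso D_8$), we have $\centralCp\le K$, so $\centralCp$ centralises $C$ as well as $P$ and hence lies in $Z(G)$. Given any $H\le G$ appearing in a relation, after replacing $H$ by a $G$-conjugate I may assume its Sylow $p$-subgroup $Q_H$ lies inside $P$, so $H=(H\cap C)\rtimes Q_H$ by Schur--Zassenhaus.

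For $P$ cyclic or generalised quaternion, every nontrivial subgroup of $P$ contains $\centralCp$, whence either $Q_H=\{1\}$ (so $H\le C$) or $Q_H\supseteq\centralCp$ (so $H\supseteq\centralCp$); taking $D=C\cdot\centralCp$, proper whenever $|P|>p$, Corollary \ref{cor:smallgpbiggp} applies. The only degenerate situation $|P|=p$ forces $K=P$, making $G\iso C\times C_p$ cyclic with no relations. For $P$ semi-dihedral, Proposition \ref{prop:prankone} implies that the nontrivial subgroups of $P$ not containing $\centralC2$ form a single $P$-conjugacy class of non-central involutions $\langle v\rangle$, so each problematic $H$ is contained in $D=C\cdot\langle v,\centralC2\rangle$, which is proper in $G$ since $|P|\ge 16$; Corollary \ref{cor:smallgpbiggp} again applies.

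The hard case is dihedral $P$, including $P\iso D_8$, where non-central involutions of $P$ fall into two $P$-conjugacy classes represented by $I$ and $J$. The plan is to adapt the moving step from the proof of Theorem \ref{thm:pgroups}(iii) to the quasi-elementary setting: use the $P$-relation $\Theta_P=I-IZ-J+JZ$ with $Z=\centralC2$ to construct, for each subgroup of the form $H=B\rtimes\langle v\rangle$ with $B\le C$ and $v$ in the $I$-class, a companion imprimitive $G$-relation whose terms other than $H$ either contain $\centralC2$ or have Sylow $p$-subgroup in the $J$-class. Concretely, when $B$ is $P$-invariant, lifting $\Theta_P$ along $B\rtimes P\to P$ and inducing to $G$ yields an imprimitive relation of exactly the required shape $B\langle v\rangle - B\langle v,Z\rangle - B\langle v'\rangle + B\langle v',Z\rangle$; the general case I expect to handle by first passing to the $P$-core $B_0=\bigcap_{p\in P}pBp^{-1}$ and then upgrading along the chain $B_0\le B'\le B$ via lifts in the normalizers $N_G(B')=B'\rtimes N_P(B')$. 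After such a move, every subgroup in the relation either contains $\centralC2$ or lies in $D=C\cdot\langle J,\centralC2\rangle$, a proper subgroup of $G$ since $|P|\ge 8$, and Corollary \ref{cor:smallgpbiggp} completes the proof. The main obstacle is precisely this last step --- performing the move when $B$ is not $P$-invariant --- which demands a careful Burnside-ring computation inside the normalizer chain and is the only place where the argument is not essentially immediate from the cyclic/quaternion/semi-dihedral cases.
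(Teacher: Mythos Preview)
Your approach matches the paper's almost exactly: both treat the cyclic/quaternion, semi-dihedral, and dihedral cases separately, both invoke Corollary~\ref{cor:smallgpbiggp} with $N=\centralCp$, and in the dihedral case both use the $P$-relation $I-IZ-J+JZ$ to move subgroups with Sylow in the $I$-class over to the $J$-class before applying Corollary~\ref{cor:smallgpbiggp} with $D=C\cdot\langle J,\centralC2\rangle$.

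The ``main obstacle'' you flag, however, does not exist. Since $C$ is cyclic, every subgroup $B\le C$ is characteristic in $C$, hence normal in $G$ and in particular $P$-invariant. Thus the case ``$B$ is $P$-invariant'' is the only case: for any $H=B\rtimes\langle v\rangle$ with $v$ in the $I$-class you can always lift $\Theta_P$ along $(B\rtimes P)/B\cong P$ and, if $B\ne C$, induce from the proper subgroup $B\rtimes P$ (and if $B=\{1\}$ the induction from $P\lneq G$ already makes the relation imprimitive; if $B\ne\{1\}$ the lift from the proper quotient does). This is precisely what the paper does, phrased there as ``using a relation from a proper subquotient'' of $G$. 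Your normalizer-chain manoeuvre is unnecessary, and once you drop it your argument is complete.
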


\begin{proof}
By Proposition \ref{prop:prankone}, $P$ is either cyclic, generalised quaternion,
semi-dihedral, or dihedral. We will consider these cases separately.
We may assume that $P\not\iso C_p$, for otherwise
$K=P$ and \hbox{$G=P\times C$} is cyclic. In the remaining cases, we use
the notation of Proposition \ref{prop:prankone} for the generators $c,x$ of $P$.
Denote by $\centralCp$ the unique
central subgroup of $P$ of order $p$. Note that $K$ contains $\centralCp$,
since any normal subgroup of a $p$-group intersects its centre non-trivially.

If \emph{$P$ is cyclic or generalised quaternion,} then every
non-trivial subgroup of $P$ contains $\centralCp$. So every subgroup of $G$
either contains $\centralCp$, or contains a non-trivial subgroup of $C$,
or is contained in $D=\centralCp\times C\normal G$.
By Corollary~\ref{cor:smallgpbiggp}, $G$ has no primitive relations.

If \emph{$P$ is semi-dihedral}, then there is only one conjugacy
class of subgroups of $P$ that do not contain $\centralC2$, represented by
$\langle x\rangle$.
Now, up to conjugation, every subgroup of $G$
either contains $\centralC2$ or a non-trivial subgroup of $C$, or is contained
in $D=C\rtimes(\centralC2\times \langle x\rangle )\normal G$.
By Corollary \ref{cor:smallgpbiggp}, we are done.

If \emph{$P$ is dihedral}, then there are two conjugacy classes of non-trivial
subgroups of $P$ that intersect $\langle c\rangle$ trivially, $I$ and $J$,
say. They are each generated by a non-central involution.
There is a $P$-relation (cf. Theorem \ref{thm:pgroups})
\[
  I - J - I\centralC2 + J\centralC2.
\]
Thus, any occurrence of $I$ in any relation can be replaced by groups that
either contain $\centralC2$ or are contained in $J\centralC2$, using a relation
that is induced from $P$, which is a proper subgroup of $G$. Similarly, any
occurrence of $\tilde{C}\rtimes I$ for $\tilde{C}\le C$ can be replaced by
subgroups that either contain $\centralC2$ or
are contained in $C\rtimes J\centralC2$ using a relation from a proper subquotient.
In summary, by adding imprimitive relations to any given $G$-relation, all
subgroups can be arranged to either contain $\centralC2$ or be contained in
$C\rtimes J\centralC2$ and we are again done by Corollary \ref{cor:smallgpbiggp}.
\end{proof}

\begin{lemma}\label{lem:normalnoncentral}
Suppose $P$ has a non-central normal subgroup $E\iso C_p\times C_p$
that intersects $K$ non-trivially. Then $G$ has no primitive relations.
\end{lemma}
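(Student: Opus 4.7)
The plan is to show $\Prim(G)=0$ by arranging, modulo imprimitive relations, every $G$-relation so that each subgroup appearing either contains the central order-$p$ subgroup $\centralCp$ or sits inside a certain proper subgroup $D\lneq G$; Corollary~\ref{cor:smallgpbiggp} will then finish the job. The approach mirrors the Heisenberg step of Theorem~\ref{thm:pgroups}; the new ingredient is to carry Bouc's moving lemma from $P$ across the semidirect product $G=C\rtimes P$ by means of inflation from, and induction from, the intermediate subgroups $\tilde C\rtimes P$.

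First I would set up the key subgroups. By Lemma~\ref{lem:noncycliccentre} I may assume $Z(G)$ cyclic. Since $E\normal P$ is non-central, $Z:=E\cap Z(P)$ is the unique non-trivial proper $P$-submodule of $E$, of order $p$. The subgroup $E\cap K\normal P$ is non-trivial and contained in $E$, so it equals $E$ or $Z$; either way $Z\le K$, whence $Z\le Z(P)\cap K\le Z(G)$ and $Z=\centralCp$. Set $D=C\rtimes C_P(E)$; since $E\not\le Z(P)$ gives $C_P(E)\lneq P$, this is a proper subgroup of $G$, and it contains $\centralCp$.

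Next I would execute the moving step. Every subgroup of $G$ is $G$-conjugate to one of the form $\tilde C\rtimes H$ with $\tilde C\le C$ and $H\le P$ (using that subgroups of cyclic $C$ are $P$-stable, and Schur--Zassenhaus to land the complement in $P$). Call $H$ \emph{bad} if $\centralCp\not\le H$ and $H\not\le C_P(E)$; this is precisely when $\tilde C\rtimes H$ is neither $\supseteq\centralCp$ nor $\subseteq D$. For bad $H$, Bouc's moving lemma \cite[Lemma~6.15]{Bouc} yields
\[
  H\;\equiv\;\sum_j n_j H_j'\pmod{\text{imprimitive in }K(P)},
\]
with each $H_j'$ good. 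I would then inflate this identity along the quotient $\tilde C\rtimes P\twoheadrightarrow P$ (legitimate because $\tilde C$ is characteristic in $C$, hence normal in $\tilde C\rtimes P$) and induce from $\tilde C\rtimes P$ up to $G$, to deduce the $G$-congruence
\[
  \tilde C\rtimes H\;\equiv\;\sum_j n_j\,(\tilde C\rtimes H_j')\pmod{\text{imprimitive in }K(G)}.
\]
For $\tilde C\lneq C$ the error term is induced from the proper subgroup $\tilde C\rtimes P\lneq G$ and so is imprimitive by definition; for $\tilde C=C$ the operation degenerates to inflation from $G/C\cong P$, and every term of the error then contains the normal subgroup $C$, so Lemma~\ref{lem:deflations} supplies the imprimitivity.

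Applying this replacement to each bad subgroup in an arbitrary $G$-relation $\Theta$ will reduce $\Theta$, modulo imprimitive relations, to a $G$-relation whose terms are all good. Corollary~\ref{cor:smallgpbiggp} with $N_1=\centralCp$ and proper subgroup $D$ then declares this residual relation imprimitive, forcing $\Theta$ itself to be imprimitive. The main obstacle I foresee is the borderline case $\tilde C=C$ in the moving step: here the induction step contributes nothing and imprimitivity has to be recovered via Lemma~\ref{lem:deflations}, exploiting the fact that every inflated subgroup contains $C\normal G$ so that the relation is automatically lifted from $G/C$.
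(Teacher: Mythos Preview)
Your argument is correct and mirrors the paper's proof: both establish that $U=E\cap Z(P)\le K$ is central in $G$, then use Bouc's moving lemma on each $\tilde C\rtimes H$ (transported to $G$ via inflation and/or induction from proper subquotients) before finishing with Corollary~\ref{cor:smallgpbiggp}. Your case split ($\tilde C\lneq C$ handled by induction from $\tilde C\rtimes P$, $\tilde C=C$ by inflation from $G/C$) is just a cosmetic reorganisation of the paper's split ($\tilde C=1$ by induction from $P$, $\tilde C\neq 1$ by lifting from $G/\tilde C$); note also that in your borderline case $\tilde C=C$ you do not really need Lemma~\ref{lem:deflations}, since ``lifted from the proper quotient $G/C$'' is already imprimitive by definition.
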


\begin{proof}

Since $E\normal P$, the intersection $U=E\cap Z(P)$ is non-trivial.
By assumption, $U$ is not the whole of $E$, so $C_p\iso U\normal P$,
and the action of $P$ on $E$ by conjugation factors through a group
$\smallmatrix 1*01$ of order $p$. In particular, no other $C_p\<E$ except
for $U$ is normal in $P$, so every normal subgroup of $P$ that meets
$E$ non-trivially must contain $U$; hence $U\subset K$.
So $U$ commutes both with $C$ and with $P$, in particular $U\normal G$.

The centraliser $C_P(E)$ of $E$ in $P$ has index $p$ in $P$.
By \cite[Lemma 6.15]{Bouc}, if $H$ is any subgroup of $P$ that does not
contain $U$ and is not contained in $C_P(E)$, then any occurrence
of $H$ in a relation can be replaced by subgroups that either contain
$U\le Z(G)$ or are contained in $C_P(E)$ using a relation induced
from $P$, which is a proper subgroup of $G$. Similarly, any group of the form
$\tilde{C}\rtimes H$ for $\tilde{C}\le C$ and $H$ as above can be
replaced by subgroups that either contain $U$ or are contained
in $D=C\rtimes C_P(E)$ using a relation from the quotient $G/\tilde{C}$.
By Corollary \ref{cor:smallgpbiggp}, $G$ has no primitive relations.
\end{proof}

\begin{corollary}\label{cor:ZP}
If $K\ne\{1\}$ and $P$ has cyclic centre, then $G$ has no primitive relations.
\end{corollary}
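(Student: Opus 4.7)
The plan is a case split on the normal $p$-rank of $P$, reducing both cases to the two preceding lemmas. If $P$ has normal $p$-rank one or $P\cong D_8$, Lemma \ref{lem:StructOfP} gives the conclusion directly, so I would immediately discharge that case and focus on $P$ of normal $p$-rank $\ge 2$ with $P\not\cong D_8$. In that range, the goal is to produce a non-central normal subgroup $E\cong C_p\times C_p$ of $P$ meeting $K$ non-trivially, and then invoke Lemma \ref{lem:normalnoncentral}.

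For the main case I would fix a normal elementary abelian subgroup $V\cong (C_p)^r$ of $P$ with $r\ge 2$. The cyclicity of $Z(P)$ is used twice. First, because $V\normal P$ is a non-trivial normal subgroup of a $p$-group, $V\cap Z(P)\neq \{1\}$; but $V\cap Z(P)$ is elementary abelian and sits inside cyclic $Z(P)$, so it has order exactly $p$ and therefore equals the unique central subgroup $\centralCp$ of order $p$. Second, since $K$ is a non-trivial normal subgroup of $P$ (being the kernel of $P\to\Aut C$), the same argument forces $\centralCp\le K$.

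Next I would construct $E$. Because $\centralCp\le Z(P)$, the conjugation action of $P$ on $V$ descends to an $\bF_p$-linear action of the $p$-group $P$ on $V/\centralCp$, which therefore has a fixed line; let $E\le V$ be the preimage of such a line. Then $E$ is a $P$-invariant subgroup of order $p^2$, elementary abelian as a subgroup of $V$, and contains $\centralCp\le K$, so $E\cap K\neq\{1\}$. It is non-central because otherwise $E\le V\cap Z(P)=\centralCp$, contradicting $|E|=p^2$. Applying Lemma \ref{lem:normalnoncentral} to $E$ finishes the proof.

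There is no genuine obstacle: the entire argument is a bookkeeping reduction that exploits the simultaneous consequences of $Z(P)$ being cyclic, namely that $\centralCp\le K$ and that any normal elementary abelian subgroup of rank $\ge 2$ must escape the centre. The only point worth stating carefully is the extraction of $E$ from $V$, which is why I would factor by $\centralCp$ and pick a $P$-invariant line, rather than try to cut $V$ down directly.
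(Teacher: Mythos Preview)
Your argument is correct and follows essentially the same route as the paper: split on the normal $p$-rank of $P$, use Lemma~\ref{lem:StructOfP} in the rank-one case, and otherwise produce a normal non-central $E\cong C_p\times C_p$ meeting $K$ via the unique $C_p\le Z(P)$, then apply Lemma~\ref{lem:normalnoncentral}. The only cosmetic difference is that you treat $D_8$ as part of the first case, whereas the paper lets $D_8$ fall through to the second case (since $D_8$ has normal $p$-rank two and a non-central normal $V_4$); both are fine.
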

\begin{proof}
If $P$ has normal $p$-rank one, we are done by Lemma \ref{lem:StructOfP}.
Otherwise $P$ has a normal subgroup $E\iso C_p\times C_p$
(cf. proof of \ref{thm:pgroups}). Since $Z(P)$ is cyclic, $E$ is not central.
Also, both $E$ and $K$ intersect $Z(P)$ non-trivially, so they both contain
the unique $C_p\le Z(P)$, and thus $G$ has no primitive relations by
Lemma \ref{lem:normalnoncentral}.
\end{proof}

\begin{lemma}
\label{lem:herzog}
Let $T$ be any $p$-group. Then either $T=\{1\}$ or $T\iso D_8$
or $T$ has normal $p$-rank one or the number of normal subgroups
of $T$ isomorphic to $C_p \times C_p$ is congruent to 1 modulo $p$.
\end{lemma}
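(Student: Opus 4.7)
The cases $T=\{1\}$ and $T\cong D_8$ are settled by inspection ($D_8$ has exactly two normal $V_4$'s), and if $T$ has normal $p$-rank one then Proposition~\ref{prop:prankone} shows $T$ has no normal $C_p\times C_p$ subgroup. So assume $T$ has normal $p$-rank at least two and $T\not\cong D_8$; write $\mathcal N$ for the set of normal $C_p\times C_p$ subgroups of $T$, the goal being $|\mathcal N|\equiv 1\pmod p$. The plan is a double count over central lines: for $L\le\Omega_1(Z(T))$ with $|L|=p$, set $n(L)=|\{E\in\mathcal N:L\le E\}|$. The $T$-action on any non-central $E\cong C_p\times C_p$ is a non-trivial unipotent action on $\F_p^2$, so it has a unique fixed line $E\cap Z(T)$, and counting pairs $(E,L)$ with $L\le E\cap Z(T)$ gives
\[
\sum_L n(L) \;=\; (p+1)\,|\mathcal N_{Z}| + |\mathcal N_{NZ}| \;\equiv\; |\mathcal N|\pmod p.
\]
Since the number of central lines is $(p^r-1)/(p-1)\equiv 1\pmod p$ with $r=\dim\Omega_1(Z(T))$, it is enough to show $n(L)\equiv 1\pmod p$ for each such $L$.

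Fix $L$ and set $\bar T=T/L$. Since $L\le Z(E)$ and $E/L$ is cyclic, every $E\in\mathcal N$ containing $L$ is abelian, and such $E$ correspond bijectively to the lines $\bar E\le\Omega_1(Z(\bar T))$ whose generator lifts to an element of order $p$ in $T$. This is governed by the well defined map
\[
q:\Omega_1(Z(\bar T))\longrightarrow L,\qquad q(\bar v)=v^p.
\]
For odd $p$, the identity $(v_1v_2)^p=v_1^pv_2^p[v_2,v_1]^{\binom p2}$ together with $[v_i,v_j]\in L$ of order dividing $p$ and $p\mid\binom p2$ forces $q$ to be $\F_p$-linear, so $n(L)=(p^{\dim\ker q}-1)/(p-1)$, and this is $\equiv 1\pmod p$ as soon as $\ker q\ne 0$. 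The kernel is non-trivial because the normal $p$-rank hypothesis supplies a normal $V_4$ containing $L$: if $\Omega_1(Z(T))$ has rank $\ge 2$, take any central $V_4$ through $L$, and otherwise $\Omega_1(Z(T))=L$ has rank one, so any normal $V_4$ meets $Z(T)=L$ and therefore contains it.

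For $p=2$ the map $q$ is a quadratic form with polar form $B(\bar v_1,\bar v_2)=[v_1,v_2]\in L$. An enumeration of non-zero isotropic vectors, split by $\dim\Omega_1(Z(\bar T))$ and the Arf invariant of $q$ on the non-degenerate quotient, shows that $n(L)$ is always odd apart from one configuration: $\dim\Omega_1(Z(\bar T))=2$ with $q$ hyperbolic and non-zero. The main obstacle is to show that this single exception forces $T\cong D_8$, so that under our standing assumption it does not occur. In this configuration the two non-zero isotropic vectors produce normal $V_4$'s $V,V'$ with $V\cap V'=L$ and $|VV'|=8$; a direct commutator computation shows $VV'$ is non-abelian, hence $VV'\cong D_8$. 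The hypothesis $\Omega_1(Z(\bar T))=\overline{VV'}$ translates into $[T,VV']\le L$, and using the non-degeneracy of the commutator pairing on $VV'/L$ one obtains a central product decomposition $T=VV'\cdot C_T(VV')$ over $L$. The rank condition $\dim\Omega_1(Z(\bar T))=2$ then pins down $C_T(VV')=L$, so $T=VV'\cong D_8$. Since $T\not\cong D_8$, the bad configuration is vacuous and $n(L)$ is odd as required.
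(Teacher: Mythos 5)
Your argument is correct, but it takes a genuinely different route from the paper's. The paper quotes Herzog's theorem (the number of elements of order $p$ in $T$ is $\equiv -1 \pmod{p^2}$ exactly when $T\ne D_8$ has normal $p$-rank at least two) and converts that element count into a count of normal $C_p\times C_p$ subgroups by an orbit-counting argument, split according to whether $Z(T)$ is cyclic. You instead give a self-contained proof: the double count over central lines $L\le\Omega_1(Z(T))$ reduces the lemma to $n(L)\equiv 1\pmod p$ for each $L$, and the subgroups counted by $n(L)$ are read off from the $p$-th power map $q\colon\Omega_1(Z(T/L))\to L$, which is linear for odd $p$ and, for $p=2$, a quadratic form whose polar form is the commutator pairing; the exceptional role of $D_8$ then appears structurally as the unique bad configuration (a hyperbolic plane in dimension two), eliminated via the central-product decomposition $T=VV'\cdot C_T(VV')$ with $C_T(VV')=L$. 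The paper's route buys brevity at the cost of citing Herzog; yours is longer but elementary and explains conceptually why $D_8$ is the only exception. Two small points to tighten: in the rank-one case you write ``meets $Z(T)=L$'', whereas what holds and what you need is $\Omega_1(Z(T))=L$, so that the intersection of a normal $C_p\times C_p$ with $Z(T)$, being elementary abelian and non-trivial, equals $L$; and in the $p=2$ enumeration the configurations with $n(L)=0$ (anisotropic $q$ on a plane, or $\dim\Omega_1(Z(\bar T))=1$ with $q\ne 0$) also give an even count, so you should state explicitly that these are vacuous because, as you argued for odd $p$ (the argument is characteristic-free), every central line lies in some normal $C_p\times C_p$, i.e.\ $n(L)\ge 1$.
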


\begin{proof}
By a Theorem of Herzog \cite[Theorem 3]{Her},
the number $\alpha$ of elements in
$T$ of order $p$ is congruent to $-1$ modulo $p^2$ if and only if $T\not\cong D_8$
and has normal $p$-rank greater than one.
We consider two cases:

\textbf{Case 1:} $Z(T)$ is cyclic. Since every normal subgroup of $T$ intersects
the centre non-trivially and
since there is a unique subgroup $\langle z\rangle$ of order $p$ in the centre,
any normal $C_p\times C_p$ is generated by $z$ and a non-central element $a$ of
order $p$. For an arbitrary non-central element $a$ of order $p$,
$\langle a,z\rangle$ need not be normal, but the size of its orbit under
conjugation is a power of $p$. So the number of normal such $C_p\times C_p$
is congruent modulo $p$ to the number of all $C_p\times C_p$ that intersect the
centre non-trivially. Finally, $p^2-p$ different non-central elements define
the same subgroup, so the number $\beta$ of normal subgroups isomorphic to
$C_p\times C_p$ is congruent to $(\alpha-(p-1))/(p^2-p)$ modulo $p$. Thus,
\beq
  T\not\cong D_8 \text{ and } \exists \;C_p\times C_p\normal T & \Leftrightarrow & \alpha \equiv -1 \pmod{p^2}\\
  & \Leftrightarrow & \alpha - p +1 \equiv -p \pmod{p^2}\\
  & \Leftrightarrow & \beta = \frac{\alpha-(p-1)}{p^2-p} \equiv 1 \pmod{p},
\eeq
as required.

\textbf{Case 2:} $Z(T)$ is not cyclic. Then a normal subgroup of $T$ isomorphic
to $C_p\times C_p$ is either contained in $Z(T)$ or intersects it in a line.
Let $Z(T)$ have normal $p$-rank $r\ge 2$. Any $C_p\times C_p \le Z(T)$ is
generated by two linearly independent elements of order $p$ and there
are $(p^r-1)(p^r-p)/2$ unordered pairs of such elements. Each $C_p\times C_p$
contains $(p^2-1)(p^2-p)/2$ pairs and so there are
\[
\frac{(p^r-1)(p^r-p)}{(p^2-1)(p^2-p)}=\frac{(p^r-1)(p^{r-1}-1)}{(p^2-1)(p-1)}\equiv 1\pmod{p}
\]
distinct subgroups of $Z(T)$ that are isomorphic to $C_p\times C_p$.
Since there are $p^r-1 \equiv -1\pmod{p^2}$ elements in $Z(T)$ of order $p$,
we have by Herzog's theorem that
$$
  T\not\cong D_8 \text{ and } \exists \;C_p\times C_p\normal T \Leftrightarrow
  \#\{g\in T \backslash Z(T)|\; g^p=1\} \equiv 0\pmod{p^2}.
$$
For any given line in $Z(T)$, the number of $C_p\times C_p \le T$ intersecting
$Z(T)$ in that line is therefore divisible by $p$ by the same counting as
in case 1, and so the number of normal $C_p\times C_p$ in $T$ that intersect
$Z(T)$ in a line is divisible by~$p$, as required.
\end{proof}

\begin{proposition}\label{prop:kernelprank1}
Suppose that $G$ has a primitive relation. Then
either $K=\{1\}$ or $K\iso D_8$ or $K$ has normal $p$-rank one. In particular,
$K$ has cyclic centre.
\end{proposition}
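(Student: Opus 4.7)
The plan is to argue by contradiction. Suppose $G$ has a primitive relation while $K\ne\{1\}$, $K\not\iso D_8$, and $K$ has normal $p$-rank at least two. I will produce a subgroup $E\iso C_p\times C_p$ that is normal in $P$ and contained in $K$; once such an $E$ is in hand, a short dichotomy finishes the argument by invoking the preparatory lemmas of this subsection.

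The main input for producing $E$ is Lemma~\ref{lem:herzog} applied to $T=K$. By assumption none of the first three alternatives of that lemma holds for $K$, so the number of subgroups of $K$ that are isomorphic to $C_p\times C_p$ and normal in $K$ is $\equiv 1\pmod p$. Call this set $\mathcal{E}$. Because $K\normal P$, the group $P$ acts on $\mathcal{E}$ by conjugation, and every $P$-orbit has size a power of $p$. The congruence $|\mathcal{E}|\equiv 1\pmod p$ then forces at least one $P$-fixed point, i.e.\ a subgroup $E\iso C_p\times C_p$ with $E\normal P$ and $E\le K$.

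Now I split on whether $E$ is central in $P$. If $E\not\le Z(P)$, then $E$ is a non-central normal subgroup of $P$ meeting $K$ non-trivially (indeed $E\le K$), so Lemma~\ref{lem:normalnoncentral} implies that $G$ has no primitive relations, contradicting the assumption. If instead $E\le Z(P)$, then $E$ commutes with $P$; combined with $E\le K$, which means $E$ commutes with $C$, this forces $E\le Z(G)$. Thus $Z(G)$ is non-cyclic, and since $C\ne\{1\}$ has order coprime to $p$ we have $G\not\iso C_p\times C_p$, so Lemma~\ref{lem:noncycliccentre} again yields a contradiction.

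The final \emph{in particular} assertion follows at once: $Z(\{1\})$ is trivial, $Z(D_8)\iso C_2$ is cyclic, and by Proposition~\ref{prop:prankone} every $p$-group of normal $p$-rank one (cyclic, dihedral, generalised quaternion or semi-dihedral) has cyclic centre. I do not expect a real obstacle here; the one step that might look subtle is the fixed-point argument, but it is the standard application of $p$-group orbit counting to a set whose cardinality is $\equiv 1\pmod p$, and both Lemma~\ref{lem:herzog} and Lemma~\ref{lem:normalnoncentral} have been prepared precisely for this moment.
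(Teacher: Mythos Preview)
Your argument is correct and essentially identical to the paper's own proof: apply Lemma~\ref{lem:herzog} to $K$, use the $p$-group fixed-point lemma for the conjugation action of $P$ on the set of normal $C_p\times C_p$ subgroups of $K$, and then split according to whether the resulting $E$ is central in $P$, invoking Lemma~\ref{lem:noncycliccentre} or Lemma~\ref{lem:normalnoncentral} accordingly. Your write-up even makes explicit two points the paper leaves implicit (that $G\not\iso C_p\times C_p$ because $C\ne\{1\}$, and the verification of the ``in particular'' clause via Proposition~\ref{prop:prankone}).
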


\begin{proof}
If $K$ is not of these three types, then by Lemma \ref{lem:herzog},
the set of normal $C_p\times C_p$ in $K$ has cardinality coprime to $p$.
The $p$-group $P$ acts on this set by conjugation, so there is a fixed point.
In other words, there is $N=C_p\times C_p \normal K$ that is fixed under
conjugation by $P$, so $N\normal P$.
Now, either $N$ is in the centre of $P$, in which case it is also
in the centre of $G$ (since $K$ commutes with $C$ by definition),
and $G$ has no primitive relations
by Lemma \ref{lem:noncycliccentre}; or $N$ is a normal non-central
subgroup of $P$ that intersects $K$ non-trivially, and then $G$ has no
primitive relations by Lemma \ref{lem:normalnoncentral}.
\end{proof}

\begin{lemma}
\label{lemcl2}
If $C_{l^2}\le C$ for some prime $l$, then $\Prim(G)=0$.
\end{lemma}

\begin{proof}
Write $C=C_{l^n}\times \tilde{C}$ with $\tilde{C}$ cyclic of order
prime to $l$.
There is a unique $C_l\normal C$,
and any subgroup of $G$ that does not contain it is contained in 
$\tilde{C}\rtimes P$ and, a fortiori,
in $D=(C_l\times \tilde{C})\rtimes P\lneq G$.
Since $C_l \normal G$, we are done by Corollary \ref{cor:smallgpbiggp}.
\end{proof}

\begin{assumption}\label{not:quasielem}
In view of \ref{prop:kernelprank1} and \ref{lemcl2}, from now we assume:
\begin{enumerate}
\item $G=C\rtimes P$, with $P$ a $p$-group, and
$C=C_{l_1}\times \ldots\times C_{l_t}$ cyclic with $t$ distinct primes 
$l_j\ne p$.
\item
$K=\ker(P\to\Aut C)$ is either trivial, or isomorphic to $D_8$ or has normal $p$-rank one. 
\end{enumerate}
\end{assumption}

\begin{notation}\label{not:CK}
The following notation will be used in the rest of the section. Here, $N$
is any normal subgroup of $G$, and $j$ is an index, $1\le j\le t$.\\[-6pt]
\begin{center}
  \def\ffootnote#1{\addtocounter{footnote}{1}%
    \insert\footins{\footnotesize{$^{\arabic{footnote}}$#1}}%
    \addtocounter{footnote}{-1}%
  }
  \ffootnote{  
    If $K\not\iso Q_8$ is non-trivial, then it contains a unique 
    cyclic subgroup of index $p$, which is normal in $G$. In $Q_8$, there 
    are three cyclic subgroups of index 2 and the 2-group $P$ acts\\\hskip -1.8em on
    them by conjugation, so this action has a fixed point, which is also 
    normal in $G$.}
\begin{tabular}{l|@{\ \ }l}
  $\centralCp$ & the unique central subgroup of $K$ (and of $G$) of order $p$,\\
   &when $K$ is non-trivial.\\
  $C_K$ & either $K$ if $K$ is cyclic, or a cyclic index 2 subgroup of $K$ that\\ 
  & is normal in $G$ otherwise%
  \footnote{\relax}.\\
  $\bigC$ & $CC_K$; this is the largest normal cyclic subgroup of $G$.\\
  $\cH_N$ & a set of representatives of conjugacy classes of subgroups of
  $G$\\ & that intersect $N$ trivially.\\
  $\cH^c_N$ & the set of subgroups of $G$ that intersect $N$ non-trivially.\\
  $\cH^c$ & short for $\cH_{\bigC}^c$.\\
  $\cH$ & short for $\cH_{\bigC}$; we take $\cH$ to
  consist of subgroups of $P$.\\
  $\cH_m$ & the set of elements of $\cH$ of maximal size.\\
  $C^j$ & $C_{l_1}\times\ldots\times\widehat{C_{l_j}}\times\ldots\times C_{l_t}$,
  the $l_j'$-Hall subgroup of $C$.\\
  $K_j$ & $\ker(P\rightarrow \Aut(C^j))=
  \bigcap_{i\neq j}\ker(P\rightarrow \Aut C_{l_i})$.\\
  & Thus $K\le K_j$ and $K_j/K$ is cyclic, as it injects into $\Aut C_{l_j}$.\\
  $\tilde{K}_j$ & $K_j\cap \ker(P\rightarrow \Aut C_K)$.\\[3pt]
\end{tabular}
\end{center}

\vskip 3pt

For elements $\Theta_1=\sum_H n_H H$ and $\Theta_2=\sum_H m_H H$ of the
Burnside ring of $G$, write
$$
\Theta_1\equiv \Theta_2\pmod{\cH_N^c}$$
if $n_H=m_H$ for all $H\in \cH_N$.
\end{notation}

Note that $\centralCp$, $C_K$, $\bigC$, $C^j$, $K_j$ 
are all normal (even characteristic) in~$G$, 
and $\bigC$ is the largest normal cyclic subgroup of $G$.
The quotient $\bar P=G/\bigC$ acts faithfully on $\bigC$ by conjugation
(as seen from the presentation of generalised quaternion, semi-dihedral 
and dihedral groups in Proposition \ref{prop:prankone}), and is therefore abelian.
In particular, $G$ is an extension
$$
  1\rightarrow \bigC\rightarrow G\rightarrow \smallP \rightarrow 1,
$$
of an abelian $p$-group by a cyclic group. 
Also, all $H\in \cH$ are abelian, as they inject into $G/\bigC\cong \smallP$.
Finally, $C_K\le \tilde{K_j}$, and the quotient 
$\tilde{K_j}/C_K\injects \Aut C_{l_j}$ is cyclic and acts trivially on $C_K$ 
by conjugation. It follows that every $\tilde{K_j}$ is abelian.

Any relation in which every term contains a non-trivial subgroup of $\bigC$
is imprimitive by Lemma \ref{lem:deflations}. So, to find
generators of $\Prim(G)$, we will from now on focus our attention on relations
that contain subgroups of $P$
not containing $\centralCp$, or, equivalently,
subgroups $H\in \cH$.

\subsection{Some Brauer relations}\label{sec:somerels}

In this subsection, we define several relations, which will later be shown
to generate $\Prim(G)$.


\begin{lemma}\label{lem:faithfulchar}
Let $H\in\cH$ and let $\phi$ be a faithful irreducible character of $\bigC$.
Then $\Ind^G_{\bigC}\phi$ is irreducible, and any irreducible character
of $G$ whose restriction to $\bigC$ is faithful is of this form. Moreover,
\[
  \langle \Ind_H^G\triv, \Ind^G_{\bigC}\phi \rangle = \frac{|\smallP|}{|H|}.
\]
\end{lemma}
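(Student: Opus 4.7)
The plan is to handle the three assertions in order, all via standard Clifford theory applied to the normal cyclic subgroup $\bigC \normal G$, using the two key facts that $\smallP = G/\bigC$ acts faithfully on $\bigC$ and that $H \cap \bigC = \{1\}$ by definition of $\cH$.

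First I would establish irreducibility of $\Ind_{\bigC}^G \phi$ by computing the stabilizer $S_\phi = \{g \in G : {}^g\phi = \phi\}$. Since $\phi$ is a faithful linear character of the cyclic group $\bigC$, it is injective as a homomorphism $\bigC \to \C^\times$; so ${}^g\phi = \phi$ forces $gxg^{-1} = x$ for all $x \in \bigC$, i.e.\ $g \in C_G(\bigC)$. But the hypothesis that $\smallP$ acts faithfully on $\bigC$ is exactly the statement $C_G(\bigC) = \bigC$. Hence $S_\phi = \bigC$, and the standard Clifford-theoretic criterion (see Remark \ref{rmrk:charSemiDir} applied to the abelian normal subgroup $\bigC$) implies that $\Ind_{\bigC}^G \phi$ is irreducible.

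Next, for the classification statement, let $\rho$ be an irreducible character of $G$ with $\Res_{\bigC}\rho$ faithful. By Clifford's theorem, $\Res_{\bigC}\rho = e\sum_i \phi_i$ with the $\phi_i$ forming a single $G$-orbit of linear characters of $\bigC$; since $\bigC$ is cyclic, $G$-conjugate characters have the same kernel, so faithfulness of $\Res_{\bigC}\rho$ forces each $\phi_i$ to be faithful. Fixing one such $\phi = \phi_1$, the first paragraph shows its stabilizer is $\bigC$, so Clifford theory gives $\rho = \Ind_{\bigC}^G \phi$.

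Finally, for the inner product I would compute by Frobenius reciprocity and Mackey:
\[
\langle \C[G/H], \Ind_{\bigC}^G \phi \rangle_G = \langle \triv_H, \Res_H \Ind_{\bigC}^G \phi \rangle_H.
\]
Because $\bigC$ is normal, Mackey's formula reduces to a sum over $H\backslash G/\bigC$ of terms $\Ind_{H \cap \bigC}^H \Res({}^x\phi)$, and the hypothesis $H \in \cH$ makes each intersection trivial, so every summand equals the regular representation $\C[H]$, contributing $1$ to the inner product. The number of double cosets is $|G|/(|H|\cdot|\bigC|) = |\smallP|/|H|$ since each double coset has size $|H|\cdot|\bigC|$, giving the claimed value.

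The only mildly delicate step is the stabilizer computation in the first paragraph — specifically, pinning down that faithfulness of $\phi$ (as opposed to merely having trivial $G$-stabilizer among \emph{all} characters) together with the faithful $\smallP$-action on $\bigC$ together yield $S_\phi = \bigC$; the rest is a routine unwinding of Mackey and Frobenius.
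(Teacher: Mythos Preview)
Your proposal is correct and follows essentially the same route as the paper. The paper establishes irreducibility by directly computing $\langle \Ind_{\bigC}^G\phi, \Ind_{\bigC}^G\phi\rangle = 1$ via Mackey (using that $\smallP$ acts faithfully on the faithful characters of $\bigC$), whereas you compute the stabiliser $S_\phi = \bigC$ and invoke the Clifford criterion; these are two phrasings of the same computation. The classification and inner-product arguments are identical to the paper's. One small caution: the extension $1\to\bigC\to G\to\smallP\to 1$ is not asserted to be split, so your appeal to Remark~\ref{rmrk:charSemiDir} is slightly imprecise --- but the Mackey/Clifford irreducibility criterion you are actually using holds for any normal abelian subgroup, so the argument goes through unchanged.
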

\begin{proof}
Since $\smallP=G/\bigC$ acts faithfully on $\bigC$, it also acts
faithfully on the faithful characters of $\bigC$. By Mackey's formula,
\[
\langle \Ind^G_{\bigC}\phi,\Ind^G_{\bigC}\phi\rangle=
\langle\phi, \Res^G_{\bigC}\Ind^G_{\bigC}\phi\rangle = \sum_{g\in \bigC\backslash G/\bigC}
\langle\phi, ^g\phi\rangle = 1,
\]
i.e. $\Ind^G_{\bigC}\phi$ is irreducible. Moreover, if $\chi$ is any irreducible
character of $G$ whose restriction to $\bigC$ is faithful, then
by Clifford theory, all irreducible summands of $\Res^G_{\bigC}\chi$ lie in one
orbit under the action of $G$. Since any normal subgroup of $\bigC$ is
characteristic, all $G$-conjugate irreducible characters of $\bigC$
have the same kernel, so all irreducible summands of $\Res^G_{\bigC}\chi$ are
faithful. Thus, if $\phi$ is one of them, then $\chi=\Ind^G_{\bigC}\phi$ by
Frobenius reciprocity and by the first part of the lemma.

The rest of the lemma now follows by Mackey's formula:
\beq
   \langle \Ind_H^G\triv, \Ind_{\bigC}^G\phi \rangle &=&
   \langle \triv, \Res_H\Ind_{\bigC}^G\phi\rangle \\[1pt]
   &=&
   \langle \triv, \sum\limits_{x\in H\backslash G/\bigC}
     \Ind_{{}^x\bigC\cap H}^H \Res_{{}^x\bigC\cap H}{}^x\phi\rangle\cr
   &=& \sum\limits_{x\in H\backslash G/\bigC}
     \langle \triv,\Ind_{\{1\}}^H \Res_{\{1\}}{}^x\phi\rangle \cr
   &=& \sum\limits_{x\in H\backslash G/\bigC}
     \langle \triv,\Ind_{\{1\}}^H \triv\rangle
=   |H\backslash G/\bigC| = \displaystyle\frac{|\smallP|}{|H|}.
\eeq
\end{proof}

\begin{lemma}\label{lem:Clifford}
Let $G$ be any finite group, $N\normal G$ a normal subgroup, and 
$\Theta_0=\sum_{H\in\cH_N} n_H H\in B(G)$. For an element $\Lambda$ of $B(G)$ 
write $\tilde\Lambda\in R_\Q(G)$ for the associated virtual representation.
\begin{enumerate}
\item\label{item:Clifford} For any irreducible character $\phi$ of $G$,
$$
\langle \Ind_N^G\triv,\phi\rangle = \bigleftchoice{\dim\phi}{N\le \ker\phi}{0}{\text{otherwise}}.
$$
\item\label{item:HvsHN} If $\phi$ is an irreducible character of $G$ satisfying $N\le \ker\phi$,
then for every subgroup $H\le G$,
$$
\langle \Ind_H^G\triv,\phi\rangle = \langle \Ind^G_{HN}\triv,\phi\rangle.
$$
In particular, $\langle\phi, \tilde\Lambda-\widetilde{N\Lambda}\rangle=0$
for every $\Lambda\in B(G)$.
\item\label{item:deflations} Let $N_1,\ldots,N_r$ be a collection of
normal subgroups of $G$, and set
$\Theta_i = \Theta_{i-1} \!-\! N_i\Theta_{i-1}$ for $i=1,\ldots,r$.
If $\phi$ is an irreducible character of $G$ whose kernel contains some $N_i$,
then $\langle\tilde\Theta_r,\phi\rangle=0$.
\item\label{item:Ncyclic}
Suppose that $N$ is cyclic.
If $\Theta$ is a relation and $\Theta\equiv\Theta_0\pmod{\cH_N^c}$, then
$\langle\tilde\Theta_0,\phi\rangle=0$
for every
irreducible character $\phi$ of $G$ whose restriction to $N$ is faithful.

\item\label{item:Thetar} Suppose that $N$ is cyclic.
Let $N_1,\ldots,N_r$ and $\Theta_1,\ldots,\Theta_r$ be as in part
(\ref{item:deflations}),
and assume in addition that all $N_i$ are contained in $N$, and that
any normal subgroup of $G$ that
intersects $N$ non-trivially contains some $N_i$. Then
$\langle\tilde\Theta_r,\phi\rangle = \langle \tilde\Theta_0,\phi\rangle$
for every irreducible character $\phi$ of $G$ that is faithful on $N$.
In particular, $\Theta_r$ is a relation if and only if
$\langle \tilde\Theta_0,\phi\rangle=0$ for every such character.
\end{enumerate}
\end{lemma}
\begin{proof}
We implicitly rely on Frobenius reciprocity throughout the proof.
\begin{enumerate}
\item By Clifford theory, $\Res^G_N\phi$ is a sum of irreducible characters
of $N$
that all lie in one $G$-orbit. The claim follows form the fact that the trivial
character is a $G$-orbit in itself.
\item The assumptions imply that the $H$-invariants of the underlying vector
space of $\phi$ is the same as the $HN$-invariants, since the entire vector
space is $N$-invariant.
\item The operators $\Lambda\mapsto \Lambda-N_i\Lambda$ on $B(G)$
commute pairwise.
So $\Theta_r$ is of the form $\Theta-N_i\Theta$ for some
$\Theta\in B(G)$, and the claim follows from part (\ref{item:HvsHN}).

\item Let $\phi$ be faithful on $N$, and let $U\in \cH_N^c$.
Since $N$ is normal in $G$ and cyclic, $U\cap N\neq\{1\}$ is normal in $G$,
so by part (\ref{item:Clifford}), $\langle\Ind_{U\cap N}^G\triv,\phi\rangle =0$.
Also, $\Ind_U^G\triv$ is a direct summand of $\Ind_{U\cap N}^G\triv$,
so $\langle\Ind_{U}^G\triv,\phi\rangle =0$.
It follows that $\langle\tilde\Theta_0,\phi\rangle=
\langle\tilde{\Theta},\phi\rangle =0$.

\item
Suppose $\phi$ is faithful on $N$, and hence on each $N_i$.
Then for any $H\leq G$, $\Ind_{HN_i}^G\triv$ is a direct
summand of $\Ind_{N_i}^G\triv$, and $\langle\Ind_{N_i}^G\triv,\phi\rangle=0$
by part (\ref{item:Clifford}). We deduce that
$\langle\Ind_{HN_i}^G\triv,\phi\rangle=0$, and therefore
$\langle\tilde\Theta_r,\phi\rangle =\langle\tilde\Theta_0,\phi\rangle$,
as claimed.
For the last claim, if $\phi$ is not faithful on $N$ then by assumption,
$\ker\phi$ contains some $N_i$,
and the assertion follows from part (\ref{item:deflations}).
%
\end{enumerate}
\end{proof}

\begin{corollary}\label{cor:relmodC}
Let $H_i\in \cH$ and $\Theta_0 = \sum n_iH_i \in B(G)$.
For $1\le j\le t$ set $\Theta_j = \Theta_{j-1} - C_{l_j}\Theta_{j-1}$,
and set $\Theta_{t+1} = \Theta_t$ if $K$ is trivial and
$\Theta_{t+1} = \Theta_t-\centralCp\Theta_t$ otherwise. In other words,
$$
  \Theta_{t+1} = \sum_i n_i\sum_{U\le \bigC} \mu(|U|)H_iU,
$$
where $\mu$ denotes the Moebius function, and $U$ runs over all
subgroups of $\bigC$.
Then the following are equivalent:
\begin{enumerate}
\item $\Theta_{t+1}$ is a relation.\label{item:thetat}
\item $\sum\frac{n_i}{|H_i|}=0$.\label{item:nis}
\item There exists a relation $\Theta$ such that 
$\Theta\equiv \Theta_0\pmod{\cH^c}$.\label{item:theta}
\end{enumerate}
\end{corollary}
\begin{proof}
For an element $\Lambda$ of $B(G)$, denote its image in $R_\Q(G)$ by $\tilde\Lambda$.

By Lemma \ref{lem:Clifford} (\ref{item:Thetar}), part (\ref{item:thetat}) is
equivalent to the statement that
$\langle\tilde\Theta_{t+1},\phi\rangle=0$ for all irreducible characters
$\phi$ of $G$ that are faithful on $\bigC$. So the equivalence with
(\ref{item:nis}) follows from Lemma \ref{lem:faithfulchar}.

The equivalence of (\ref{item:thetat}) and (\ref{item:theta}) follows from
Lemma \ref{lem:Clifford} (\ref{item:Ncyclic}) and (\ref{item:Thetar}):
indeed, if there exists a relation $\Theta\equiv \Theta_0\pmod{\cH^c}$, then
by Lemma \ref{lem:Clifford} (\ref{item:Ncyclic}),
$\langle\tilde{\Theta}_0,\phi\rangle=0$ for all irreducible characters
$\phi$ of $G$ whose restriction to $\bigC$ is faithful. But then Lemma
\ref{lem:Clifford} (\ref{item:Thetar}) implies that $\Theta_{t+1}$ is a relation.
%
\end{proof}


\begin{corollary}\label{cor:relmodC2}
Let $H_1,H_2\in \cH$.
\begin{enumerate}
\item 
If $|H_1|=|H_2|$, then there is a relation $\Theta\equiv H_1-H_2\pmod{\cH^c}$.
\item
If $|H_2|=p|H_1|$, then there is a relation $\Theta\equiv H_1-pH_2\pmod{\cH^c}$.
\end{enumerate}
\end{corollary}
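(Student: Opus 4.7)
The plan is to apply Proposition \ref{prop:relmodC} directly, specifically the equivalence (\ref{item:nis}) $\Leftrightarrow$ (\ref{item:theta}). In both parts we simply need to exhibit two integers $n_1, n_2$ and subgroups $H_1, H_2 \in \cH$ such that $\sum n_i / |H_i| = 0$; the proposition then produces a relation $\Theta \equiv n_1 H_1 + n_2 H_2 \pmod{\bigC}$.

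For part (1), take $n_1 = 1$, $H_1 = H$, $n_2 = -1$, $H_2 = H'$. Since $|H| = |H'|$, we have $\frac{1}{|H|} - \frac{1}{|H'|} = 0$, so the numerical condition of Proposition \ref{prop:relmodC}(\ref{item:nis}) holds, and hence a relation $\Theta \equiv H - H' \pmod{\bigC}$ exists.

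For part (2), take $n_1 = 1$, $H_1 = H$, $n_2 = -p$, $H_2 = H'$. Since $|H'| = p|H|$, we have
$$
  \frac{1}{|H|} - \frac{p}{|H'|} = \frac{1}{|H|} - \frac{p}{p|H|} = 0,
$$
so again the condition of Proposition \ref{prop:relmodC}(\ref{item:nis}) is satisfied and the desired relation exists.

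There is no real obstacle here: the whole content is in Proposition \ref{prop:relmodC}, which has already transformed the question of existence of a relation with prescribed non-$\bigC$-part into a single rational linear condition on the coefficients. The corollary is just a record of the two simplest instances of this condition that will be used repeatedly in the sequel (pairs of subgroups of equal size, and pairs in which one has $p$ times the size of the other).
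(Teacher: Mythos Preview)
Your proof is correct and is exactly the intended argument: the paper states this corollary without proof immediately after Proposition~\ref{prop:relmodC}, since it is an immediate consequence of the equivalence (\ref{item:nis})~$\Leftrightarrow$~(\ref{item:theta}) applied to the two obvious choices of coefficients.
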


\begin{theorem}\label{thm:sameInters}
Fix an index $1\le j\le t$. 
For subgroups $H_1, H_2\in\cH$ of the same size, 
the following are equivalent:
\begin{enumerate}
\item
There exists a relation $\Theta\equiv H_1-H_2\pmod{\cH^c}$ that is induced
from $C^j\rtimes P$.
\item The element
\[
\sum_{U\le \bigC\atop C_{l_j}\not\le U}\mu(|U|)(H_1U-H_2U)
\]
of $B(G)$ is a relation.
\item $\Res^P_{\tilde{K_j}}(H_1-H_2-\centralCp H_1 + \centralCp H_2)$ 
is a relation.
\item
There exists an element $g\in P$ such that the
intersections $H_1\cap \tilde{K_j}$ and ${}^gH_2\cap \tilde{K_j}$ are contained 
in one another, and
$$
  [N_P(H_1\cap \tilde K_j): H_1\tilde K_j] = [N_P(H_2\cap \tilde K_j): H_2\tilde K_j].
$$
\end{enumerate}
\end{theorem}
\begin{proof}
By Lemma \ref{lem:Clifford} (applied to $G=C^jP, N=C^jC_K, \Lambda=H_1-H_2$),
the statements (1) and (2) are equivalent, and both equivalent
to the condition that
$$
\langle \Ind^G_{H_1}\triv,\chi\rangle= \langle \Ind^G_{H_2}\triv,\chi\rangle
$$
for all irreducible characters $\chi$ of $G$ whose restriction to
$C^jC_K$, equivalently to $C^j\centralCp$, is faithful. 
By Lemma \ref{lem:faithfulchar}, this is
automatically satisfied for those
$\chi$ whose restriction to $\bigC$ is faithful.
Let $\chi$ be an irreducible character of $G$
whose restriction to $\bigC$ has kernel $C_{l_j}$. 
Then by \cite[Theorem 6.11]{Isa},
$$
  \chi = \Ind_{C\tilde{K_j}}^G \rho
$$ 
for some $\rho$; here $C\tilde{K_j}$ is the stabiliser of a constituent of
$\Res_{\bigC}\chi$. Moreover,
$\Res_{\tilde{K_j}}\rho$ is irreducible, faithful on $\centralCp$, and
any irreducible character of $\tilde{K_j}$ that is faithful on $\centralCp$
is of the form $\Res_{\tilde{K_j}}\rho$ for some such $\rho$.
For $H=H_1$ or $H_2$,~we~have
\begin{eqnarray*}
  \langle \chi,\Ind^G_{H}\triv\rangle \!\!\!
  & =\!\!\! & \displaystyle \langle \Ind_{C\rtimes \tilde{K_j}}^G \rho,\Ind^G_{H}\triv\rangle 
   =  \displaystyle \!\!\!\sum_{\scriptscriptstyle C\tilde{K_j}\backslash G/{H}} \langle\rho,\Ind_{C\tilde{K_j}\cap {}^g{H}}^{C\tilde{K_j}}\triv\rangle
  \\
  & =\!\!\! & \displaystyle \!\!\!\sum_{\scriptscriptstyle C\tilde{K_j}\backslash G/{H}}
        \langle\rho,\Ind_{\tilde{K_j}}^{C\tilde{K_j}}\Ind_{\tilde{K_j}\cap {}^g{H}}^{\tilde{K_j}}\triv\rangle
   =  \displaystyle \!\!\!\sum_{\scriptscriptstyle C\tilde{K_j}\backslash G/{H}}
        \langle\Res_{\tilde{K_j}}\rho,\Ind_{\tilde{K_j}\cap {}^g{H}}^{\tilde{K_j}}\triv\rangle
  \\
  & =\!\!\! & \!\!\!\sum_{\scriptscriptstyle \tilde{K_j}\backslash P/{H}}
  \langle\Res_{\tilde{K_j}}\rho,\Ind^{\tilde{K_j}}_{\tilde{K_j}\cap {}^g{H}}\triv\rangle
   =  \langle \Res_{\tilde{K_j}}\rho,\Res^P_{\tilde{K_j}}\Ind_{H}^P\triv\rangle.
\end{eqnarray*}

So (1) and (2) are equivalent to the statement that for
any irreducible character $\phi$ of $\tilde{K_j}$ that is faithful on $\centralCp$,
$\langle \phi,\Res^P_{\tilde{K_j}}\Ind_{H_1}^P\triv\rangle=
\langle \phi,\Res^P_{\tilde{K_j}}\Ind_{H_2}^P\triv\rangle$.
This in turn is equivalent to (3), again by Lemma \ref{lem:Clifford}.

We now prove the equivalence of (1)-(3) to (4).
Let $\phi$ be an irreducible character of $\tilde{K_j}$, 
faithful on $\centralCp$. Its kernel, say $N$, is then necessarily cyclic. 
For $H=H_1$ or $H_2$, by Lemma \ref{lem:Clifford} (\ref{item:Clifford}),
$$
\langle \phi,\Res^P_{\tilde{K_j}}\Ind_H^P\triv\rangle =
\langle \phi,\!\!\sum_{\scriptscriptstyle \tilde{K_j}\backslash P/H}\!\!\Ind_{{}^gH\cap
\tilde{K_j}}^{\tilde{K_j}}\triv\rangle
 =  \#\{g \in P/H\tilde{K_j}\; |\; {}^gH\cap \tilde{K_j} \le N\}.
$$
If ${}^gH\cap \tilde{K_j}\lneq N$ for all $g\in P$, this is 0. Otherwise, 
replace $H$ by some ${}^g{H}$ such that ${}^gH\cap \tilde{K_j}\le N$
(this does not change $\langle \phi,\Res^P_{\tilde{K_j}}\Ind_H^P\triv\rangle$).
We find
\begin{equation}
\label{eq:innerprod2}
\begin{array}{llllll}
\langle \phi,\Res^P_{\tilde{K_j}}\Ind_H^P\triv\rangle &=&
 \#\{g \in P/H\tilde{K_j}\; |\; g \in N_P(H\cap \tilde{K_j})\}\cr
& = & [N_P(H\cap \tilde{K_j}):H\tilde{K_j}].
\end{array}
\end{equation}
This uses the fact that $H\tilde{K_j}$ is contained in
$N_P(H\cap \tilde{K_j})$, since $\tilde{K_j}$ is abelian and therefore
normalises its subgroups, and since it is normal in $P$, so that
$H\cap \tilde{K_j}$ is normal in $H$.

To deduce that (3) implies (4) (or rather the contrapositive),
assume without loss of
generality that $|{H_1}\cap \tilde{K_j}|\geq |{H_2}\cap \tilde{K_j}|$.
Suppose first that no conjugate of
${H_2}\cap \tilde{K_j}$ is contained in ${H_1}\cap \tilde{K_j}$. Saturate
${H_1}\cap \tilde{K_j}$ to a cyclic subgroup $N$ of $\tilde{K_j}$ with
$\tilde{K_j}/N$ cyclic. Then no conjugate of ${H_2}\cap \tilde{K_j}$~is
contained in $N$, so if $\phi$ is an irreducible character of $\tilde{K_j}$
with kernel $N$, then
$$
\langle \phi,\Res^P_{\tilde{K_j}}\Ind_{{H_2}}^P\triv\rangle
=0 \neq \langle \phi,\Res^P_{\tilde{K_j}}\Ind_{H_1}^P\triv\rangle.
$$
If instead 
$[N_P(H_1\cap \tilde K_j): H_1\tilde K_j] \ne [N_P(H_2\cap \tilde K_j): H_2\tilde K_j]$, 
then (\ref{eq:innerprod2}) shows that these inner products are 
not equal whenever they are non-zero.

Conversely, if (4) is satisfied, then above calculation yields
$$
  \langle \phi,\Res^P_{\tilde{K_j}}\Ind_{H_1}^P\triv\rangle
  = \langle \phi,\Res^P_{\tilde{K_j}}\Ind_{H_2}^P\triv\rangle
$$
for all irreducible characters $\phi$ of $\tilde{K_j}$ that are faithful on
$\centralCp$.
\end{proof}

\begin{proposition}\label{prop:cyclicKj}
Suppose that some $K_{j_0}$ is cyclic and let
$\Theta=\sum_{H\le G}n_HH$ be a relation with 
$n_H=0$ for all $H$ that contain $C_{l_{j_0}}\!$. Then
$$
  \sum_{\substack{|H\cap K_{j_0}|\le p^i\\HK_{j_0}=P}} n_H\equiv 0\pmod p
  \qquad \qquad \text{$\forall\, i\ge 0$}.
$$
\begin{proof}
Let $\Ind_{C\rtimes K_{j_0}}^G(\chi\otimes\varphi)$ be an irreducible
character of $G$, where $\chi$ is a one-dimensional
character of $C$ with kernel
$C_{l_{j_0}}$, extended to $C\rtimes K_{j_0}$ as in Remark
\ref{rmrk:charSemiDir}, and $\varphi$ is an irreducible character of $K_{j_0}$.
If $H\le G$ intersects $C^{j_0}$ non-trivially, then by Lemma \ref{lem:Clifford}
(\ref{item:Clifford}),
$$
\langle\Ind_{C\rtimes K_{j_0}}^G(\chi\otimes\varphi),\Ind_H^G\triv\rangle = 0,
$$
while for any $H\le P$,
\beq
\langle\Ind_H^G\triv,\Ind_{C\rtimes K_{j_0}}^G(\chi\otimes\varphi)\rangle & = &
\sum_{g\in K_{j_0}\backslash P/H}\langle\triv_{K_{j_0}\cap {}^gH},
\Res^{K_{j_0}}_{K_{j_0}\cap {}^gH}\varphi\rangle\\
& = & \sum_{g\in K_{j_0}\backslash P/H}\langle\triv_{{}^g(K_{j_0}\cap H)},
\Res^{K_{j_0}}_{{}^g(K_{j_0}\cap H)}\varphi\rangle\\
& = &
\#(K_{j_0}\backslash P/H)\cdot\langle\triv_{K_{j_0}\cap H},\Res^{K_{j_0}}_{K_{j_0}\cap H}\varphi\rangle.
\eeq
The last two equalities follow from the facts that
\begin{enumerate}
\item $K_{j_0}\normal P$,
so that ${}^g(K_{j_0}\cap H)=K_{j_0}\cap {}^gH$ is a subgroup of $K_{j_0}$,
\item $K_{j_0}$ is cyclic, so that ${}^g(K_{j_0}\cap H) = K_{j_0}\cap H$,
since both are subgroups of $K_{j_0}$ of the same order.
\end{enumerate}
So by assumption on $\Theta$, we must have
\begin{eqnarray}\label{eq:innerprod}
\sum_{H\le P}n_H\#(K_{j_0}\backslash P/H)\cdot
\langle\triv_{H\cap K_{j_0}},\Res^{K_{j_0}}_{H\cap K_{j_0}}\varphi\rangle=0
\end{eqnarray}
for any 1-dimensional character $\varphi$ of $K_{j_0}$. By Lemma
\ref{lem:Clifford} (\ref{item:Clifford}),
$$
\langle\triv_{H\cap K_{j_0}},\Res^{K_{j_0}}_{H\cap K_{j_0}}\varphi\rangle =
\leftchoice{1}{H\cap K_{j_0}\le\ker\varphi}{0}{\text{otherwise}}.
$$
Also, $\#(K_{j_0}\backslash P/H)$ is a power of $p$, since $K_{j_0}$ is normal
in $P$, and it is equal to 1 if and only if
$HK_{j_0}=P$. The result now follows by considering equation (\ref{eq:innerprod})
modulo $p$ for $\varphi$ with increasing kernels.
\end{proof}
\end{proposition}

\begin{proposition}\label{prop:Ptoosmall}
The following conditions are equivalent:
\begin{enumerate}
\item $P/K$ is generated by exactly $t$ elements;
\item $K_j\supsetneq K$ for $1\le j\le t$;
\item $P/K$ acts faithfully on $C$ but does not act faithfully on
any maximal proper subgroup of $C$.
\end{enumerate}
Moreover, if $G$ does not satisfy these conditions, then $\Prim(G)=0$.
\end{proposition}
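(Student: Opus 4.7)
The plan is to first establish the equivalence of conditions (1)--(3), then deduce the absence of primitive relations when these fail. For the equivalence, I observe that $\bar P := P/K$ embeds into $\prod_{j=1}^t A_j$, where $A_j$ is the image of $P$ in $\Aut C_{l_j}$, a cyclic $p$-group. The subgroup $K_j/K \le \bar P$ coincides with the kernel of the projection $\bar P \to \prod_{i \ne j} A_i$, equivalently with $\bar P \cap A_j$ inside $\prod_k A_k$. If every $K_j/K \ne 0$, then $\bar P$ contains the unique order-$p$ subgroup of each cyclic $A_j$, so $\bar P \supseteq (C_p)^t$ and has rank exactly $t$; conversely, rank $t \ge 2$ forces $\bar P$ to be non-cyclic, ruling out injectivity of any projection to a cyclic $A_j$. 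This proves (1) $\iff$ (2); (2) $\iff$ (3) is immediate since the maximal proper subgroups of $C$ are the $C^j$ and $\bar P$ acts faithfully on $C^j$ iff $K_j = K$.

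For the main implication, I assume without loss of generality that $K_1 = K$. If $P$ acts trivially on $C_{l_1}$ (automatic when $t = 1$), then $G = C_{l_1} \times G_2$ with $G_2 = C^1 \rtimes P$, and the tensor factorisation $\chi = \chi_1 \otimes \chi_2$ of irreducible characters of a direct product splits any relation $\Theta$ as $\Theta_a + \Theta_b$, where $\Theta_a$ (supported on subgroups contained in $G_2$) is induced from the proper subgroup $G_2$, while $\Theta_b$ (supported on subgroups of the form $C_{l_1} \times H_0$) is lifted from the proper quotient $G/C_{l_1} \cong G_2$. Both are imprimitive.

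The remaining case is $t \ge 2$ with $K_{(1)} := \ker(P \to \Aut C_{l_1})$ a proper subgroup of $P$. Set $D := C_{l_1} \rtimes P$, a proper subgroup of $G$ of index $|C^1| > 1$; a subgroup $H \le G$ is (up to $G$-conjugacy) contained in $D$ precisely when $H \cap C \le C_{l_1}$, i.e.\ when $H$ contains no $C_{l_j}$ for $j \ge 2$. The strategy is to exhibit a generating set of $K(G)$ consisting of imprimitive relations of two types: Artin-type relations for non-cyclic subgroups of the proper subgroups $D_j := C_{l_j} \rtimes P$ (for $j = 1, \ldots, t$), each induced from $D_j$; and relations lifted from the proper quotients $G/C_{l_j}$ (for $j = 1, \ldots, t$), supported on subgroups containing $C_{l_j}$. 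Every non-cyclic proper subgroup of $G$ is of the form $C^S \rtimes Q$ for some $S \subseteq \{1, \ldots, t\}$ and $Q \le P$, and when $|S| \le 1$ the corresponding Artin relation lies inside some $V_{D_j}$ directly.

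The main obstacle I expect is handling non-cyclic subgroups $H = C^S \rtimes Q$ with $|S| \ge 2$, which are contained in no single $D_j$: the Artin relations from these ``mixed'' subgroups must be decomposed as integer combinations of the two types of imprimitive relations above. I expect this to require a careful iterative argument using Proposition \ref{prop:relmodC} together with Lemma \ref{lem:sameInters}, peeling off one prime factor of $C^S$ at a time via lifts from the quotients $G/C_{l_j}$ (imprimitive by definition) and inductions from the subgroups $D_j$, while keeping track of integer coefficients throughout; the fact that the condition $K_1 = K$ passes to each $G/C_{l_j}$ for $j \ge 2$ (since $K_j^G$ is already contained in $K_{(1)}^G$) provides the structural anchor needed to keep each stage of the reduction consistent.
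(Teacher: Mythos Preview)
Your equivalence argument and case~(a) are fine. The gap is in case~(b), and the plan you sketch for closing it does not work.

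You set up the reduction around the subgroups $D_j=C_{l_j}\rtimes P$ (keeping one prime at a time), but the right object is the single subgroup $G_0=C^{j_0}\rtimes P$ obtained by \emph{dropping} the prime $l_{j_0}$ with $K_{j_0}=K$. The point of that hypothesis is exactly that the kernel of $P$ acting on $C^{j_0}$ is still $K$; hence $G_0$ has the same set $\cH$ and the same criterion $\sum n_i/|H_i|=0$ from Proposition~\ref{prop:relmodC}. So the $\cH$-part of any $G$-relation is also the $\cH$-part of a $G_0$-relation, whose induction to $G$ is imprimitive; the difference has every term meeting $\bigC$ non-trivially and is imprimitive by Lemma~\ref{lem:deflations}. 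That is the entire argument, and it subsumes your case~(a).

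Your alternative --- iterating through the quotients $G/C_{l_j}$ and asserting that ``$K_1=K$ passes to each $G/C_{l_j}$ for $j\ge 2$'' --- is false in general. In $G/C_{l_j}$ the new kernel is $K_j$ and the new $K_1$ is $\bigcap_{i\neq 1,j}K_{(i)}$; your parenthetical $K_j\le K_{(1)}$ does not bound this larger intersection. For instance, take $t=3$, $P=\langle a,b\rangle\cong C_p\times C_p$, and $K_{(1)}=\langle a\rangle$, $K_{(2)}=\langle b\rangle$, $K_{(3)}=\langle ab\rangle$; then $K=1=K_1$, but in $G/C_{l_2}$ both the new $K_1=\langle ab\rangle$ and the new $K_3=\langle a\rangle$ strictly contain the new kernel $K_2=1$. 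Thus $G/C_{l_2}$ \emph{satisfies} the proposition's conditions and may well have primitive relations, so no inductive reduction through it is available. The fix is not to iterate on quotients but to pass once to the subgroup $G_0=C^1\rtimes P$.
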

\begin{proof}
The equivalence of (2) and (3) is clear. Suppose that for some $j$, $K_j=K$.
Then $P/K=P/K_j$ injects into $\Aut(C^j)$, which has rank $t-1$, so $P/K$ is
generated by less than $t$ elements. Conversely, if $K_j\gneq K$ for all~$j$, 
then any set of elements $\{g_r\}$, $g_r\in K_{j_r}\backslash K$,
generates a group of rank $t$ in $\Aut(C)$, since each $g_r$ acts non-trivially
on $C_{j_r}$ and trivially on $C^{j_r}$. So $P/K\le \Aut(C)$ cannot be
generated by less than $t$ elements.

Suppose that $G$ does not satisfy these conditions, let $j_0$ be such that
$K_{j_0} = K$, or equivalently that $P/K$ acts faithfully on $C^{j_0}$.
Then, $G_0=C^{j_0}\rtimes P$
satisfies Assumption \ref{not:quasielem}, so Corollary \ref{cor:relmodC}
applies to both $G$ and $G_0$. Thus there
exists a $G$-relation $\Theta\equiv \sum n_iH_i\pmod{\cH^c}$ if and only if
there exists a $G_0$-relation $\Theta\equiv \sum n_iH_i\pmod{\cH^c_{\centralCp C^{j_0}}}$,
which can then be induced to an imprimitive $G$-relation. Here
$\centralCp C^{j_0}$ is considered as a normal subgroup of $G^0$.
So all occurrences of
$H\in \cH$ in any $G$-relation can be replaced by groups intersecting
$\bigC$ non-trivially using imprimitive relations, so $G$ has no primitive relations.
\end{proof}

\subsection{Primitive relations with trivial $K$}\label{sec:Ktriv}

As before, we have $G=C\rtimes P$, where $C$ is a cyclic group of order
$l_1\cdots l_t$ for distinct primes $l_i\ne p$, and $P$ is
a $p$-group. Assume throughout this subsection that $K=\{1\}$, that is
$P$ acts faithfully on $C$. In particular, $P$ is abelian and
its $p$-torsion is an elementary abelian $p$-group of rank at most $t$.

If $t=1$, then $\Prim(G)$ has been described in Proposition \ref{prop:G20},
so we assume for the rest of the subsection that $t>1$.
Define $\cM$ to be the set of all index $p$ subgroups of $P$.
For each $M\in \cM$, define the {\em signature} of $M$ to be the vector in
$\mathbb{F}_2^t$ whose $j$-th coordinate is 1 if $K_j\subseteq M$ and 0
otherwise.

\begin{proposition}\label{prop:condsKtriv}
The following properties of $G$ are equivalent:
\begin{enumerate}
\item All $K_j=\bigcap_{i\neq j}\ker(P\rightarrow \Aut C_{l_i})$ have the
same, non-trivial image 
in the Frattini quotient $P/\Phi(P)$ of $P$.
\item Each subgroup of $P$ of index $p$ contains either every $K_j$ or none,
and both cases occur.
In other words, the set of signatures of elements of $\cM$ is $\{(1,\ldots,1),
(0,\ldots,0)\}$. 
\end{enumerate}
\end{proposition}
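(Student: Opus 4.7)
The plan is to translate the entire statement into linear algebra over the $\mathbb{F}_p$-vector space $V := P/\Phi(P)$. By Burnside's basis theorem, the maximal (hence index-$p$) subgroups of the $p$-group $P$ correspond bijectively with the hyperplanes of $V$: an index-$p$ subgroup $M$ contains a subgroup $K_j \le P$ if and only if the hyperplane $M/\Phi(P)$ contains the image $\bar{K}_j := K_j\Phi(P)/\Phi(P)$. Thus the signature of $M$ is $(1,\ldots,1)$ iff the corresponding hyperplane contains every $\bar{K}_j$, and $(0,\ldots,0)$ iff it contains none of them.

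For $(1) \Rightarrow (2)$, I would argue as follows. Suppose all $\bar{K}_j$ equal a common non-zero subspace $L \subseteq V$. Then for any hyperplane $H$, either $L \subseteq H$ (giving signature $(1,\ldots,1)$) or $L \not\subseteq H$ (giving $(0,\ldots,0)$), so only these two signatures occur. For both to be realised, I would exhibit hyperplanes of each kind: pick any non-zero $v \in L$ and extend to a hyperplane missing $v$ to get $(0,\ldots,0)$; for $(1,\ldots,1)$, a hyperplane containing $L$ exists iff $L \neq V$. The only way $L = V$ is if $K_j\Phi(P) = P$ for every $j$, which by Burnside's basis theorem forces $K_j = P$ for all $j$, meaning $P$ acts trivially on every $C^j$ and hence on $C$ (since $t > 1$), contradicting $K = \{1\}$.

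For $(2) \Rightarrow (1)$, assume every signature is $(1,\ldots,1)$ or $(0,\ldots,0)$, both occurring. I would first show all $\bar{K}_j$ coincide: if $\bar{K}_{j_1} \neq \bar{K}_{j_2}$, choose (without loss of generality) a vector $v \in \bar{K}_{j_1} \setminus \bar{K}_{j_2}$ and construct a hyperplane $H$ with $\bar{K}_{j_2} \subseteq H$ and $v \notin H$; concretely, pick a complement $W$ of $\langle v\rangle \oplus \bar{K}_{j_2}$ in $V$ and set $H = W \oplus \bar{K}_{j_2}$. This hyperplane has signature with a $1$ in the $j_2$-slot and a $0$ in the $j_1$-slot, contradiction. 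Writing $L$ for the common value, if $L = 0$ then every hyperplane contains every $\bar{K}_j$, so only the signature $(1,\ldots,1)$ is realised, contradicting the requirement that both signatures occur. Hence $L$ is non-trivial.

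The only mildly subtle point is ensuring the hyperplane $H$ in the $(2) \Rightarrow (1)$ direction exists for arbitrary dimensions of $\bar{K}_{j_2}$; but the construction via a complement of $\langle v, \bar{K}_{j_2}\rangle$ works uniformly, so there is no real obstacle. Everything else reduces to elementary projective-geometric observations about hyperplanes and subspaces in $V$, together with Burnside's basis theorem.
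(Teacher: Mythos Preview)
Your proof is correct and follows essentially the same approach as the paper: both translate the question to the $\mathbb{F}_p$-vector space $P/\Phi(P)$ via Burnside's basis theorem and argue with hyperplanes versus the images $\bar K_j$. Your version is simply more explicit, spelling out the existence of separating hyperplanes and the argument ruling out $L=V$, whereas the paper compresses this into two sentences.
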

\begin{proof}
Note that, as $P$ acts faithfully on $C$, some $K_j$ is not the whole of~$P$, 
and so some $M\in{\mathcal M}$ has a non-zero signature. 
A subgroup $K_j$ has trivial image in $P/\Phi(P)$ if and only if it is
contained in all maximal proper subgroups of $P$ if and only if the
signatures of all $M\in\cM$ have a 1 in the $j$-th coordinate. Moreover,
$K_{1}$, $K_{2}$, say, have different non-trivial images in the Frattini
quotient if and only if there are two hyperplanes in $P/\Phi(P)$ containing
one but not the other if and only if there are two subgroups in $\cM$
with signatures $(1,0,\ldots)$ and $(0,1,\ldots)$.
\end{proof}

\begin{theorem}\label{thm:Ktriv}
The group $G$ has a primitive relation if and only if $G$ satisfies the
equivalent conditions of Proposition \ref{prop:condsKtriv}. If it does,
then $\Prim(G)\iso C_p$ and is generated by the relation
$$
  \sum_{U\le C} \mu(|U|)(MU-M'U),
$$
where $M,M'\in \cM$ have signatures $(1,\ldots,1)$ and $(0,\ldots,0)$,
respectively.
\end{theorem}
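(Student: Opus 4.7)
The plan is to identify generators of $\Prim(G)$ among the candidates $\Theta_{M,M'}:=\sum_{U\le C}\mu(|U|)(MU-M'U)$ for $M, M'\le P$ of index $p$, and then to determine primitivity of these generators in terms of the signatures of $M$ and $M'$. Since $K=\{1\}$, the group $P$ embeds in $\Aut(C)=\prod_j\bF_{l_j}^\times$ and is therefore abelian, and $\bigC=C$. By Proposition~\ref{prop:Ptoosmall}, if some $K_j=K=\{1\}$ then $G$ has no primitive relations; at the same time a $K_j=\{1\}$ has trivial image in $P/\Phi(P)$, so the condition of Proposition~\ref{prop:condsKtriv} fails as well. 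Hence in both directions of the biconditional we may assume each $K_j\ne\{1\}$.

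For the reduction to index-$p$ subgroups, I combine Proposition~\ref{prop:relmodC} with Proposition~\ref{prop:biggroup}. By Proposition~\ref{prop:relmodC} and its immediate corollary, every mod-$\bigC$ relation is a $\Z$-combination of the basic classes $H-H'$ (with $|H|=|H'|$) and $H-pH'$ (with $|H'|=p|H|$) for $H,H'\in\cH$, and since $P$ is abelian, $\cH$ is parametrised by subgroups of $P$. Applied with $D=CS$ for $S\lneq P$, Proposition~\ref{prop:biggroup} shows that a basic class whose constituent subgroups both lie in a common proper subgroup of $P$ yields, after M\"obius expansion, an imprimitive $G$-relation. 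In particular, for any $H\le P$ of index $>p$, choosing $H'\supset H$ of index $[P:H]/p$ makes $H-pH'$ a basic class whose full terms all lie in $CH'\lneq G$ and hence produces an imprimitive relation; iterating replaces all higher-index subgroups by index-$p$ ones, reducing every relation, modulo imprimitive ones, to a $\Z$-combination of $\Theta_{M,M'}$ for $M,M'\le P$ of index $p$.

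The heart of the argument is the criterion that $\Theta_{M,M'}$ is imprimitive iff the signatures of $M$ and $M'$ agree in some coordinate $j$. For the ``if'' direction, when $K_j\le M\cap M'$ I pass to a subquotient of $G$ in which $K_j$ becomes trivial and produce an imprimitive relation in the mod-$\bigC$ class $M-M'$ via Lemma~\ref{lem:sameInters} in that subquotient, then inflate; when $K_j$ is contained in neither $M$ nor $M'$, the converse direction of Lemma~\ref{lem:sameInters} directly provides a representative of $M-M'$ induced from $C^j\rtimes P\lneq G$, which is imprimitive. For the ``only if'' direction, suppose the signatures disagree in every coordinate; then the converse in Lemma~\ref{lem:sameInters} rules out any representative of $M-M'$ induced from $C^j\rtimes P$ for any $j$, and Proposition~\ref{prop:cyclicKj} (applicable because each $K_j$ is cyclic, $P$ being abelian with $K_j/K\injects\bF_{l_j}^\times$) rules out the remaining imprimitive decompositions, so $\Theta_{M,M'}$ is primitive.

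Combining these, $G$ has a primitive relation iff there exist index-$p$ subgroups $M,M'$ whose signatures disagree in every coordinate, which by Proposition~\ref{prop:condsKtriv} is equivalent to the stated condition on images in $P/\Phi(P)$. Under this condition only the signatures $(1,\ldots,1)$ and $(0,\ldots,0)$ occur (and both do occur), and any two valid pairs $(M,M')$ produce imprimitively-equivalent generators via the ``if'' direction applied to same-signature pairs, so $\Prim(G)$ is cyclic. Finally, for any $H\le M\cap M'$ of index $p$ in each (possibly $H=\{1\}$ when $|P|=p^2$), the basic relations $H-pM$ and $H-pM'$ are both imprimitive by Proposition~\ref{prop:biggroup} (their full terms lie in $CM$ and $CM'$ respectively), and their difference modulo $\bigC$ equals $-p(M-M')$; hence $p\Theta_{M,M'}$ is imprimitive and $\Prim(G)\cong\Z/p\Z$. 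The principal obstacle is the imprimitivity criterion of the third paragraph: Lemma~\ref{lem:sameInters}, Proposition~\ref{prop:cyclicKj} and the M\"obius bookkeeping must be arranged so that the signature data precisely controls imprimitivity in both directions, with no stray imprimitive decomposition leaking through in the ``only if'' case.
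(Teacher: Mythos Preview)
Your central criterion---that $\Theta_{M,M'}$ is imprimitive if and only if the signatures of $M$ and $M'$ agree in some coordinate---is false, and this breaks the argument. Take $t=2$ with $M$ of signature $(0,1)$ and $M'$ of signature $(1,0)$: the signatures disagree everywhere, yet $\Theta_{M,M'}$ is imprimitive. The paper handles exactly this case by a triangulation step you are missing: one constructs $M''=\langle M\cap M',\,gh\rangle\in\cM$ (with $g\in K_1\setminus M$, $h\in K_2\setminus M'$) of signature $(0,0)$, so that $\Theta_{M,M'}\equiv\Theta_{M,M''}+\Theta_{M'',M'}$, each summand now imprimitive because its two signatures agree in one coordinate. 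Consequently your conclusion that ``$G$ has a primitive relation iff there exist $M,M'$ whose signatures disagree in every coordinate'' is strictly weaker than, and not equivalent to, the condition of Proposition~\ref{prop:condsKtriv}, which requires that the \emph{only} signatures arising are $(0,\ldots,0)$ and $(1,\ldots,1)$.

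Your ``only if'' direction via Proposition~\ref{prop:cyclicKj} cannot succeed without that extra hypothesis. In the example above, the imprimitive summand induced from $C^1\rtimes P$ is $\equiv M-M''\pmod C$, and both $M$ and $M''$ have $j=1$ entry equal to $0$; the congruence of Proposition~\ref{prop:cyclicKj} then reads $1-1\equiv 0\pmod p$ and yields no obstruction. The paper's primitivity proof uses precisely the ``no mixed signatures'' hypothesis to identify ``$j$th entry is $0$'' with ``signature $(0,\ldots,0)$'', and only then does Proposition~\ref{prop:cyclicKj} force a contradiction. There are also smaller gaps: your reduction in the second paragraph does not explain how to eliminate the coefficient of $P$ itself (the paper does this via Example~\ref{exer:G20} in the quotient $C_{l_1}\rtimes P/K_1$, using that the coefficient of $P$ is divisible by $p$), and in the ``if'' direction you mean the forward direction of Lemma~\ref{lem:sameInters}, not its converse.
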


The proof will proceed in several lemmata.

\begin{lemma}\label{lem:generators}
The group $\Prim(G)$ is generated by relations
of the form $\Theta \equiv M-M'\pmod{\cH_C^c}$, for $M,M'\in \cM$.
\end{lemma}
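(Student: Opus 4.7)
The plan is to work modulo imprimitive relations and show every Brauer relation in $G$ is equivalent to a $\Z$-linear combination of relations of the form $\Theta \equiv M - M' \pmod C$ for $M,M'\in\cM$ (extended via Proposition~\ref{prop:relmodC}). The argument proceeds in three stages: first strip the ``$C$-part'' of a relation, then absorb proper subgroups of $P$ into elements of $\cM$, and finally eliminate the residual $P$-contribution. The last stage is the substantive one.

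Given any $G$-relation $\Theta = \sum n_H H$, Proposition~\ref{prop:relmodC} applied to the coefficients of its $\cH$-terms produces the canonical extension $\Theta_{t+1} = \sum_{H\in\cH} n_H \sum_{U\le C}\mu(|U|)HU$, which is again a $G$-relation with the same $\cH$-part. The difference $\Theta - \Theta_{t+1}$ has trivial $\cH$-support, so every term contains a non-trivial subgroup of the cyclic normal $C$ and is imprimitive by Lemma~\ref{lem:deflations}. Next, for each $H\in\cH$ strictly contained (up to conjugacy) in $P$, pick $M\in\cM$ with $H\le M$: the pair $(n_H,n_M) = (1,-[M:H])$ satisfies the constraint of Proposition~\ref{prop:relmodC}, giving a $G$-relation $\Theta^{H,M}$ whose terms all lie inside $C\rtimes M\lneq G$ and is therefore imprimitive by Proposition~\ref{prop:biggroup}. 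Using $\Theta^{H,M}$ we iteratively replace $n_H H$ by $n_H[M:H]M$. After this reduction $\Theta$ is equivalent modulo imprimitive relations to $\sum_{M\in\cM} n_M\Theta^M + n_P\Theta^P$ with $\Theta^H := \sum_U\mu(|U|)HU$ and $n_P = -p\sum_M n_M$; fixing any $M_0\in\cM$, this rewrites as $\sum_M n_M(\Theta^M-\Theta^{M_0}) + (\sum_M n_M)(\Theta^{M_0}-p\Theta^P)$, where the first sum is a combination of $M-M'$ relations. Thus everything reduces to showing that $\Theta^{M_0,P} := \Theta^{M_0} - p\Theta^P$ is imprimitive for a suitable $M_0$.

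To choose $M_0$, I would use the hypothesis $K=\{1\}$ to locate an index $j$ and $M_0\in\cM$ with $K_j\le M_0$. Since $K = \bigcap K_j = \{1\}$, each $K_j$ embeds into the cyclic group $\Aut(C_{l_j})$ and is therefore cyclic; were no such pair to exist, every $K_j$ would surject onto the Frattini quotient $P/\Phi(P)$, forcing $P$ cyclic and then $K_j = P$ for all $j$, whence $K = P \ne \{1\}$, a contradiction (the degenerate case $P=\{1\}$ is trivial).

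The hard part will be establishing the imprimitivity of $\Theta^{M_0,P}$ for this choice. The plan is to split
\[
\Theta^{M_0,P} \;=\; \sum_{V\le C^j}\mu(|V|)(M_0V-pPV) \;+\; \sum_{V\supseteq C_{l_j}}\mu(|V|)(M_0V-pPV) \;=:\; \tilde\Theta_j + \Lambda_j.
\]
All terms of $\Lambda_j$ contain $C_{l_j}\normal G$, so it suffices to verify that $\Lambda_j$ descends to a relation in $B(G/C_{l_j}) \cong B(C^j\rtimes P)$: granting this, $\Lambda_j$ is lifted and $\tilde\Theta_j = \Theta^{M_0,P}-\Lambda_j$ is a genuine $G$-relation supported inside the proper subgroup $C^j\rtimes P$, hence imprimitive by Proposition~\ref{prop:biggroup}. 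The descent is a direct character computation in $C^j\rtimes P$ in the style of Remark~\ref{rmrk:charSemiDir}: for each irreducible $\rho = \Ind^{C^j\rtimes P}_{C^j\rtimes P_\chi}(\chi\otimes\sigma)$, Möbius inversion on $V\le\ker\chi$ kills the pairing whenever $\chi$ is not faithful on $C^j$, while for $\chi$ faithful the pairing collapses to $\langle\sigma,\Res_{K_j}(\Ind_{M_0}^P\triv - p\triv)\rangle_{K_j}$, which vanishes exactly because $K_j\le M_0$ forces $\Res_{K_j}\Ind_{M_0}^P\triv = p\cdot\triv$. This character-theoretic verification (plus the bookkeeping that $\tilde\Theta_j$ is a bona fide $G$-relation) is the only nontrivial content of the proof; the rest is an application of Propositions~\ref{prop:relmodC} and~\ref{prop:biggroup} together with Lemma~\ref{lem:deflations}.
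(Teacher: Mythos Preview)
Your proof is correct. The overall architecture matches the paper's---strip the $C$-part, push proper subgroups of $P$ up into $\cM$, then eliminate the $P$-contribution---but the execution of the last two steps differs.

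For step 2, the paper first moves $H$ sideways to an $H'$ of the same size lying in a chosen chain below $M$ (using Proposition~\ref{prop:relmodC} and $\langle H,H'\rangle\le M$), and then climbs that chain one index-$p$ step at a time via Example~\ref{exer:G20}. Your single-jump replacement $H\mapsto[M:H]M$ using Proposition~\ref{prop:relmodC} directly is cleaner and avoids the chain.

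The real divergence is in step 3. The paper eliminates $pP$ by quoting Example~\ref{exer:G20} from a subquotient $C_{l_1}\rtimes(P/\ker(P\to\Aut C_{l_1}))$, which yields an imprimitive relation $\equiv M-pP\pmod C$; Proposition~\ref{prop:Ptoosmall} guarantees this subquotient is nontrivial. You instead take the canonical relation $\Theta^{M_0,P}=\sum_{U\le C}\mu(|U|)(M_0U-pPU)$ and split it explicitly as $\tilde\Theta_j+\Lambda_j$, where $\tilde\Theta_j$ lives inside $C^j\rtimes P$ and $\Lambda_j$ is lifted from $G/C_{l_j}$. The Mackey--Frobenius computation you sketch is correct: for non-faithful $\chi$ the M\"obius cancellation over pairs $V\leftrightarrow VC_{l_i}$ with $C_{l_i}\le\ker\chi$ kills everything, while for faithful $\chi$ one has $P_\chi=K_j$ and only $V=\{1\}$ contributes, reducing to $\langle\sigma,\Res_{K_j}(\Ind_{M_0}^P\triv-p\triv)\rangle_{K_j}=0$ precisely because $K_j\le M_0$. (You might mention explicitly that the $V\ne\{1\}$ terms vanish for faithful $\chi$, though this is straightforward.) Your existence argument for $(j,M_0)$ with $K_j\le M_0$ is essentially the contrapositive of the same obstruction that the paper handles by citing Proposition~\ref{prop:Ptoosmall}.

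Note that your character computation is genuinely needed: in $C^j\rtimes P$ the kernel of the $P$-action is $K_j$, which is typically nontrivial, so $P\notin\cH'$ and one cannot simply invoke Proposition~\ref{prop:relmodC} there. The paper sidesteps this by reusing Example~\ref{exer:G20}; your route is more self-contained but more computational.
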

\begin{proof}
If a relation contains no subgroup of $P$, then it is imprimitive by
Lemma \ref{lem:deflations}.
Let $\Theta = n_H H +\ldots$ be any relation
with $H\le P$ of index at least
$p^2$. Pick $M\in\cM$ that contains $H$. Filter $M$ by a chain of subgroups, each of
index $p$ in the previous, such that at each step, the image in some
$\Aut(C_{l_j})$ decreases. By Corollary \ref{cor:relmodC2}, we can
replace $H$ by a subgroup $H'$ in this chain and by subgroups intersecting $C$
non-trivially, adding the relation $\Theta_{t+1}$
from Corollary \ref{cor:relmodC}. Moreover, the added relation is induced from 
a subgroup (since $\langle H,H'\rangle\!\le\!M\!<\!P$), so the class in $\Prim(G)$
is unchanged. Next, we claim that each subgroup in the chain can
be replaced by (an integer multiple of) its supergroup in the chain and by
elements of $\cH_C^c$, using an imprimitive relation.
Let $H'\le H$ be an index $p$ subgroup 
such that $\Im(H\rightarrow \Aut C_{l_j})\neq 
\Im(H'\rightarrow \Aut C_{l_j})$ for some $j$. Then, the subgroup
$C_{l_j}\rtimes H/\ker(H\rightarrow\Aut C_{l_j})$ is a group of the form
discussed in Example \ref{exer:G20}, with $H'$ corresponding to $\tilde{H}$
in that example. Lifting the relation of that example from the quotient,
$H'$ can be replaced by $p\cdot H$, as claimed. So, in summary,
we can replace any $H\lneq P$ by elements of $\cM$ and subgroups
intersecting $C$ non-trivially, without changing the class in $\Prim(G)$.

Also, by Corollary \ref{cor:relmodC}, the coefficient of $P$ in any
relation is divisible by~$p$. So we can replace $P$ by a subgroup in 
${\mathcal M}$, again using the relation of Example \ref{exer:G20}, 
induced from the subquotient $C_{l_1}\rtimes P/\ker(P\rightarrow \Aut C_{l_1})$ 
(by Proposition \ref{prop:Ptoosmall}, we may assume that 
$\ker(P\rightarrow \Aut C_{l_1})\neq P$).

We have thus shown that we can replace any subgroup of $P$ by a subgroup
in $\cM$, without changing the class in $\Prim(G)$. Finally, by
using relations $\Theta\equiv M-M'\pmod {\cH^c_C}$, we can replace all subgroups
in $\cM$ by one of them. But the coefficient of this one must be zero
by Corollary \ref{cor:relmodC}, so the resulting relation is imprimitive.
Thus, $\Prim(G)$ is generated by relations $\Theta\equiv M-M'\pmod{\cH^c_C}$, as
claimed.
\end{proof}

\begin{lemma}
Let $\Theta$ be a relation of the form $\Theta\equiv M-M'\pmod{\cH_C^c}$
with $M,M'\in \cM$. Then its order in $\Prim(G)$ divides $p$.
\end{lemma}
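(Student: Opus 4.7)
The strategy is to exhibit $p\Theta$ as a sum of imprimitive relations. If $M=M'$ then $\Theta=0$ and there is nothing to prove. Otherwise $M,M'\le P$ are distinct subgroups of index $p$ in the $p$-group $P$, so $MM'=P$ and $L:=M\cap M'$ has index $p$ in both $M$ and $M'$.

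Since $|M|=p|L|$, the numerical condition $p/|M|-1/|L|=0$ of Proposition~\ref{prop:relmodC} is satisfied, and that proposition produces a $G$-relation
$$
  \Xi_M \;=\; \sum_{U\le C}\mu(|U|)\bigl(pMU-LU\bigr) \;\equiv\; pM-L\pmod{C},
$$
and analogously $\Xi_{M'}\equiv pM'-L\pmod{C}$; the common choice $L=M\cap M'$ is important so that the $L$-pieces will cancel in $\Xi_M-\Xi_{M'}$. Every term of $\Xi_M$ is of the form $HU$ with $H\in\{M,L\}$ and $U\le C$; since $L\le M$, each such subgroup sits inside the proper subgroup $MC\lneq G$. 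Because $G$ is $p$-quasi-elementary, Proposition~\ref{prop:biggroup} implies that $\Xi_M$ is imprimitive, and the same argument gives imprimitivity of~$\Xi_{M'}$.

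Finally, $p\Theta$ and $\Xi_M-\Xi_{M'}$ have the same $\pmod{C}$ part, namely $p(M-M')$, so the $G$-relation $p\Theta-(\Xi_M-\Xi_{M'})$ involves only terms $HU$ with a nontrivial $U\le C$; each such $U$ is characteristic in $C$ hence normal in~$G$, and Lemma~\ref{lem:deflations} identifies this residual relation as imprimitive. Combining, $p\Theta$ is imprimitive, so its class in $\Prim(G)$ vanishes.

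The main obstacle is spotting the auxiliary relation~$\Xi_M$. Its $\pmod{C}$ pattern $pM-L$ has to satisfy two competing demands: the Möbius-theoretic criterion of Proposition~\ref{prop:relmodC}, so that it really arises from a $G$-relation, and containment in a proper subgroup of~$G$, so that Proposition~\ref{prop:biggroup} can be invoked. Taking $[M:L]=p$ is exactly what secures both, and the common choice $L=M\cap M'$ for $M$ and $M'$ is what makes $\Xi_M-\Xi_{M'}$ recover $p\Theta\pmod{C}$ without spurious leftover terms.
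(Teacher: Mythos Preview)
Your proof is correct. The approach differs from the paper's, though both are short. The paper replaces $pM$ using the relation of Example~\ref{exer:G20} lifted from a subquotient of the form $C_{l_j}\rtimes(M/\ker(M\to\Aut C_{l_j}))$, and similarly for $pM'$; the resulting smaller subgroups of $P$ are then merged into a single one by the chaining/filtration argument of Lemma~\ref{lem:generators}, after which Proposition~\ref{prop:relmodC} forces the remaining coefficient to vanish. Your argument instead builds the single auxiliary relation $\Xi_M\equiv pM-L\pmod{C}$ directly from Proposition~\ref{prop:relmodC} and observes that all of its terms lie in the proper subgroup $MC$, so imprimitivity follows at once from Proposition~\ref{prop:biggroup}; the common choice $L=M\cap M'$ makes $\Xi_M-\Xi_{M'}$ match $p\Theta\pmod{C}$ without further manipulation. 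Your route avoids the appeal to Example~\ref{exer:G20} and the filtration step, at the cost of invoking Proposition~\ref{prop:biggroup} explicitly; the paper's route recycles the machinery just set up in the preceding lemma.
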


\begin{proof}
Any occurrence of $pM$ in a relation can be
replaced by a proper subgroup of $M$ and groups intersecting $C$,
using the relation from
Example \ref{exer:G20}, and similarly for $M'$. Next, these strictly
smaller groups can all be replaced by one group of the same size,
as in the proof of Lemma \ref{lem:generators}, using imprimitive relations.
The resulting relation is $\equiv 0 \pmod{\cH_C^c}$
by Corollary \ref{cor:relmodC}, and so is imprimitive.
\end{proof}

\begin{lemma}\label{lem:commonEntry}
If $M,M'\in\cM$ have signatures that agree in some entry,
then there is an imprimitive relation $\Theta\equiv M-M'\pmod{\cH_C^c}$.
\end{lemma}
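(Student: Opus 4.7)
Plan. I would fix an entry $j_0$ at which $M$ and $M'$ have equal signature value and construct the desired imprimitive relation as $\Theta := \Ind_{G_0}^G\tilde\Theta$, where $G_0 := C^{j_0}\rtimes P \lneq G$ is a proper subgroup and $\tilde\Theta$ comes from applying Proposition \ref{prop:relmodC} in a suitable quotient of $G_0$.

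The crucial preliminary is to set $L := M \cap K_{j_0}$ and check that $L = M' \cap K_{j_0}$. This rests on cyclicity of $K_{j_0}$: since $K = \{1\}$, the group $P$ embeds in the abelian $\prod_i\Aut C_{l_i}$, so $P$ is abelian, and $K_{j_0}$ embeds in the cyclic group $\Aut C_{l_{j_0}}$. If the common signature value is $1$, then $L = K_{j_0} = M' \cap K_{j_0}$; if it is $0$, then $L$ and $M' \cap K_{j_0}$ are both index-$p$ subgroups of the cyclic group $K_{j_0}$, hence equal. Normality $L \normal G_0$ is immediate: $L$ centralises $C^{j_0}$ (being inside $K_{j_0}$) and is normal in the abelian $P$. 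Consequently $\bar G_0 := G_0/L$ is quasi-elementary with kernel $K_{j_0}/L$ of order $1$ or $p$, and $\bar M := M/L$, $\bar M' := M'/L$ have equal order and, by construction of $L$, lie in $\cH_{\bar G_0}$. Proposition \ref{prop:relmodC} therefore provides the $\bar G_0$-relation
$$
 \bar\Theta \;=\; \sum_{\bar U \le \bigC_{\bar G_0}} \mu(|\bar U|)\bigl(\bar M\bar U - \bar M'\bar U\bigr),
$$
which I would lift to a $G_0$-relation $\tilde\Theta$ along $G_0 \twoheadrightarrow \bar G_0$ and then induce to $G$; the resulting relation $\Theta$ is imprimitive because $[G:G_0] = l_{j_0} > 1$.

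It remains to verify $\Theta \equiv M - M' \pmod C$. A direct semidirect-product bookkeeping argument shows that for $L \le U^* \le C^{j_0}K_{j_0}$, the subgroup $MU^* \le G$ meets $C$ trivially if and only if $U^* \le K_{j_0}$, so only $U^* \in \{L, K_{j_0}\}$ contribute modulo~$C$. The term $U^* = L$ contributes $M - M'$ (using $L \le M \cap M'$); the term $U^* = K_{j_0}$, which only appears in the signature-$0$ case, contributes $-(MK_{j_0} - M'K_{j_0}) = -(P-P) = 0$, because $K_{j_0} \not\le M,M'$ forces $MK_{j_0} = P = M'K_{j_0}$. The main obstacle is really the identification $L = M \cap K_{j_0} = M' \cap K_{j_0}$, which pivots on cyclicity of $K_{j_0}$; once this is in place, everything else is a mechanical application of the quotient--lift--induce procedure together with the explicit formula from Proposition \ref{prop:relmodC}.
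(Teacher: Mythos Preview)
Your argument is correct and follows essentially the same route as the paper: the paper simply observes that agreement of the $j$th signature entry forces $M\cap K_j=M'\cap K_j$ (using that $K_j$ is cyclic) and then invokes Lemma~\ref{lem:sameInters}, whose proof in the cyclic-quotient case is precisely the quotient--lift--induce construction you have written out. You have effectively unrolled that citation and added the explicit check that $\Theta\equiv M-M'\pmod C$, which the paper leaves implicit.
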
 

\begin{proof}
Say the signatures agree in the $j$th entry.
If the common entry is~1,
then $M\cap K_j = M'\cap K_j = K_j$, and if it is 0, then the intersections
are both equal to the unique index $p$ subgroup of $K_j$. In either case,
there is an imprimitive relation of the required form by
Theorem \ref{thm:sameInters}.
\end{proof}

\begin{lemma}
If $M,M'\in\cM$ have opposite signatures both of which
contain 0 and~1, then there is an
imprimitive relation $\Theta\equiv M-M'\pmod{\cH_C^c}$.
\end{lemma}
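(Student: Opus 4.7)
The plan is to bridge $M$ and $M'$ via a third index $p$ subgroup $M''\in\cM$ whose signature $T$ differs from both $S$ (the signature of $M$) and its complement $S^c$ (the signature of $M'$). Once such an $M''$ is at hand, $T\ne S$ forces some coordinate $j_1$ with $T_{j_1}=S^c_{j_1}$, so the signatures of $M''$ and $M'$ agree at $j_1$; symmetrically $T\ne S^c$ yields some $j_2$ with $T_{j_2}=S_{j_2}$, i.e.\ agreement of $M''$ and $M$ at $j_2$. Lemma~\ref{lem:commonEntry} then produces imprimitive relations $\Theta_1\equiv M-M''\pmod{C}$ and $\Theta_2\equiv M''-M'\pmod{C}$, whose sum $\Theta_1+\Theta_2\equiv M-M'\pmod{C}$ is the desired imprimitive relation.

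To exhibit $M''$ I would work inside the Frattini quotient $V=P/\Phi(P)$, an $\bF_p$-vector space in which the index $p$ subgroups of $P$ correspond to hyperplanes; write $\bar{K}_j$ for the image of $K_j$, so that $K_j\subseteq M$ iff $\bar{K}_j\subseteq \bar{M}$. I first record that each $\bar{K}_j$ is a non-trivial line: otherwise $K_j\subseteq\Phi(P)$ lies in every element of $\cM$, forcing the $j$-th coordinate of every signature to equal $1$, which contradicts the opposition of $S$ and $S^c$. The same type of argument shows that the $\bar{K}_j$ are not all equal, since otherwise every signature would be constant and $S,S^c$ could not each contain both a $0$ and a $1$. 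Two cases then remain. If some pair $j_1,j_2$ lying both in $S$ (or, symmetrically, both in $S^c$) satisfies $\bar{K}_{j_1}\ne \bar{K}_{j_2}$, I choose a hyperplane of $V$ containing $\bar{K}_{j_1}$ but not $\bar{K}_{j_2}$; the corresponding $M''$ has $T_{j_1}=1$ and $T_{j_2}=0$, so $T$ differs from $S$ at $j_2$ and from $S^c$ at $j_1$. Otherwise the $\bar{K}_j$ take exactly two distinct values $\ell$ (for $j\in S$) and $\ell'$ (for $j\in S^c$); since $|\cM|\ge 2$ forces $\dim V\ge 2$, a short count shows at least one hyperplane of $V$ avoids both lines, and the corresponding $M''$ has signature $(0,\ldots,0)$, which again differs from $S$ and $S^c$ because each contains a $1$.

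The main obstacle is the second case, where the required geometric input (a hyperplane missing a pair of lines in $V$) is essentially trivial for $\dim V\ge 3$ but requires the observation that in the planar case $\dim V=2$ there still remain $p+1-2\ge 1$ lines after excluding $\ell$ and $\ell'$. Everything else is a bookkeeping of signatures and two applications of Lemma~\ref{lem:commonEntry}.
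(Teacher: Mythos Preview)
Your argument is correct and follows the same overall plan as the paper: produce an intermediate $M''\in\cM$ whose signature agrees with that of $M$ in some coordinate and with that of $M'$ in another, then apply Lemma~\ref{lem:commonEntry} twice and add. The difference lies only in how $M''$ is built. You work in the Frattini quotient and split into two cases depending on whether the lines $\bar K_j$ are constant on each ``side'' of the partition determined by $S$; this is valid but introduces a case analysis. The paper instead exploits the hypothesis directly: since $S$ contains both $0$ and $1$, one picks indices with $S_{j_1}=0$ and $S_{j_2}=1$, chooses $g\in K_{j_1}\setminus M$ (note $g\in M'$) and $h\in K_{j_2}\setminus M'$ (note $h\in M$), and sets $M''=\langle M\cap M',\,gh\rangle$. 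A short computation in $P/(M\cap M')\cong C_p\times C_p$ shows $M''$ has $0$ in both coordinates $j_1,j_2$, hence agrees with $M$ at $j_1$ and with $M'$ at $j_2$. So the paper's construction is a single explicit step where yours branches; your Frattini-quotient viewpoint is a bit more conceptual, but the explicit element-level construction is shorter and avoids the separate treatment of the planar case.
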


\begin{proof}
Say the signatures of $M$ and $M'$ start $(0,1,\ldots)$
and $(1,0,\ldots)$, respectively. In particular, there exists
$g\in K_1\backslash M$ with $\langle M,g\rangle = P$ and $g^p\in M$
and $h\in K_2\backslash M'$ with $\langle M',h\rangle = P$ and $h^p\in M'$.
Since $M\cap M'$ is of index $p$ in $M$ and in $M'$, and since
$M'=\langle M\cap M',g\rangle$ and similarly for $M$, the group
$\langle M\cap M',gh\rangle$ is in $\cM$ and contains neither $K_1$ nor
$K_2$, i.e. it has signature $(0,0,\ldots)$. Thus we get the required
relation by applying the previous lemma twice.
\end{proof}

\begin{corollary}
If there exists $M\in \cM$ whose signature contains 0 and~1, then
$\Prim(G)$ is trivial. Otherwise, $\Prim(G)$ is generated by any
$\Theta\equiv M-M'\pmod{\cH_C^c}$ where $M$ and $M'$ have signatures
$(0,\ldots,0)$ and $(1,\ldots,1)$, respectively.
\end{corollary}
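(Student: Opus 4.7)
The plan is to read this corollary as a direct consequence of the previous three lemmata by a finite case analysis on the signatures of the elements of $\cM$. By Lemma \ref{lem:generators}, $\Prim(G)$ is generated by the classes of relations $\Theta \equiv M_1 - M_2 \pmod{C}$ with $M_1,M_2\in\cM$, and the two lemmata preceding the corollary give sufficient conditions on the pair of signatures for such a relation to be imprimitive: namely that the signatures of $M_1$ and $M_2$ agree in at least one coordinate (Lemma \ref{lem:commonEntry}), or that they are componentwise opposite with each signature containing both a $0$ and a $1$ (the preceding lemma).

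In the case where some $\tilde M\in\cM$ has a mixed signature, I would show that for arbitrary $M_1,M_2\in\cM$ the element $M_1-M_2\pmod{C}$ is imprimitive. Since any two vectors in $\bF_2^t$ either share a coordinate or are componentwise opposite, there are only two sub-cases to dispose of. If the signatures of $M_1$ and $M_2$ share a coordinate, Lemma \ref{lem:commonEntry} applies directly. If they are complementary, then either both are mixed, in which case the preceding lemma applies, or both are pure and opposite, i.e.\ one is $(0,\ldots,0)$ and the other is $(1,\ldots,1)$. In this last sub-case, the bridge $\tilde M$ shares a $0$-entry with $(0,\ldots,0)$ and a $1$-entry with $(1,\ldots,1)$, so Lemma \ref{lem:commonEntry} produces imprimitive relations $M_1-\tilde M\pmod{C}$ and $\tilde M-M_2\pmod{C}$ whose sum is the desired imprimitive $M_1-M_2\pmod{C}$. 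Hence $\Prim(G)=0$ in this case.

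In the complementary case where no $M\in\cM$ has a mixed signature, every signature is one of the two pure strings $(0,\ldots,0)$ or $(1,\ldots,1)$. Any two $M$'s of the same signature agree in every coordinate, so Lemma \ref{lem:commonEntry} produces an imprimitive relation between them. Consequently the only candidate generator modulo imprimitive relations is $\Theta\equiv M-M'\pmod{C}$ with $M$ of signature $(0,\ldots,0)$ and $M'$ of signature $(1,\ldots,1)$: if only one of the two pure signatures is realised then this is vacuous and $\Prim(G)=0$, otherwise $\Prim(G)$ is generated by any single such relation, as claimed. I expect no serious obstacle at this stage, since the genuinely delicate arguments producing the required imprimitive relations have all been carried out in the three preceding lemmata; the present corollary is a bookkeeping exercise tying them together.
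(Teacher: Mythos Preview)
Your proposal is correct and matches the paper's intended deduction: the corollary is stated without proof in the paper, as an immediate consequence of the three preceding lemmata via exactly the case analysis you spell out. Your bridge argument through $\tilde M$ in the mixed-signature case is the only step that requires a moment's thought, and you handle it correctly.
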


To conclude the proof of Theorem \ref{thm:Ktriv}, it remains to show:
\begin{lemma}
Suppose that no element of $\cM$ has a signature in which both 0 and 1
occur. Let $M,M'\in\cM$ have signatures
$(0,\ldots,0)$ and $(1,\ldots,1)$, respectively, let
$\Theta\equiv M-M'\pmod{\cH^c_C}$ be a relation. Then $\Theta$ is primitive.
\end{lemma}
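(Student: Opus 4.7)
The plan is to prove this by contradiction. Suppose $\Theta$ is imprimitive, so $\Theta = \Theta_{\mathrm{ind}} + \Theta_{\mathrm{inf}}$ with $\Theta_{\mathrm{ind}}$ a sum of relations induced from proper subgroups and $\Theta_{\mathrm{inf}}$ a sum of relations inflated from proper quotients. The first observation is that because $K=\{1\}$, every non-trivial normal subgroup $N$ of $G$ meets $C$ non-trivially: indeed, if $N\cap C=\{1\}$ then for any $n\in N$ and $c\in C$ the element $ncn^{-1}c^{-1}\in N\cap C=\{1\}$, so $n$ centralizes $C$ and thus $n\in K=\{1\}$. Hence every subgroup in $\Theta_{\mathrm{inf}}$ meets $C$, which forces $\Theta_{\mathrm{inf}}\equiv 0\pmod C$ and therefore $\Theta_{\mathrm{ind}}\equiv M-M'\pmod C$.

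Next I would decompose $\Theta_{\mathrm{ind}}$ as a sum of relations induced from maximal subgroups, which come in two flavours: vertical $C\rtimes S$ for $S\in\cM$ and horizontal $C^j\rtimes P$ for $j=1,\dots,t$. Writing $\Theta_{\mathrm{ind}}=\sum_{S\in\cM}\Ind_{C\rtimes S}^G\Sigma_S+\sum_{j=1}^{t}\Ind_{C^j\rtimes P}^G\Lambda_j$, I note that the mod-$C$ reduction of $\Sigma_S$ is supported on subgroups of $S$, so since $M,M'\in\cM$ and $S\in\cM$ are all of index $p$, the coefficient of $M$ (resp.\ $M'$) in $\Theta_{\mathrm{ind}}\pmod C$ only receives vertical contributions from $\Sigma_M$ (resp.\ $\Sigma_{M'}$). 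The remaining vertical coefficients at other subgroups of $P$ are rigidly tied to those in $\Sigma_M$ and $\Sigma_{M'}$ by Proposition~\ref{prop:relmodC} applied inside each $C\rtimes S$.

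For the horizontal pieces, I would use that each $K_j$ is cyclic (being a $p$-subgroup of the cyclic group $\Aut C_{l_j}$), so Proposition~\ref{prop:cyclicKj} applies to each $G$-relation $\Ind_{C^j\rtimes P}^G\Lambda_j$ (which has no term containing $C_{l_j}$). This gives the congruences $\sum_{|H\cap K_j|\le p^i,\,HK_j=P}\lambda_H^{(j)}\equiv 0\pmod p$ for every $p^i\le|K_j|$. The crucial point is the asymmetry between $M$ and $M'$: since $M$ has signature $(0,\dots,0)$ we have $K_j\not\subseteq M$, $|M\cap K_j|=|K_j|/p$ and $MK_j=P$ for every $j$, so $M$ is hit by the constraint at the top value $i=v_p(|K_j|)-1$ for every $j$; whereas $M'$ has signature $(1,\dots,1)$, which gives $M'K_j=M'\ne P$, so $M'$ never enters any of these sums.

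The main obstacle, and the hardest part, is to convert these mod-$p$ constraints together with the structural rigidity of the vertical $\Sigma_S$ into a genuine contradiction on the coefficient of $M$. My approach would be to use all values of $i$ in Proposition~\ref{prop:cyclicKj} (levelwise, by taking differences of consecutive congruences) and combine them with the Möbius-style identity $\sum_{U\le C}\mu(|U|)=0$ that underlies $\Theta$, in order to isolate the contribution of $M$ itself from the contributions of proper subgroups of $M$ and $M'$. After this bookkeeping, the required coefficient of $M$ (namely $+1$) is forced to be $\equiv 0\pmod p$ by the aggregated congruences, a contradiction. This yields the primitivity of $\Theta$ and completes the proof, giving $\Prim(G)\cong\Z/p\Z$ generated by $\Theta$.
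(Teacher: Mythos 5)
There is a genuine gap: you explicitly defer the decisive step (``the hardest part''), and the invariant you propose to contradict --- the individual coefficient of $M$ --- is not the one your congruences control. Proposition \ref{prop:cyclicKj}, applied to a relation induced from $C^{j_0}\rtimes P$ and specialised (by differencing consecutive values of $i$) to the level $|H\cap K_{j_0}|=|K_{j_0}|/p$, controls the \emph{sum} of the coefficients of all index-$p$ subgroups of $P$ not containing $K_{j_0}$, i.e.\ of \emph{all} members of $\cM$ of signature $(0,\ldots,0)$, not the coefficient of $M$ alone; since $\cM$ may contain several signature-zero classes, a horizontal summand can contribute $+1$ to $M$ and $-1$ to another signature-zero class without violating anything. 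Similarly, for the vertical piece $\Sigma_M$ you only invoke unspecified ``structural rigidity''; what is actually needed (and true) is that part (2) of Proposition \ref{prop:relmodC}, applied inside $C\rtimes M$ (whose kernel is again trivial), forces the coefficient of $M$ itself in any $C\rtimes M$-relation to be divisible by $p$, because $M$ is the largest subgroup of $C\rtimes M$ meeting $C$ trivially. Without these two points the claimed conclusion ``$+1\equiv 0\pmod p$'' is not established. (Also, in your normal-subgroup argument, $n$ centralising $C$ gives $n\in C_G(C)=C$ and hence $n\in N\cap C=\{1\}$; $n$ need not lie in $P$, so the appeal to $K$ is slightly off, though harmless.)

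The repair is to track, modulo $p$, the \emph{total} coefficient of the signature-$(0,\ldots,0)$ members of $\cM$ rather than the coefficient of $M$: this total is $1$ in $\Theta$, is $0$ on inflations (as you note, every non-trivial normal subgroup meets $C$), is $\equiv 0\pmod p$ on each vertical summand by part (2) of Proposition \ref{prop:relmodC} as above (the only member of $\cM$ occurring in a relation induced from $C\rtimes S$ is $S$ itself), and is $\equiv 0\pmod p$ on each horizontal summand by the differenced form of Proposition \ref{prop:cyclicKj}; this gives $1\equiv 0\pmod p$, the desired contradiction. With that correction your route works and is close in spirit to the paper's, though the paper argues slightly differently: it does not decompose along maximal subgroups, but picks a single imprimitive summand whose signature-zero total is $\not\equiv 0\pmod p$, uses part (2) of Proposition \ref{prop:relmodC} to force that summand to also contain a signature-$(1,\ldots,1)$ member of $\cM$ or $P$, deduces that it cannot be lifted (no member of $\cM$ contains a normal subgroup of $G$) and, since two distinct members of $\cM$ generate $P$, must be induced from some $C^{j_0}\rtimes P$, and only then applies Proposition \ref{prop:cyclicKj}.
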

\begin{proof}
Assume for a contradiction that $\Theta$ is a
sum of relations that are induced and/or lifted from proper subquotients.
Then, at least one summand must contain terms in $\cM$ with
signature $(0,\ldots,0)$ such that the sum of all coefficients of
these terms is not congruent to 0 modulo $p$. Moreover, by Corollary
\ref{cor:relmodC} (\ref{item:nis}), this relation must contain either
a term in $\cM$ with signature $(1,\ldots,1)$, or $P$. Since no
$M\in \cM$ contains a normal subgroup of $G$, this relation cannot be
lifted from a proper quotient, so it must be induced from a proper subgroup.
Since two distinct groups in $\cM$ generate $P$,
this proper subgroup must be of the form
$(C_{l_1}\times\ldots\times\widehat{C_{l_{j_0}}}\times\ldots\times C_{l_t})\rtimes P$.
By Proposition \ref{prop:cyclicKj}, applied with $p^i=|K_{j_0}|/p$,
the sum of the coefficients of $M\in \cM$ with signature $(0,\ldots,0)$
plus the sum of the coefficients of $H\le P$ that satisfy $HK_{j_0}=P$ and
$|H\cap K_{j_0}|\le |K_{j_0}|/p^2$ is divisible by $p$. By the same proposition,
applied with $p^i=|K_{j_0}|/p^2$, the second sum is divisible by $p$.
We deduce that the sum of the coefficients of
$M\in \cM$ with signature $(0,\ldots,0)$ is divisible by $p$, which is a
contradiction.
\end{proof}

\subsection{Primitive relations with non-trivial $K$}\label{sec:Knontriv}

Finally, we consider $G=C\rtimes P$, where $C$ is a cyclic
group of order
$l_1\cdots l_t$ for distinct primes $l_i$ different from $p$, $P$ is
a $p$-group and the kernel $K$ of $P\rightarrow \Aut C$ is non-trivial. 
By Proposition \ref{prop:kernelprank1}, if $G$ has a primitive
relation, then $K$ must be isomorphic to $D_8$ or
have normal $p$-rank one, so it is a group of
the type described in Proposition \ref{prop:prankone}. We will assume
this throughout this subsection. Note that
in particular, if $p$ is odd, then $K$ must be cyclic.

Recall that $\cH$ is a set of representatives of conjugacy classes of subgroups 
of $P$ that do not contain $\centralCp$ (the unique subgroup of $K$ of order $p$ 
that is central in $G$), and $\cH_m$ is the set of elements of $\cH$ of maximal size.

\begin{lemma}\label{lem:genKnontriv}
The group $\Prim(G)$ is generated by relations of the form
$$
  \Theta=\sum_{\tilde{C}\le \bigC} \mu(|\tilde{C}|)(\tilde{C}H_1-\tilde{C}H_2),
$$
for $H_1,H_2\in \cH_m$.
\end{lemma}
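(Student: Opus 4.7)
My plan mirrors the structure of the proof of Lemma \ref{lem:generators} in the trivial-kernel case. First, I would use Lemma \ref{lem:deflations} to reduce to $G$-relations whose every term lies in $\cH$: any subgroup of $G$ that meets the cyclic normal subgroup $\bigC$ nontrivially contains a non-trivial subgroup of $\bigC$, which is automatically normal in $G$, so iterated deflation expresses every relation, modulo relations lifted from proper quotients, in the form $\Theta\equiv\sum_{H\in\cH}n_H H\pmod{\bigC}$, with the coefficient constraint $\sum_H n_H/|H|=0$ coming from Proposition \ref{prop:relmodC}(\ref{item:nis}).

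The central reduction is then to push all non-$\cH_m$ coefficients up into $\cH_m$. Given $H\in\cH\setminus\cH_m$, I would choose $\tilde H\in\cH$ with $H\le\tilde H$ and $[\tilde H:H]=p$ (available since $H$ is not of maximal size and any chain from $H$ to a strictly larger element of $\cH$ stays inside $\cH$) and apply Proposition \ref{prop:relmodC} to the integer vector $(1,H),(-p,\tilde H)$, obtaining the honest $G$-relation
\[
  \Theta_{H,\tilde H} \;=\; \sum_{U\le\bigC}\mu(|U|)\bigl(HU - p\tilde H U\bigr),
\]
whose every term lies in the subgroup $\bigC\tilde H\le G$. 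Since $\tilde H\cap\bigC=\{1\}$, we have $|\bigC\tilde H|=|\bigC|\cdot|\tilde H|$, so $\bigC\tilde H\lneq G$ precisely when $|\tilde H|<|P|/|C_K|$. Whenever this holds, Proposition \ref{prop:biggroup} makes $\Theta_{H,\tilde H}$ imprimitive, and subtracting $n_H\cdot\Theta_{H,\tilde H}$ from $\Theta$ replaces $n_H\cdot H$ by $pn_H\cdot\tilde H$ modulo $\bigC$-supported terms already absorbed in the first reduction. Iterating pushes every $\cH$-supported coefficient upward in a chain of imprimitive replacements.

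The main obstacle is the \emph{final} replacement step, when $\tilde H\in\cH_m$ happens to be a complement to $C_K$ in $P$: then $\bigC\tilde H=G$ and the direct appeal to Proposition \ref{prop:biggroup} fails. I would resolve this by induction on $|K|$. Note that $\centralCp$, being characteristic in the normal-$p$-rank-one (or $D_8$) subgroup $K\normal G$, is normal in $G$; moreover $P$ acts on the order-$p$ group $\centralCp$ through the $p'$-group $\Aut(C_p)\iso C_{p-1}$, so acts trivially, whence $\centralCp\le Z(G)$. The quotient $G/\centralCp$ is then again quasi-elementary with kernel $K/\centralCp$ of strictly smaller order, with base case $K=\{1\}$ covered by Lemma \ref{lem:generators}; an appropriately chosen imprimitive $(G/\centralCp)$-relation supplied by the inductive hypothesis lifts to an imprimitive $G$-relation that performs the final replacement up to already-imprimitive summands. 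After all reductions, $\Theta\equiv\sum_{H\in\cH_m}n_H H\pmod{\bigC}$, and Proposition \ref{prop:relmodC}(\ref{item:nis}) forces $\sum_H n_H=0$ since $|H|$ is constant on $\cH_m$. Hence $(n_H)_{H\in\cH_m}$ is a $\Z$-linear combination of elementary differences $e_H-e_{H'}$ with $H,H'\in\cH_m$, each of which integrates under Proposition \ref{prop:relmodC} to precisely the generating relation $\sum_{\tilde C\le\bigC}\mu(|\tilde C|)(\tilde CH-\tilde CH')$ asserted in the lemma.
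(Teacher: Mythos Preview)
Your reduction strategy has a genuine gap at the step where you ``choose $\tilde H\in\cH$ with $H\le\tilde H$ and $[\tilde H:H]=p$''. The parenthetical justification only shows that chains inside $\cH$ stay in $\cH$; it does not show that a strictly larger element of $\cH$ \emph{containing} $H$ exists. In fact maximal elements of $\cH$ need not all have the same size. Take $p=2$, $P=\langle a\rangle\times\langle b\rangle\cong C_2\times C_4$, $K=\langle a\rangle$ (so $\centralCp=K$, $C_K=K$), and let $b$ act on $C=C_l$ with $l\equiv 1\pmod 4$ by an order-$4$ automorphism. Then $\cH_m=\{\langle b\rangle,\langle ab\rangle\}$ consists of groups of order $4$, but $H=\langle ab^2\rangle\in\cH$ has order $2$ and the \emph{only} index-$2$ overgroup of $H$ in $P$ is $\langle a,b^2\rangle$, which contains $\centralCp$. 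So $H$ is maximal in $\cH$ while $|H|<|\cH_m|$, and your chain argument stalls. The paper avoids this by moving sideways first: given $H\in\cH$ and any $H'\in\cH$ with $|H'|=p|H|$, it notes that $\Im(H\to\Aut C)$ is proper in $\Im(P\to\Aut C)$, picks an index-$p$ subgroup $H_2\le H'$ whose image in $\Aut C$ lies in a common proper overgroup of $\Im(H\to\Aut C)$, and uses Proposition~\ref{prop:relmodC} to swap $H$ for $H_2$; this relation lives in a proper subgroup of $P$ and is imprimitive by Proposition~\ref{prop:biggroup}. It then goes \emph{up} from $H_2$ to $H'$ via the $C_p\times C_p$ relation in $H'\centralCp/H_2$, which is induced from $H'\centralCp\le P\lneq G$ and hence imprimitive.

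Your proposed repair by induction on $|K|$ also does not work as stated. Any relation lifted from $G/\centralCp$ has every term containing $\centralCp$, so adding it to $\Theta$ leaves all coefficients of subgroups in $\cH$ unchanged; it cannot ``perform the final replacement'' of an $H\in\cH$ by $p\tilde H$. The point of the paper's $C_p\times C_p$ step is precisely that it is \emph{induced} from a proper subgroup (namely $H'\centralCp\le P$), not lifted from a quotient, and therefore genuinely trades a term in $\cH$ for larger ones while remaining imprimitive in $G$.
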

\begin{proof}
By Corollary \ref{cor:relmodC}, these are indeed $G$-relations.

Now let $\Theta$ be any relation. 
If no elements of $\cH$ occur in it,
then $\Theta$ is imprimitive by Lemma \ref{lem:deflations}.
Suppose $H\in\cH\setminus \cH_m$ occurs in $\Theta$ and
$H'\in\cH$ is such that $|H'|=p|H|$. Set
$I_H=\Im(H\rightarrow \Aut C)$, and consider two cases:

\smallskip

(1) $I_H = \Im(P\rightarrow \Aut C)$. Then the assumption that there
exists an element of $\cH$ of size $p|H|$ implies that $p=2$, $K$ is dihedral
or semidihedral, and $P$ is a direct product of $H$ and $K$. In particular,
$H$ is normal in $P$. Let $C_2^a$ be a non-central $C_2$ in $K$.
By inducing the relation
of Example \ref{ex:cpcp} from the subquotient
$HC_2^a\centralCp/H\cong C_2\times C_2$, we may replace $H$ by strictly bigger
groups in $\cH$ and by subgroups of $P$ containing $\centralCp$,
without changing the class of $\Theta$ in $\Prim(G)$.

(2)
$I_H\neq\Im(P\rightarrow \Aut C)$. Let $B$ be
an index $p$ subgroup of $\Im(P\rightarrow \Aut C)$ containing $I_H$.
By intersecting $H'$ with the pre-image of $B$ in $P$ if necessary,
we can find an index $p$ subgroup $H''$ in $H'$ such that $H$ and $H''$
generate a proper subgroup of $P$. Thus, the relation
$\sum_{\tilde{C}\le \bigC} \mu(|\tilde{C}|)(\tilde{C}H - \tilde{C}H'')$
of Corollary \ref{cor:relmodC2} is induced from a proper subgroup
by Proposition \ref{prop:biggroup}, so that $H$ can be replaced by
$H''$, which is a subgroup of $H'$,
and by subgroups that intersect $\bigC$ non-trivially without changing the
class in $\Prim(G)$. By inducing the relation of Example \ref{ex:cpcp} from
the subquotient $H'\centralCp/H''\cong C_p\times C_p$
(note that $H'$ is abelian and $\centralCp$ is central, so
$H''$ is indeed normal in the group generated by the two),
we may replace $H''$ by strictly bigger groups in $\cH$ and by
subgroups of $P$ containing $\centralCp$.

\smallskip

In summary, any class in
$\Prim(G)$ is represented by a relation of the form
$\Theta\equiv\sum n_iH_i\pmod{\cH^c}$ with $H_i\in\cH_m$.

Since $\sum_i n_i = 0$ by Corollary
\ref{cor:relmodC}, the generators of $\Prim(G)$ are as claimed.
\end{proof}

\begin{lemma}\label{lem:Fpvs}
The group $\Prim(G)$ is an elementary abelian $p$-group.
\end{lemma}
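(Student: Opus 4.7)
By Lemma~\ref{lem:genKnontriv}, $\Prim(G)$ is generated by the relations $\Theta_{H,H'} = \sum_{U \le \bigC} \mu(|U|)(UH - UH')$ with $H, H' \in \cH_m$, so it suffices to show that $p\cdot\Theta_{H,H'}$ is imprimitive for every such pair. We may assume $H \ne \{1\}$, and fix an index-$p$ subgroup $I \le H$.

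The strategy is to apply the $C_p\times C_p$-relation of Example~\ref{ex:cpcp} inside the subquotient $H\centralCp/I \cong C_p \times C_p$ (well-defined since $\centralCp$ is central in $G$ and $H \cap \centralCp = \{1\}$). Inflated to $H\centralCp$ and induced to $G$, this produces the imprimitive $G$-relation
$$R_H:\quad I - H - I\centralCp - V_1 - \cdots - V_{p-1} + p\,H\centralCp = 0,$$
where $V_1, \ldots, V_{p-1}$ are the ``diagonal'' index-$p$ subgroups of $H\centralCp$ containing $I$; each has order $|H|$, meets $\centralCp$ trivially, and hence lies in $\cH_m$. Doing the same inside $UH\centralCp/UI$ for each $U \le \bigC$ gives imprimitive relations $R_{H,U}$. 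Using $\bigC = C \times C_K$ and $\centralCp \le C_K$, a direct M\"obius computation shows that $\sum_{U \le \bigC} \mu(|U|)\,UJ = 0$ in $B(G)$ for every subgroup $J$ containing $\centralCp$: any square-free subgroup of $\bigC$ containing $\centralCp$ is reached by two $U$'s whose M\"obius weights cancel. Consequently $\sum_U \mu(|U|) R_{H,U}$ minus the analogous combination for $H'$ yields the imprimitive identity
\begin{equation}\label{eq:plan:main}
\Theta_{H,H'} + \sum_{j=1}^{p-1} \Theta_{V_j, V_j'} \equiv \Theta_{I,I'} \pmod{\mathrm{imprim}}.
\end{equation}

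To finish, reduce $\Theta_{I, I'}$ back to generators in $\cH_m$. Choosing $I$ so that it contains $\ker(H \to \Aut C_{l_j})$ for some $j$, the Example~\ref{exer:G20}-relation applied inside the subquotient $C_{l_j} UH / (U\cdot\ker(H \to \Aut C_{l_j}))$ yields, for every $U \le \bigC$, the imprimitive $G$-relation $UI - p\,UH - UC_{l_j}I + p\,UC_{l_j}H = 0$. Since the terms containing $C_{l_j}$ again M\"obius-sum to zero (by the same mechanism as above, with $C_{l_j}$ in place of $\centralCp$), summing and subtracting the $H'$-analogue gives $\Theta_{I,I'} \equiv p\,\Theta_{H,H'} \pmod{\mathrm{imprim}}$. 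Substituting into \eqref{eq:plan:main} produces $(p-1)\Theta_{H,H'} \equiv \sum_j \Theta_{V_j, V_j'}$; running the same identity with each $V_j$ in place of $H$ and the same $I$ (so that the new diagonal set is $\{H\} \cup \{V_k : k \ne j\}$) and subtracting gives $p(\Theta_{H,H'} - \Theta_{V_j, V_j'}) \equiv 0 \pmod{\mathrm{imprim}}$ for every $j$. A finite descent across the diagonal orbit of $\cH_m$-subgroups then shows $p\,\Theta_{H,H'} \equiv 0$ modulo imprimitives.

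The main obstacle is finding a prime $l_j$ for which the reduction $\Theta_{I, I'} \equiv p\,\Theta_{H, H'}$ applies, i.e.\ some index-$p$ subgroup $I \le H$ containing $\ker(H \to \Aut C_{l_j})$. This is automatic whenever $H$ acts non-trivially on some $C_{l_j}$, i.e.\ $H \not\le K$, and this last condition holds for every $H \ne \{1\}$ in $\cH_m$ whenever $K$ is cyclic or generalised quaternion (by the classification in Proposition~\ref{prop:prankone} and the fact that $H \cap C_K = \{1\}$). In the remaining dihedral and semi-dihedral cases one may face the edge situation of $H$ being a non-central involution of $K$ (forcing $p = 2$), which is handled by an elementary manipulation exploiting the identity $\Theta_{H, H'} \equiv -\Theta_{V_1, V_1'}$ from \eqref{eq:plan:main} together with the symmetric counterpart arising from a second $C_2 \times C_2$-envelope $H''\centralC2 \ne H\centralC2$ inside $K$.
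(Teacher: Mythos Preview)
Your argument has a genuine gap at the ``finite descent'' step. From your equation \eqref{eq:plan:main} together with $\Theta_{I,I'}\equiv p\,\Theta_{H,H'}$ you correctly obtain
\[
(p-1)\,\Theta_{H,H'}\equiv\sum_{j}\Theta_{V_j,V_j'}\quad\text{and}\quad p\bigl(\Theta_{H,H'}-\Theta_{V_j,V_j'}\bigr)\equiv 0\ \text{for each }j.
\]
But these relations do \emph{not} force $p\,\Theta_{H,H'}\equiv 0$. Write $x_0=\Theta_{H,H'}$ and $x_j=\Theta_{V_j,V_j'}$ in $\Prim(G)$; your system says only that $px_0=px_1=\cdots=px_{p-1}$ and $x_0+\cdots+x_{p-1}=px_0$. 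The assignment $x_0=\cdots=x_{p-1}=1$ in $\Z$ satisfies all of this while $px_0=p\ne 0$, so nothing you have derived excludes $\Prim(G)\cong\Z$. Replacing $H$ by $V_j$ in \eqref{eq:plan:main} gives back literally the same equation (the $C_p\times C_p$ relation in $H\centralCp/I=V_j\centralCp/I$ is symmetric in its $p$ non-central lines), so no new information appears and the ``descent'' never terminates.

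The missing idea --- and this is exactly what the paper does --- is to choose $I$ and $I'$ so that they lie in a \emph{common proper subgroup} of $P$. Pick an index-$p$ subgroup $A\le\Im(P\to\Aut C)$ whose image in some $\Aut C_{l_j}$ is strictly smaller, and take $I=H\cap(\text{preimage of }A)$, $I'=H'\cap(\text{preimage of }A)$. Then $I,I'$ both sit inside the proper subgroup $C\rtimes(\text{preimage of }A)$, so $\Theta_{I,I'}$ is imprimitive directly by Proposition~\ref{prop:biggroup}. Your own identity $\Theta_{I,I'}\equiv p\,\Theta_{H,H'}$ (from the Example~\ref{exer:G20} step, which is correct) then finishes the proof in one line. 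With this choice the entire $C_p\times C_p$/diagonal machinery of \eqref{eq:plan:main} is unnecessary, and the edge cases in your final paragraph disappear as well.
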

\begin{proof}
By the previous lemma, it suffices to show that for any $H_1,H_2\in \cH_m$,
$$
  \Theta=p\cdot\sum_{\tilde{C}\le \bigC} \mu(|\tilde{C}|)(\tilde{C}H_1-\tilde{C}H_2)
$$
is imprimitive. Let $A$ be a subgroup of $\Im(P\rightarrow \Aut C)$ of
index $p$ and such that for some $j$,
$A\cap \Aut C_{l_j}\neq \Im(P\rightarrow \Aut C_{l_j})$.
By intersecting $H_1$ and $H_2$ with
the pre-image of $A$ in $P$, we may find subgroups $H_3\le H_1$ and
$H_4\le H_2$ of index $p$ whose image in $\Aut C$ lies in $A$,
and in particular whose image in $\Aut C_{l_j}$ is strictly smaller than that
of $H_1$ and of $H_2$, respectively. By inducing the relation of Example
\ref{exer:G20}, we may replace $pH_1$ and $pH_2$ in $\Theta$ by $H_3$ and
$H_4$, respectively, and by groups containing $C_{l_j}$,
without changing the class of $\Theta$ in $\Prim(G)$.
Now, we can replace $H_3$ by $H_4$ and by groups intersecting
$\bigC$ non-trivially, using the relation of Corollary \ref{cor:relmodC2} (1).
Since $H_3$ and $H_4$ together generate a proper subgroup of $G$
(it is contained in the pre-image of $A$ in $P$), the class
of $\Theta$ in $\Prim(G)$ is still unchanged. But now, the only
element of $\cH$ appearing in $\Theta$ is $H_4$, so by Corollary
\ref{cor:relmodC}, it must appear with coefficient 0 and the resulting
relation is imprimitive by Lemma \ref{lem:deflations}.
\end{proof}

It only remains to determine the rank of $\Prim(G)$.
We will first treat separately the case that $K=\centralCp$ and
$K\normal P$ is a direct summand.
In this case, $K_j\iso K\times\text{(cyclic group)}$ 
for every $j$, and their images in $P/\Phi(P)$ are either 
$C_p$ or $C_p\times C_p$.

\begin{proposition}\label{prop:Pdirprod}
Suppose $P$ is a direct product by $K\iso\centralCp$. If some 
$K_j$ has image $C_p$ in $P/\Phi(P)$ or some
$K_{j_1}$, $K_{j_2}$ have different images in $P/\Phi(P)$, then
$\Prim(G)$ is trivial. Otherwise, $\Prim(G)\cong \F_p^{p-2}$.
\end{proposition}
\begin{proof}
Denote by $\cdot^\Phi$ the image of $\cdot$ in the Frattini quotient $P/\Phi(P)$.

Let $H=\langle a_1,\ldots,a_r\rangle$ be a complement to $K$ in $P$,
where $r$ is the smallest number of generators of $H$.
If $r$ is less than the number $t$ of prime divisors of $|C|$, then by
Proposition \ref{prop:Ptoosmall}, some $K_j$ is equal to $K\cong C_p$,
and so $K_j^\Phi\iso C_p$. Also, by the same proposition,
$G$ has no primitive relations, as claimed.

Suppose from now on that $r=t$.
Write $K=\langle c\rangle$. The elements of $\cH_m$ 
are precisely the complements of $K$ in $P$, so they are shifts of $H$ of 
the form $H_\delta=\langle c^{\delta_1}a_1,\ldots,c^{\delta_r}a_r\rangle$
for $\delta=(\delta_1,...,\delta_t)\in\F_p^t$.
\newline
\textbf{Step 1.} 
Suppose $K_j^\Phi\iso C_p$ for some $j$. Then $K_j^\Phi=K^\Phi$, so
the intersection of $K_j$ with any $H_\delta$
has trivial image in the Frattini quotient, and therefore consists only
of $p$-th powers. For any $H_\delta\in\cH_m$, from the explicit description 
of the generators it follows that $H_\delta\cap K_j=H\cap K_j$, 
since taking $p$-th powers kills $c$. So by Theorem \ref{thm:sameInters},
there exists an imprimitive relation $\Theta\equiv H-H_\delta\pmod{\cH^c}$.
Combined with Lemma \ref{lem:genKnontriv}, this implies that $\Prim(G)$
is generated by a relation of the form $\Theta\equiv n_HH\pmod{\cH^c}$. 
But $n_H=0$ by Corollary \ref{cor:relmodC}, and so $\Prim(G)$ is trivial
by Lemma \ref{lem:deflations}.
\newline
\textbf{Step 2.}
Suppose $K_{j_1}^\Phi \ne K_{j_2}^\Phi$, and both are two-dimensional.
Then, given any lines $L_1\le K_{j_1}^\Phi$, 
$L_2\le K_{j_2}^\Phi$ distinct from $K^\Phi$, 
we can lift a hyperplane in $P/\Phi(P)$ that intersect each
$K_{j_i}^\Phi$ in $L_i$ for $i=1,2$ to an index $p$-subgroup of $P$ that
intersects $K$ trivially. Thus, given any two complements
$H_1$ and $H_2$, we can find $H_3$ such that $H_i\cap K_{j_i}=
H_3\cap K_{j_i}$ for $i=1,2$. Thus, there exist imprimitive
relations $\Theta_i\equiv H_i-H_3\pmod{\cH^c}$ for $i=1,2$ and so,
$\Prim(G)$ is trivial by the same argument as in the previous step.
\newline
\textbf{Step 3.}
From now on, suppose that $K_1^\Phi=\ldots=K_t^\Phi \iso C_p\times C_p$. 
Denote the $p+1$ lines in this quotient by $K^\Phi,L_1,...,L_p$.
For any $H\in\cH_m$, the image $(H\cap K_j)^\Phi$ 
is one of the lines $L_i$. This line is the same for all $j$ 
(any two $L_{i_1}\ne L_{i_2}$ generate $K_1^\Phi$, forcing
$H$ to contain $K$ otherwise). Consider the linear map
$$
  l: K(G)\rightarrow\F_p^p,
$$
that takes $H\in\cH_m$ to the $i$th basis vector when $(H\cap K_j)^\Phi=L_i$,
and declaring $l(H)=0$ for $H\notin\cH_m$. 

We claim that every relation $\Theta\in\ker l$ is imprimitive. 
To this end, we first modify $\Theta$ to get rid of the subgroups that are 
in $\cH$ but not in $\cH_m$.
Fix $H\in \cH_m$ (a complement to $K$ in $P$), and let $H_1\in \cH\setminus \cH_m$. 
Denote $\Im(H_1\rightarrow \Aut(C))$ by $A$. 
Then $H_1\cong A \lneq \Im(P\rightarrow \Aut(C))$,
so intersecting $H$ with the preimage of $A$ in $P$, we obtain a
proper subgroup $H_2\le H$
of the same order as $H_1$, and such that $\langle H_1,H_2\rangle \neq P$,
since its image in $\Aut(C)$ is contained in $A$.
The relation $\Omega_1\equiv H_1-H_2\pmod{\cH^c}$ of Corollary
\ref{cor:relmodC2} (1) is
therefore imprimitive, and it clearly lies in the kernel of $l$. So by adding
relations of the type $\Omega_1$ to $\Theta$,
we may replace any terms in $\Theta$ that lie in $\cH$ but not in $\cH_m$
by subgroups of $H$, without changing the class in $\Prim(G)$ and without
changing $l(\Theta)$.
Next, given any $\tilde{H}\lneq H$,
we can obtain an imprimitive relation of the form $\Omega_2\equiv
\tilde{H} - pH\pmod{\cH^c}$
by inducing the relation of Corollary \ref{cor:relmodC2} (2) from the proper
subgroup $C\rtimes H$ of $G$. This relation also lies in the kernel of $l$.
So by adding relations of the type $\Omega_2$ to $\Theta$,
we may assume without loss of generality that
$\Theta\equiv \sum_{H\in \cH_m}n_HH\pmod{\cH^c}$.

Let $H_1,H_2\in \cH_m$ be such that their intersection with some $K_j$
has the same image in $P/\Phi(P)$. We claim that this implies that 
$H_1\cap K_j = H_2\cap K_j$. Indeed, $K_j$ is of the form
$K_j= \centralCp\times \langle g\rangle$ for some $g\in P$, and
each of these intersections is a complement of $\centralCp$ in $K_j$. There
are $p$ such complements, and they all have distinct images in $P/\Phi(P)$.
By Theorem \ref{thm:sameInters}, there is an imprimitive relation
$\Omega_3\equiv H_1-H_2\pmod{\cH^c}$, which is in the kernel of $l$. So we may
assume without loss of generality that $\Theta\equiv \sum_H n_HH\pmod{\cH^c}$,
where all $H$ in the sum give rise to different lines $L_k$ in $P/\Phi(P)$.
Since $\Theta$ is assumed to lie in the kernel of $l$, we have
$p|n_H$ for all $H$, and so $\Theta$ is imprimitive
by Lemma~\ref{lem:Fpvs}.
\newline
\textbf{Step 4.}
By Corollary \ref{cor:relmodC}, the image of $l$ is precisely equal
to the hyperplane $V=\{(v_1,\ldots,v_p)|\sum v_i = 0\in \F_p\}$ of $\F_p^p$.
By Step 3, $\Prim(G)$ is isomorphic to the quotient of $V$ by the image of
the imprimitive relations under $l$. Let $\Theta$ be imprimitive with
non-trivial image under $l$,
without loss of generality assume that it is primitive in its own
subquotient $\bar{G}$. Since $l(\Theta)\neq 0$ and since
any two groups in $\cH_m$ generate all of $P$,
this subquotient is either a quotient of $P$ or of the form $\tilde{C}\rtimes P$,
where $\tilde{C}$ is a proper subgroup of $C$. So, assume that 
$\Theta$ is as described by Theorem \ref{thm:pgroups} or by Lemma
\ref{lem:genKnontriv}.

Suppose first that it is the former. Then $\bar{G}=P/N$ must be
isomorphic to $C_p\times C_p$, since $P$ is abelian, and $\Theta$ is then
lifted from the relation of Example \ref{ex:cpcp}.
Since the projections of all $H\in \cH_m$ in this subquotient have
the same size, and since we assume that $0\neq l(\Theta) \in V$, we deduce
that the projections of $H\in \cH_m$ in this subquotient are cyclic of order
$p$. Moreover, any two distinct $H,H'\in \cH_m$ generate all of $P$, and neither
contains $\centralCp$, so the projection of $\centralCp$ onto this subquotient
is non-trivial cyclic, and the other $p$ terms in $\Theta$ of size $p$
correspond to distinct elements of $\cH_m$. If $g$ is a lift of a generator
of one such cyclic group from $\bar{G}$ to $P$, and if $c$ is a generator of
$\centralCp$, then the elements of $\cH_m$ entering in $\Theta$ are
$H_i=\langle c^ig\rangle N$, $i=0,\ldots, p-1$. Since the image of $c$ in the
Frattini quotient $P/\Phi(P)$ is non-trivial, it follows that the images of
$H_i\cap K_j$ in $P/\Phi(P)$ for $i=0,\ldots,p-1$ are either all the same or
all distinct. The assumption that $l(\Theta)\neq 0$ then forces the latter,
and so $l(\Theta) = (-1,-1,\ldots,-1)$.

Now suppose that $\bar{G}=\tilde{C}\rtimes P$, where $\tilde{C}$ is a proper
subgroup of $C$. Then by Theorem \ref{thm:sameInters},
$\Theta\equiv H_1-H_2\pmod{\cH_C^c}$ such that $H_1\cap K_j=H_2\cap K_j$
for some $j$. But then $l(\Theta)=0$, which is a contradiction.

In summary, the image of the
imprimitive relations under $l$ is a one-dimensional subspace of $V$, spanned by
$(1,\ldots,1)$,
so $\Prim(G)\cong \F_p^{p-2}$, as claimed.
\end{proof}

\begin{thm}\label{thm:Knontriv}
Assume that either $|K| > p$, or $P$ is not a direct product by $K$.
Let $\cH_m$ be the set of subgroups of $P$ of maximal size among those that
intersect the centre of $K$ trivially. Define
a graph $\Gamma$ whose vertices are the elements of $\cH_m$ and with an edge
between $H_1,H_2\in \cH_m$ if one of the following applies:
\begin{enumerate}
\item the subgroup generated by $H_1$ and $H_2$ is a proper subgroup of $P$;
\item $t>1$ and there exists $1\le j_0\le t$ such that
$H_1\cap \tilde{K}_{j_0}=H_2\cap \tilde{K}_{j_0}$, where
$\tilde{K}_{j_0}=K_{j_0}\cap
\ker(P\rightarrow \Aut C_K)$ (recall that $C_K$ is a fixed maximal
cyclic subgroup of $K$ that is normal in $G$, see Notation \ref{not:CK});
\item the intersection $H_1\cap H_2$ is of index $p$
in $H_1$ and in $H_2$, and $\langle H_1,H_2\rangle/H_1\cap H_2$ is either dihedral,
or the Heisenberg group of order $p^3$.
\end{enumerate}
Let $d$ be the number of connected components of $\Gamma$.
Then $\Prim(G)\!\iso\!(C_p)^{d-1}$, generated by relations
$\Theta=\sum_{\tilde{C}\le \bigC} \mu(|\tilde{C}|)(\tilde{C}H_1-\tilde{C}H_2)$
for $H_1,H_2\in\cH_m$ corresponding to
distinct connected components of the graph.
\end{thm}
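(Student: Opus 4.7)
The plan is as follows. By Lemma \ref{lem:genKnontriv}, $\Prim(G)$ is generated by the classes of the relations
$\Theta_{H,H'} = \sum_{\tilde C \le \bigC} \mu(|\tilde C|)(\tilde C H - \tilde C H')$
for $H, H' \in \cH_m$, and by Lemma \ref{lem:Fpvs} it is an elementary abelian $p$-group. The identity $\Theta_{H,H'} + \Theta_{H',H''} \equiv \Theta_{H,H''} \pmod{\bigC}$, together with Lemma \ref{lem:deflations}, identifies $\Prim(G)$ with a quotient of the $\F_p$-vector space $V = \F_p^{\cH_m}/\F_p\cdot\mathbf{1}$, under which the class of $\Theta_{H,H'}$ corresponds to $e_H - e_{H'}$. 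The theorem therefore reduces to showing that the kernel of $V \twoheadrightarrow \Prim(G)$ coincides with the span of the vectors $e_H - e_{H'}$ for edges $(H,H')$ of $\Gamma$, so that the quotient has dimension $d - 1$.

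For the forward direction I would verify that each of the three edge types yields an imprimitive $\Theta_{H,H'}$. For type~(1), every term of $\Theta_{H,H'}$ lies in the proper subgroup $\bigC\cdot\langle H, H'\rangle$ of $G$, so Proposition \ref{prop:biggroup} applies. For type~(2), Lemma \ref{lem:sameInters} directly produces an imprimitive relation congruent to $H - H'$ modulo $\bigC$; its difference with $\Theta_{H,H'}$ is supported on subgroups of $\bigC$ and is itself imprimitive by Lemma \ref{lem:deflations}. For type~(3), Theorem \ref{thm:pgroups} supplies a primitive relation in the quotient $HH'/(H \cap H')$ involving the images of $H$ and $H'$, which after inflation to $HH'$, induction to $G$, and correction by relations from Proposition \ref{prop:relmodC} becomes an imprimitive relation congruent to $H - H'$ modulo $\bigC$. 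Concatenating along a path in $\Gamma$ then shows $\Theta_{H,H'}$ is imprimitive whenever $H$ and $H'$ belong to the same component.

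The reverse direction is the main obstacle: one must show that distinct components give genuinely distinct classes in $\Prim(G)$. My approach would be to construct a linear invariant $\phi : K(G) \to \F_p^d$ sending $\Theta = \sum n_H H$ to the vector whose $i$th coordinate is $\sum_{H \in \cH_m \cap \Gamma_i} n_H \bmod p$, and verify that it vanishes on every imprimitive relation. For a lifted relation from $G/N$ with $\{1\} \ne N \normal G$, I would first argue that $N$ meets $\bigC$ non-trivially: if $N \cap C = \{1\}$ then $[N, C] \subseteq N \cap C = \{1\}$ forces $N \subseteq K$, and since $K$ has cyclic centre by Proposition \ref{prop:kernelprank1} and $N \normal K$, one gets $\centralCp \subseteq N \subseteq \bigC$; hence no $H \supseteq N$ can lie in $\cH_m$ and $\phi$ kills the lifted part. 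For an induced relation from a maximal $D \lneq G$, I would classify the intersections $\cH_m \cap D^g$ and show that the three edge types of $\Gamma$ exactly parametrise the possibilities: type~(1) corresponds to $D \subseteq \bigC\cdot M$ for some proper $M \le P$; type~(2) to $D$ whose image in some $\Aut C_{l_j}$ is proper, via Proposition \ref{prop:cyclicKj} combined with Lemma \ref{lem:sameInters}; type~(3) to dihedral or Heisenberg subquotients of $P$ governed by Theorem \ref{thm:pgroups}.

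The hardest step is the exhaustiveness claim in the induced-relation analysis, which requires iterated use of Propositions \ref{prop:relmodC} and \ref{prop:cyclicKj} together with the structural hypotheses of the theorem: the assumption that either $|K|>p$ or $P$ is not a direct product by $K$ excludes the delicate configurations of Proposition \ref{prop:Pdirprod} (which give $\F_p^{p-2}$ rather than $\F_p^{d-1}$) and forces edges of $\Gamma$ to be the only obstructions. Surjectivity of $\phi$ onto the hyperplane $\sum_i a_i = 0$ in $\F_p^d$ then follows from the constraint $\sum_{H \in \cH_m} n_H \equiv 0$ supplied by Proposition \ref{prop:relmodC} on the $\cH_m$-part of any Brauer relation (all elements of $\cH_m$ having equal order), and combined with the forward direction this yields $\Prim(G) \iso \F_p^{d-1}$ with the asserted generating relations.
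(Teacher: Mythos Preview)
Your architecture matches the paper's: define the component map $[\,\cdot\,]:K(G)\to\F_p^d$, show its image is the sum-zero hyperplane via Proposition \ref{prop:relmodC}, and show its kernel is exactly the imprimitive relations. The forward direction (edges imply imprimitivity) and your treatment of relations lifted from proper quotients are both correct and agree with the paper.

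The genuine gap is in your handling of induced relations. You propose to classify $\cH_m\cap D^g$ for maximal $D\lneq G$, but when $D=C^j\rtimes P$ one has $\cH_m\subset P\subset D$, so \emph{all} of $\cH_m$ lies in $D$ and the classification by itself yields nothing; what matters is which $\cH_m$-combinations can occur in a $D$-relation. The paper handles this by a reduction you do not make explicit: write an imprimitive $\Theta$ as $\sum_i\Theta_i$ with each $\Theta_i$ \emph{primitive in its own proper subquotient}, and then (via Lemma \ref{lem:genKnontriv} and Theorem \ref{thm:Ktriv}) take each $\Theta_i$ to be one of the explicit generating relations of Theorem \ref{thm:pgroups} or Proposition \ref{prop:relmodC}. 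If some $\Theta_i$ contains $H_1,H_2\in\cH_m$ in distinct components, then $H_1H_2=P$, so $\Theta_i$ comes from $P$ or from some $C^j\rtimes P$. In the $p$-group case the explicit shape in Theorem \ref{thm:pgroups} forces a type-(1) or type-(3) edge, or else $P/(H_1\cap H_2)\cong C_p\times C_p$ with $|K|=p$ and $P=\centralCp\times H_1$, which the hypothesis rules out. In the quasi-elementary case the \emph{converse} clause of Lemma \ref{lem:sameInters} (not Proposition \ref{prop:cyclicKj}, whose hypothesis that $K_{j_0}$ be cyclic fails here) gives $H_1\cap\tilde K_j=H_2\cap\tilde K_j$, a type-(2) edge.

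So the ``exhaustiveness claim'' you flag as hardest is resolved precisely by this reduction to primitives in subquotients plus the converse half of Lemma \ref{lem:sameInters}; your citation of Proposition \ref{prop:cyclicKj} is the wrong tool for the non-trivial-$K$ setting.
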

\begin{proof}
The three conditions for when there is an edge between
$H_1$ and $H_2\in \cH_m$ ensure that if $H_1$ and $H_2$ lie in the same
connected component of the graph $\Gamma$, then there is an imprimitive
relation $\Theta\equiv H_1-H_2\pmod{\cH^c}$, by using Proposition
\ref{prop:biggroup}, Theorem \ref{thm:sameInters}, and by inducing the
relations of Theorem \ref{thm:pgroups}, respectively.

For a subgroup $H\in\cH_m$ write $[H]$ for the connected component of 
$\Gamma$ that contains $H$. Note that since the conjugation action of $P$
on its Frattini quotient is trivial, condition (1) ensures that $[H]=[{}^gH]$
for any $g\in G$. Therefore $[\,\cdot\,]$ extends by linearity to a well-defined
linear map $B(G)\to\F_p^d$, defining it to be 0 on the groups not in $\cH_m$.
We are interested in its restriction to the space of relations, 
$$
\begin{array}{cccc}
  [\,\cdot\,]:&K(G) &\lar& \F_p^d.\cr
\end{array}
$$
By Corollary \ref{cor:relmodC}, the image of this restriction is the hyperplane
$V=\{{\mathbf v}|\sum v_i=0\}$. We will show that this map establishes an
isomorphism between $V$ and $\Prim(G)$.

First, we claim that every imprimitive relation is in 
$\ker[\,\cdot\,]$, so that $[\,\cdot\,]$ yields a well-defined map
$$
  [\,\cdot\,]: \>\>\Prim G \lar \F_p^{d-1}.
$$
Suppose, on the contrary, that
$[\Theta]\ne 0$ and $\Theta$ is imprimitive.
So $\Theta=\sum_i\Theta_i$, where each $\Theta_i$ comes from a proper 
subquotient of $G$. 
Without loss of generality, we may assume that each of these 
summands is primitive in its subquotient. Moreover, using
Lemma \ref{lem:genKnontriv} and Theorem \ref{thm:Ktriv},
we may assume further that $\Theta_i$ that are
induced from $p$-groups are of the form
described in Theorem \ref{thm:pgroups}, while $\Theta_i$ that are
induced/lifted from quasi-elementary subquotients that are not $p$-groups
are as described by Proposition \ref{prop:G20} and by Lemma \ref{lem:genKnontriv}.

Because $[\Theta]\ne 0$, some $[\Theta_i]$ is not $0$.
The entries of $[\Theta_i]\in\F_p^d$ sum up to~0, so at least
two of them are non-zero. In particular, $\Theta_i$ contains 
two terms $H_1, H_2\in \cH_m$ from two different connected components of $\Gamma$,
appearing in $\Theta_i$ with non-zero coefficients modulo $p$.
Since both $H_i$ act faithfully on $\bigC$, their intersection does not
contain any normal subgroup of $G$, so $\Theta_i$ 
must be induced from a proper subgroup of $G$. Since $H_1$ and $H_2$ lie
in different connected components of $\Gamma$, they generate all of $P$.
So $\Theta_i$ is either induced from $P$ or from $\tilde{C}\rtimes P$ for a
proper non-trivial subgroup $\tilde{C}$ of $C$.

If $\Theta_i$ is induced from $P$, then it is induced from a subquotient
of the form described in Theorem \ref{thm:pgroups} and the images of 
$H_1, H_2$ are of order $p$ in it. In fact, since $\langle H_1,H_2\rangle = P$,
this subquotient is a quotient. If it is dihedral or a Heisenberg group
of order $p^3$, then there is an edge between
$H_1$ and $H_2$ - contradiction. Otherwise, it is isomorphic
to $C_p\times C_p$, so $|P| = p|H_1|$.
It follows that $K=\centralCp$ and $P=\centralCp\times H_1$,
and this case was excluded.

From now on we may assume that $\Theta_i$ is induced from a subgroup 
$\tilde{C}\rtimes P$. 
Let $\tilde{K} = \ker(P\rightarrow \Aut(\tilde{C}))$.
Since $H_1, H_2$ are abelian and generate $P$, 
their intersection is normal in $P$, and so is $I=\tilde{K}\cap H_1\cap H_2$. 
Since the image of $\centralCp$ in $\tilde{K}/I$ is non-trivial,
$\Theta_i$ cannot be the relation of Proposition \ref{prop:G20}, so it
must be as described by Lemma \ref{lem:genKnontriv}.
Moreover, since $\Theta_i$ is primitive in its subquotient,
Proposition \ref{prop:kernelprank1} implies that $\tilde{K}/I$
is isomorphic to $D_8$ or has normal $p$-rank one. 
Pick an index $j$ with $\tilde{C}\le C^j$ and $K_j\le \tilde{K}$;
see Notation \ref{not:CK}.
Then $K_j/K_j\cap I$ is canonically identified with a non-trivial normal
subgroup of $\tilde{K}/I$,
and hence is itself isomorphic to $D_8$ or has normal $p$-rank one,
or is isomorphic to $C_2\times C_2$, the latter being only possible if
$\tilde{K}/I\iso D_8$.

If $K_j/K_j\cap I$ is isomorphic to $D_8$ or has normal $p$-rank one, then
$\tilde{K}_j/\tilde{K}_j\cap I$ is cyclic, and so 
$\tilde{K}_j\cap H_1\cap H_2$ is a maximal (with respect to inclusion)
cyclic subgroup of $\tilde{K_j}$
not containing $\centralCp$. But $\tilde{K}_j\cap H_1\cap H_2\le \tilde{K_j}\cap H_1,
\tilde{K}_j\cap H_2$, which are also cyclic and do not contain $\centralCp$,
so necessarily $\tilde{K}_j\cap H_1 = \tilde{K}_j\cap H_2$, and there is
therefore an edge between $H_1$ and $H_2$.

Finally, suppose $K_j/K_j\cap I\cong C_2\times C_2$ and $\tilde{K}/I\iso D_8$.
By Proposition~\ref{prop:Ptoosmall}, these two assumptions and 
the inclusions $\{1\}\lneq K\lneq K_j\lneq \tilde K$ force the index 
$[C:\tilde C]$ to be the product of exactly two primes $l_j$, $l_i$.
In other words, $\tilde K=K=\centralCp\iso C_2$, and $\tilde{K}_j/\tilde{K}_j\cap I,
\tilde K_i/\tilde{K}_i\cap I$ are the two distinct subgroups of $D_8$ isomorphic to
$C_2\times C_2$ . 
The intersections $H_1\cap \tilde K$, $H_2\cap \tilde K$ meet $\centralCp$
trivially, so their images in the quotient $\tilde K/I$ are
either trivial or non-central of order 2. If these images are conjugate,
or if at least one of them is trivial, then either 
$H_1\cap \tilde K_j$ is conjugate to $H_2\cap \tilde K_j$ or 
$H_1\cap \tilde K_i$ is conjugate to $H_2\cap \tilde K_i$; in both cases,
there is an edge between $H_1$ and $H_2$. 
So suppose their images in $\tilde K/I\cong D_8$ are two non-conjugate
non-central subgroups of order 2.
Say, $H_1\cap \tilde K_j$ becomes isomorphic to $C_2$ in $\tilde K/I$, and 
$H_2\cap \tilde K_j$ becomes trivial. 
Then by \cite[Lemma 6.15]{Bouc}, applied to the
subgroup $E=K_i/K_i\cap I\cong C_2\times C_2$ of $P/I$, with $H=H_1/H_1\cap I$
there exists a subgroup $H'$ of $P/I$ that centralises $E$, and a relation 
$$
  \tilde\Omega = H - H' - H\centralCp + H'\centralCp
$$
in $P/I$. Lifting it to $P$, we get a relation
$$
  \Omega = H_1 - H_3 - H_1\centralCp + H_3\centralCp 
$$
for some $H_3\in P$. By Corollary \ref{cor:relmodC}, $H_3\in \cH_m$.
We already showed that the existence of such a relation forces
$H_1$ and $H_3$ to lie in the same connected component. However, 
$H_2\cap \tilde K_j=I=H_3\cap \tilde K_j$, since no non-central element of $D_8$
can lie in one $C_2\times C_2$ and centralise the other one, and
so there is an edge between $H_2$ and $H_3$.
So in this case $[H_1]=[H_2]$ as well.

Finally, to determine the kernel of
\begin{eqnarray}\label{eq:primG}
  [\,\cdot\,]: \>\>\Prim G \lar \F_p^{d-1},
\end{eqnarray}
it suffices to evaluate it on linear combinations of the generators of
$\Prim(G)$ given by Lemma \ref{lem:genKnontriv}. Such a linear
combination is mapped to 0 if and only if the coefficients of all $H\in \cH_m$
are divisible by $p$. We deduce by Lemma \ref{lem:Fpvs} that the map
(\ref{eq:primG}) is an isomorphism.
\end{proof}

\begin{remark}\label{ex:Cprime}
This completes the proof of Theorem \ref{thm:A} in the last
remaining case, when $G=C\rtimes P$ is quasi-elementary with $P$ a $p$-group
and $C$ cyclic of order prime to $p$.

The conditions in Theorem \ref{thm:A}(4) that
describe when such a $G$ has primitive relations
are group-theoretic, but they are rather intricate.
In the special case that $|C|=l\neq p$ is prime, they can be made
completely explicit, and one can list all such $G$
in terms of generators and relations.
We refer the interested reader to \cite{brauer2}, and just make one
remark here.

Suppose that $G$ has a primitive relation.
By Proposition \ref{prop:kernelprank1}, the kernel $K$ of the action of $P$
on $C$ by conjugation is $\{1\}$, $D_8$ or has normal $p$-rank one.
Suppose that $\{1\}\ne K\ne P$ (cf. Example \ref{exer:G20},
Lemma \ref{lem:StructOfP}).
Write $A$ for the image of $P$ in $\Aut C$.
What makes the case $|C|=l$ simpler is that in this case $A$ is cyclic and
the sequence
\begin{equation}
\label{KPAseq}
   1 \to K \to P \to A \to 1
\end{equation}
must split; this makes $P=K\rtimes A$ and $G$ not hard to describe by
generators and relations.

Indeed, suppose the sequence does not split.
If $K$ is cyclic or generalised quaternion, then all
subgroups of $K$ contain the central $\centralCp$, so, using the notation of
section \ref{sec:Knontriv}, $\cH$ consists of subgroups of $P$ that intersect
$K$ trivially. Since there is no subgroup $H$ of $P$ with $H\cap K=\{1\}$ and
surjecting onto $A$ (otherwise $P$
would be a semi-direct product of $H$ by $K$), all subgroups in $\cH$
must be contained in the pre-image under $P\rightarrow \Aut C$ of the
unique index $p$ subgroup. Thus, there is an edge between any two groups in
$\cH_m$, using the notation of Theorem \ref{thm:Knontriv}, and so
$\Prim(G)=\{1\}$. Now suppose $K$ is dihedral or semi-dihedral, and
denote by $C_K$ the unique cyclic index~2 subgroup of $K$.
Since the automorphism of $C_K$ given by any non-central involution of $K$
is not divisible, $P$ not being a semi-direct product by $K$ implies that
it is not a semi-direct product by $C_K$ either. Thus, again, there is an
index $p$ subgroup of $P$ containing
any subgroup of $P$ that does not intersect $C_K$, so the same argument
applies and shows that $\Prim(G)=\{1\}$.

Finally, let us mention that when $C$ has composite order,
it may happen that the sequence \eqref{KPAseq}
does not split, but $G$ still has primitive relations.
The smallest such example that we know is a group $G$ of order
$3934208=2^{11}\cdot 17\cdot 113$, with $C=C_{17}\times C_{113}$, $K=C_8$
and $A=C_{16}\times C_{16}$.\footnote{
In Magma, this group may be given by
PolycyclicGroup$\langle a,b,c,d,e,f,g,h,i,j,k,l,m|\\
  a^2\!=\!f,b^2\!=\!e,c^2\!=\!d,d^2\!=\!h,e^2\!=\!g,f^2\!=\!i,g^2\!=\!k,h^2\!=\!j,i^2,j^2,k^2,l^{17},m^{113},{}^ac\!=\!c\!\cdot\! i,{}^bc\!=\!c\!\cdot\! f,\\
  {}^bd\!=\!d\!\cdot\! i,{}^ce\!=\!e\!\cdot\! i,{}^bl\!=\!l^3,{}^el\!=\!l^9,{}^gl\!=\!l^{13},{}^kl\!=\!l^{16},{}^bm\!=\!m^{48},{}^cm\!=\!m^{42},{}^dm\!=\!m^{69},{}^em\!=\!m^{44},\\
  {}^gm\!=\!m^{15},{}^hm\!=\!m^{15},{}^jm\!=\!m^{112},{}^km\!=\!m^{112}\rangle$}
Here there are no subgroups in $\cH$ mapping onto $A$,
but the images of two elements of $\cH$ may generate the whole of $A$.
This cannot happen when $C$ has prime order.

\end{remark}

\begin{remark}
\label{remcanrel}
Although there is no a priori preferred representative of any class in
$\Prim(G)$, the generators of $\Prim(G)$ in Theorem \ref{thm:A}
for quasi-elementary $G$ are fairly canonical in the following
sense. The results of \S\ref{sec:qegen} show that in
case 4(c) every primitive $G$-relation
$\Theta = \sum_H n_HH$ satisfies the following conditions:
\begin{itemize}
\item There exists at least two subgroups $H$ of $P$ of maximal size among
those that intersect $\centralCp$ trivially such that 
$n_H\not\equiv 0\mod p$.
\item The sum of $n_H$ over all such $H$ is 0 mod $p$.
\item For any $\tilde{C}\le C$, there exists a subgroup $H$ of $G$
that intersects $\tilde{C}$ non-trivially and such that $n_H\neq 0$.
\end{itemize}
Similar remarks apply to the cases 4(a) and 4(b).
\end{remark}

\section{Examples}

\begin{example}\label{ex:SL2(F3)}
Let $G=\SL_2(\bF_3)$. Its Sylow 2-subgroup $S$ is normal in $G$ and is
isomorphic to the quaternion group $Q_8$. The Sylow 2-subgroup and $G$ itself
are the only non-cyclic subgroups of $G$, so $K(G)$ has rank~2. Since $G$ is
not in the list of Theorem \ref{thm:A}, all its relations come from proper
subquotients. By Theorem \ref{thm:pgroups}, $K(S)$ is generated by the
relation lifted from $S/Z(S)\cong C_2\times C_2$. The only other subquotient
of $G$ that has primitive relations is $G/Z(G)\cong A_4$, which is of type
3(a) in Theorem \ref{thm:A}
with $Q$ cyclic. Combining everything we have said and noting that the three cyclic subgroups
of order 4 in $S$ are conjugate in $G$, we see that $K(G)$ is generated~by
\begin{eqnarray*}
  \Theta_1 & = & C_4-C_6-S+G,\\
  \Theta_2 & = & C_2-3C_4+2S.
\end{eqnarray*}
\end{example}

\begin{example}
Let $G=A_5$. Since $G$ is simple, Theorem
\ref{thm:notquasiel} shows that $G$ has a primitive relation and
$\Prim(G)\cong \bZ$ and is generated by any relation in which $G$
enters with coefficient 1.
Using \cite[Theorem 2.16(i)]{Sna-88} or explicitly 
decomposing all permutation characters in $A_5$ into
irreducible characters, we find that
$$
  \Theta=C_2 - C_3 - V_4 + S_3 - D_{10} + G
$$
is a relation (of the form predicted by Theorem \ref{thm:Solomon}).
Theorem \ref{thm:notquasiel} now implies that all Brauer relations in
$G$ can be expressed as integral linear combinations of $\Theta$ and
of relations coming from proper subgroups.
The non-cyclic proper subgroups of $G$ are $V_4$, $S_3$, $D_{10}$ and $A_4$
and their relations induced to $G$ together with $\Theta$ generate $K(G)$.
\end{example}

\begin{example}\label{ex:wreath}
Let $G=C_3\wr C_4$ be the wreath product of $C_3$ by $C_4$. Then the
subspace of $\bF_3^4$ on which $C_4$ acts trivially is a normal subgroup of
$G$ with non-quasi-elementary quotient. Thus, all relations of $G$ are
obtained from proper subquotients by Corollary \ref{cor:quasielquotients}.
\end{example}

\section{An application to regulator constants}
\label{s:appl}

Let $\Theta=\sum_H n_H H$ be a Brauer relation in a group $G$. Write
\[
  \cC_\Theta(\triv) = \prod_H |H|^{-n_H}.
\]
This quantity is called the {\em regulator constant} of the trivial 
$\Z G$-module. We refer the reader to \cite{tamroot} \S2.2 and \cite{Bar-09} \S2.2
for the definition of regulator constants for general $\Z G$-modules
and their properties.
%
Note that $\cC_\Theta(\triv)$ is invariant under induction of
$\Theta$ from subgroups and lifts from quotients 
(using $\sum n_H=\langle\Theta,\triv\rangle=0$), and that 
$\cC_{\Theta+\Theta'}(\triv)=\cC_\Theta(\triv)\cC_{\Theta'}(\triv)$.

As an application of Theorem \ref{thm:A}, we classify,
given a prime number $l$, all finite groups $G$ that have a Brauer
relation $\Theta$ with the property that $\ord_l(\cC_\Theta(\triv))\neq 0$.
Here, $\ord_l$ denotes the (additive) $l$-adic valuation of a non-zero rational
number.
For an example of number theoretic consequences of the theorem, see~\cite{Bar-12}. 
\begin{theorem}\label{thm:regconsttriv}
  Let $G$ be a finite group and $l$ a prime number.
  Then any Brauer relation in $G$ is a sum of a relation
  $\Theta'$ satisfying $\ord_l\cC_{\Theta'}(\triv)=0$ and relations 
  induced and/or lifted from relations $\Theta$ from 
  subquotients $H=(C_l)^d\rtimes Q$ of the following form:
  \begin{enumerate}
    \item
      $d=1$, $Q=C_{p^{k+1}}$, $p\neq l$ prime, $Q$ acting faithfully on $C_l$;
    $\Theta=C_{p^k} - pQ-C_l\rtimes C_{p^k}+pH$; $\cC_\Theta(\triv) = l^{-p+1}$.
    \item
      $d=1$, $Q=C_{mn}$ acting fathfully on $C_l$, $(m,n)=1$, $m\alpha+n\beta=1$;
    $\Theta = H - Q +\alpha(C_n-C_l\rtimes C_n) + \beta(C_m-C_l\rtimes C_m)$;
    $\cC_\Theta(\triv) = l^{\alpha+\beta-1}$.
    \item
      $d>1$, either $Q$ is quasi-elementary and acts faithfully irreducibly
      on $(C_l)^d$, or $H=(C_l\rtimes P_1)\times (C_l\rtimes P_2)$, where
      $P_1$, $P_2$ are cyclic $p$-groups, $p\neq l$, acting faithfully on the
      respective $C_l$;
      \[
      \Theta = H-Q +
      \sum_{U\in\cU}
      \left(U\rtimes N_QU - (C_l)^d\rtimes N_QU\right);
      \]
      $\cC_\Theta(\triv) = l^{|\cU|-d}$,
      where $\cU$ is the set of index $l$ subgroups of $(C_l)^d$ up to 
      $Q$-conjugation.
  \end{enumerate}
\end{theorem}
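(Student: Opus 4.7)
The plan is to deduce the theorem from Theorem \ref{thm:A} by fixing a prime $l$ and computing $\cC_\Theta(\triv)$ on the explicit primitive generators of $\Prim(H)$ as $H$ ranges over the subquotients of $G$ appearing in that classification. Since $\cC_{\Theta_1+\Theta_2}(\triv)=\cC_{\Theta_1}(\triv)\cC_{\Theta_2}(\triv)$ and $\cC_{\bullet}(\triv)$ is invariant under induction from subgroups and inflation from quotients, $\ord_l\cC_{\bullet}(\triv)$ is a homomorphism $K(G)\to\Z$ that is well-behaved under both operations. Writing $V_l(G)$ for its kernel (the ``good'' relations $\Theta'$) and $L_l$ for the subgroup of $K(G)$ generated by induced and lifted relations of types (1)--(3) from subquotients, the theorem is equivalent to $K(G)=V_l(G)+L_l$, which I would prove by induction on $|G|$.

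In the inductive step, Theorem \ref{thm:A} writes any Brauer relation as an integer multiple of a primitive generator of $\Prim(G)$ (provided $G$ appears in its list) plus an imprimitive relation induced or lifted from proper subquotients. The imprimitive part lies in $V_l(G)+L_l$ by the inductive hypothesis, so it suffices to check, case by case, that the primitive generator of $\Prim(G)$ is itself in $V_l(G)+L_l$.

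The easy cases are 1 (dihedral $2$-groups), 2 (Heisenberg group of order $p^3$), 4a, 4b and 4c. In each, the generator has the shape $H-H'+ZH'-ZH$ with $|H|=|H'|$ and $|ZH|=|ZH'|$, or $\sum_U\mu(|U|)(UH-UH')$ with $|H|=|H'|$; substituting into $\cC_\Theta=\prod|H|^{-n_H}$ yields $\cC=1$ at once, so these generators belong to $V_l(G)$ for every $l$. For soluble case 3a and case 3b, where $G\iso\F_{l_0}^d\rtimes Q$ for a specific prime $l_0$, the formulas in the table of Theorem \ref{thm:A} give $\cC=l_0^{1-p}$ when $d=1$ and $Q=C_{p^{k+1}}$, $\cC=l_0^{\alpha+\beta-1}$ when $d=1$ and $Q=C_{mn}$, and $\cC=l_0^{|\cU|-d}$ for $d>1$ and in case 3b, by a direct computation matching the exponents in types (1), (2), (3). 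If $l=l_0$ the generator itself is a list relation; if $l\ne l_0$ then $\ord_l\cC=0$ and the generator lies in $V_l(G)$.

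The main obstacle is case 3a non-soluble, where $G=S^d\rtimes Q$ with $S$ non-abelian simple and $Q$ quasi-elementary, and $\Prim(G)$ is only characterised as being generated by ``any relation of the form $G+\sum_{H\ne G}a_H H$''. There is no closed formula for $\cC$, and concrete examples such as $A_5$ (for which one computes $\cC=1/6$) show that the generator genuinely has non-trivial $\ord_l$ at several primes simultaneously. The strategy here is to exploit the freedom to modify the generator by any imprimitive relation without changing its class in $\Prim(G)$, then subtract induced/lifted list relations coming from proper subquotients of $G$ of the form $\F_l^d\rtimes Q'$ until the $l$-part of $\cC$ vanishes. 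Such subquotients exist for every prime $l$ dividing $|G|$, either arising from $l$-local structure in $S^d$ or from the quasi-elementary $Q$; verifying that they always provide enough cancellation is the subtle point, and combined with the inductive hypothesis applied to the imprimitive remainder this completes the proof, the explicit exponents in (1)--(3) dropping out of the soluble calculations.
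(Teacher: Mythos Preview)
Your soluble analysis is fine and matches the paper: once one is reduced to soluble $G$, Theorem \ref{thm:A} supplies explicit generators of $\Prim(G)$, and for cases 1, 2, 4a--4c the generators visibly have $\cC(\triv)=1$, while in cases 3a (soluble) and 3b the generators are exactly the list relations (1)--(3) with the stated exponents. So far so good.

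The genuine gap is the non-soluble case. You correctly flag it as ``the main obstacle'' and then assert that one can always find subquotients of the form $\F_l^d\rtimes Q'$ providing ``enough cancellation'', but you give no argument for this. Concretely: for a primitive generator $\Theta$ of $\Prim(G)$ with $G$ non-soluble as in case 3a, you would need to show that $\ord_l\cC_\Theta(\triv)$ lies in the subgroup of $\Z$ generated by the values $\ord_l\cC_\Psi(\triv)$ as $\Psi$ ranges over list relations from proper subquotients. Nothing in Theorem \ref{thm:A} tells you this; the theorem only says \emph{some} relation with $n_G=\pm1$ generates $\Prim(G)$, with no control on the other coefficients. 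Checking it for $A_5$ is not a proof, and when $\Prim(G)\cong\Z$ (cyclic $Q$) you cannot even fall back on a torsion argument.

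The paper bypasses this entirely by invoking Dress's induction theorem (Theorem \ref{thm:Dress} and Corollary \ref{cor:Dress}): for each fixed $l$ there is a $\Z_l$-isomorphism $R=G+\sum_H\alpha_H H$ with all $H$ soluble, and multiplying any relation $\Theta$ by $R$ writes $\Theta$ as a $\Z_l$-isomorphism (which automatically has $\ord_l\cC(\triv)=0$, since the underlying $\Z_l[G]$-modules are isomorphic) plus relations induced from soluble subgroups. This reduces the whole theorem to the soluble case in one stroke, and is the missing ingredient in your argument.
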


\begin{corollary}
\label{cor:regconstsubquo}
  A group $G$ has a Brauer relation $\Theta$ with
  $\ord_l(\cC_\Theta(\triv))\neq 0$ if and only if it has a subquotient
  isomorphic either to $C_l\times C_l$ or to $C_l\rtimes C_p$ with $C_p$ of
  prime order acting faithully on $C_l$.
\end{corollary}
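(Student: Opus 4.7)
The plan is to derive the corollary directly from Theorem~\ref{thm:regconsttriv}, using the fact that $\cC_\Theta(\triv)$ is multiplicative in $\Theta$ and invariant under induction from subgroups and inflation from quotients.

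For the ``if'' direction, suppose first that $G$ has a subquotient $H\iso C_l\times C_l$. The $H$-relation $1-\sum_C C+lH$ from Example~\ref{ex:cpcp} has
$$
  \cC(\triv)=1\cdot l^{-(l+1)}\cdot l^{2l}=l^{l-1},
$$
so $\ord_l\ne 0$. Inducing from a subgroup and inflating from a quotient (the combined ``indufting'' operation of \S\ref{sec:applics}) produces a Brauer relation in $G$ with the same regulator constant. If instead $G$ has a subquotient $H\iso C_l\rtimes C_p$ with $C_p$ of prime order acting faithfully, then $p\ne l$ (since $\Aut C_l$ has order $l-1$), and the relation of Theorem~\ref{thm:regconsttriv}(1) with $k=0$ gives $\cC_\Theta(\triv)=l^{1-p}$, again of non-zero $l$-valuation.

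For the ``only if'' direction, let $\Theta$ be a Brauer relation in $G$ with $\ord_l\cC_\Theta(\triv)\ne 0$. By Theorem~\ref{thm:regconsttriv}, we may write $\Theta=\Theta'+\sum_i\Theta_i$ with $\ord_l\cC_{\Theta'}(\triv)=0$ and each $\Theta_i$ induced and/or lifted from a subquotient $H_i$ of the form listed in (1), (2), or (3) of the theorem. By multiplicativity of $\cC(\triv)$, some $\Theta_i$ must have $\ord_l\cC_{\Theta_i}(\triv)\ne 0$, so it suffices to check that each such $H_i$ contains one of the two advertised subquotients. In types (1) and (2), we have $H_i=C_l\rtimes Q$ with $Q$ cyclic of order prime to $l$ (since $Q$ injects into $\Aut C_l=C_{l-1}$), and picking any prime $p\mid |Q|$ gives $C_l\rtimes C_p\le H_i$ with $C_p$ acting faithfully (the unique order-$p$ subgroup of a cyclic group acting faithfully on $C_l$ still acts faithfully). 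In type (3a) with $d\ge 2$, the normal subgroup $\F_l^d\normal H_i$ already contains $C_l\times C_l$; in type (3b), the two copies of $C_l$ in the direct product generate a $C_l\times C_l$ subgroup. Transitivity of the subquotient relation finishes the proof.

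I do not anticipate a genuine obstacle: the only thing that requires care is to notice that in cases (1) and (2) the group $Q$ acts faithfully on $C_l$, so $|Q|$ is automatically coprime to $l$ and any prime divisor $p$ of $|Q|$ satisfies $p\ne l$ and has the order-$p$ subgroup act faithfully on $C_l$.
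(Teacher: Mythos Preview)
Your proof is correct and follows the same route as the paper: the ``if'' direction exhibits an explicit relation in the subquotient (Example~\ref{ex:cpcp} for $C_l\times C_l$, Example~\ref{exer:G20}/Theorem~\ref{thm:regconsttriv}(1) for $C_l\rtimes C_p$) and indufts it to $G$, while the ``only if'' direction reads off from Theorem~\ref{thm:regconsttriv} that every listed $H_i$ has one of the two subquotients. The paper's proof is terser in the converse (it simply asserts that ``all groups listed there have a subquotient of the required type''), whereas you spell out the case analysis; this is fine.

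One minor slip: in your $C_l\times C_l$ computation you have effectively computed $\prod_H|H|^{n_H}$ rather than the paper's convention $\cC_\Theta(\triv)=\prod_H|H|^{-n_H}$, so the correct value is $l^{1-l}$, not $l^{l-1}$. This is harmless for the conclusion since $\ord_l$ is nonzero either way.
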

\begin{proof}
If $G$ has a subquotient $C_l\times C_l$, respectively $C_l\rtimes C_p$, then
the induction/lift of a relation from Example \ref{ex:cpcp}, respectively 
\ref{exer:G20} is as required. The
converse immediately follows from Theorem \ref{thm:regconsttriv}, noting that
all groups listed there have a subquotient of the required type.
\end{proof}

We begin by reducing the theorem to soluble groups.
\begin{definition}
Given a prime number $l$, write $\Z_l$ for the ring of $l$-adic integers.
We call a Brauer relation 
$
  \Theta = \sum_H H - \sum_{H'}H'
$ in $G$
a \emph{$\Z_l$-isomorphism} if 
$$
  \bigoplus_H \Z_l[G/H] \iso \bigoplus_{H'} \Z_l[G/H'],
$$
or equivalently (see \cite[Lemma 5.5.2]{Benson}) if
\[
\bigoplus_H \F_l[G/H] \iso \bigoplus_{H'} \F_l[G/H'].
\]
\end{definition}
The following result is a slight strengthening of \cite[Theorem 5.6.11]{Benson}:
\begin{theorem}[Dress's induction theorem]\label{thm:Dress}
Let $G$ be a finite group and $l$ a prime number. There exists a $\Z_l$-isomorphism
in $G$ of the form
$$
  G + \sum_H \alpha_H H,\quad\qquad \alpha_H\in \Z,
$$
the sum taken over those subgroups $H$ of $G$
for which $H/O_l(H)$ is quasi-elementary. Here $O_l(H)$ is 
the $l$-core of $H$ (the largest normal $l$-subgroup).
\end{theorem}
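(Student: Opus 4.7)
My plan is to deduce Theorem~\ref{thm:Dress} from the classical Dress induction theorem (Benson, Theorem~5.6.11) via a subgroup-class inclusion. Benson's theorem provides a $\Z_l$-isomorphism $\Theta = G + \sum_H \alpha_H H \in B(G)$ with $\alpha_H \in \Z$ and with $H$ ranging over the \qe{l} subgroups of $G$ (called $l$-hyperelementary in Benson's terminology), namely those admitting a normal cyclic subgroup $C$ of order coprime to $l$ with $H/C$ an $l$-group---equivalently, by Schur-Zassenhaus, $H = C \rtimes P$ for $P$ a Sylow $l$-subgroup.

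The only task is to verify that every \qe{l} subgroup $H$ satisfies the (weaker) hypothesis of Theorem~\ref{thm:Dress} that $H/O_l(H)$ is quasi-elementary. Indeed, $O_l(H)$ is a normal $l$-subgroup of $H$, so $O_l(H) \cap C = \{1\}$ because $|C|$ is coprime to $l$. Consequently, the image of $C$ in $H/O_l(H)$ is a normal cyclic subgroup of prime-to-$l$ order, and the quotient $H/(C \cdot O_l(H))$ is in turn a quotient of $H/C \cong P$, hence is itself an $l$-group. Thus $H/O_l(H)$ is again \qe{l}, and in particular quasi-elementary.

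Putting these together, the relation $\Theta$ furnished by Benson's theorem already has every summand $H$ satisfying the condition of Theorem~\ref{thm:Dress}, with $G$ appearing with coefficient $1$ and integer coefficients on the remaining terms; no further modification is required. The \emph{strengthening} over Benson's formulation consists in enlarging the admissible class of support subgroups---from \qe{l} subgroups to those $H$ with $H/O_l(H)$ merely quasi-elementary, possibly with respect to a prime $p \ne l$---and this incurs no extra work because the original class is already contained in the enlarged one. There is essentially no genuine obstacle: should one wish to avoid invoking Benson, an alternative would be to proceed by induction on $|G|$, combining Solomon's induction theorem (Theorem~\ref{thm:Solomon}) with a Bezout-style normalization of the $G$-coefficient, but this would only reprove Benson's result at additional cost.
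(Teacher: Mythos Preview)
Your proposal rests on a mischaracterisation of Benson's Theorem 5.6.11, and there are two separate problems.

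First, the class of subgroups appearing in Dress's induction theorem for $\F_l$- (equivalently $\Z_l$-) representations is not the class of $l$-quasi-elementary (``$l$-hyperelementary'') groups you describe, but the class of $l$-\emph{hypo}elementary groups --- those $H$ with $H/O_l(H)$ cyclic. These two classes are genuinely different: $S_3$ is $3$-hypoelementary but not $3$-quasi-elementary, and $2$-quasi-elementary but not $2$-hypoelementary. If you correct the class to $l$-hypoelementary, your step~2 becomes vacuous (cyclic is already quasi-elementary), and the computation you carry out to show that $H/O_l(H)$ is $l$-quasi-elementary when $H$ is $l$-quasi-elementary is beside the point.

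Second, and more seriously, the paper says explicitly that Benson's statement is ``somewhat weaker'' and that the required form must be extracted from Benson's \emph{proof}. Benson's 5.6.11 does not directly hand you an element of $B(G)$ of the form $G + \sum_H \alpha_H H$ with integer $\alpha_H$ that becomes zero in $R_{\F_l}(G)$; it is formulated at the level of Green/Mackey functors (or, in the idempotent language the paper uses, one prime $q$ at a time in $B(G)\otimes\Z_{(q)}$). The passage to an integral Burnside-ring identity with $G$-coefficient exactly $1$ is precisely what the paper's prime-by-prime idempotent argument together with the Bezout combination accomplishes. Your assertion ``Benson's theorem provides a $\Z_l$-isomorphism $\Theta = G + \sum_H \alpha_H H \in B(G)$ with $\alpha_H \in \Z$'' is therefore the content to be proved, not a result you may cite.

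In short, the logical shape you propose (deduce the theorem from a stronger existence statement via a class inclusion) is sound, but the ``stronger existence statement'' you invoke is not Benson's theorem as stated; establishing it is exactly the work the paper's proof sketch does.
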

\begin{proof}[Sketch of the proof]
This is shown in the course of the proof of \cite[Theorem 5.6.11]{Benson},
but since the actual statement of the theorem is somewhat weaker,
we summarise for the benefit of the reader the main ideas of the proof.
It is enough to prove that for any prime number $q$, there exists a
$\Z_l$-isomorphism in $G$ of the form
\[
aG + \sum_H \alpha_H H,
\]
where the sum is over subgroups $H$ for which $H/O_l(H)$ is quasi-elementary,
$\alpha_H\in \Z$, and $a\in \Z$ is not divisible by $q$. In other words,
it is enough to exhibit suitable elements of $B(G)\otimes \Z_{(q)}$ that become
trivial under the natural map $B(G)\otimes \Z_{(q)}\rightarrow R_{\F_l}(G)\otimes \Z_{(q)}$.
To do that, one first writes $\triv\in B(G)\otimes \Z_{(q)}$ as a sum of primitive
idempotents $\triv = \sum_H e_H$, which are described in
\cite[Corollary 5.4.8]{Benson}, with the property that each $e_H$ is induced from
$B(H)\otimes \Z_{(q)}$ (\cite[Theorem 5.4.10]{Benson}).
One then shows that under the map
\[
B(G)\otimes \Z_{(q)}\longrightarrow R_{\F_l}\otimes \Z_{(q)},
\]
only those $e_H$ map to non-zero idempotents, for which $H/O_l(H)$ is $q$-quasi-elementary.
Since each $e_H$ is a linear combination of $G$-sets $G/U$, $U\le H$, with
coefficients whose denominators are not divisible by $q$, the result follows.
\end{proof}

\begin{corollary}\label{cor:Dress}
  Let $G$ be a finite group and $l$ a prime number. Any Brauer relation can be written
  as a sum of relations induced from soluble subgroups of $G$ and a $\Z_l$-isomorphism.
\end{corollary}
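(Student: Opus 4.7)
The strategy is to imitate the ideal-theoretic argument of Corollary \ref{cor:alaDeligne}, using Dress's induction theorem to supply the distinguished element with coefficient $1$ at $G$. The key observation is that the set of $\Z_l$-isomorphisms is not merely a subgroup of $K(G)$ but an ideal of the Burnside ring $B(G)$: it is the kernel of the ring homomorphism $B(G) \to R_{\F_l}(G)$, $[X] \mapsto \F_l[X]$, which sends products to tensor products of permutation representations.

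Granted this, I would proceed as follows. Let $\Theta$ be an arbitrary Brauer relation in $G$. Apply Theorem \ref{thm:Dress} to produce a $\Z_l$-isomorphism
\[
  R \;=\; G + \sum_H \alpha_H H, \qquad \alpha_H \in \Z,
\]
in which every $H$ satisfies $H/O_l(H)$ quasi-elementary. Using the identities $G \cdot \Theta = \Theta$ and $H \cdot \Theta = \Ind^G \Res_H \Theta$ (the Remark in \S\ref{s:first}), expand
\[
  \Theta \;=\; R\cdot \Theta \;-\; \sum_H \alpha_H\,\Ind^G \Res_H \Theta.
\]
The first summand $R\cdot\Theta$ is a $\Z_l$-isomorphism by the ideal property noted above, while each term of the second sum is, by construction, a relation induced from the subgroup $H$.

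It remains to verify that every $H$ contributing to $R$ is soluble. This is immediate from its defining property: $O_l(H)$ is an $l$-group and hence nilpotent, the quotient $H/O_l(H)$ is quasi-elementary and hence an extension of a cyclic group by a $p$-group (in particular soluble), and an extension of soluble groups is soluble. There is no real obstacle in this argument — all the substance is packaged inside Dress's theorem; the corollary merely repackages its output via the Burnside-ring module action of $B(G)$ on relations.
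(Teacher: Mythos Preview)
Your argument is correct and is essentially identical to the paper's own proof: both take the Dress element $R$, use that $\Z_l$-isomorphisms form an ideal in $B(G)$, and expand $R\cdot\Theta=\Theta+\sum_H\alpha_H\Ind^G\Res_H\Theta$. Your write-up is in fact slightly more explicit than the paper's, in that you justify the ideal property (kernel of the ring map $B(G)\to R_{\F_l}(G)$) and spell out why $H/O_l(H)$ quasi-elementary forces $H$ soluble, both of which the paper simply asserts.
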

\begin{proof}
  Let $\Theta$ be an arbitrary Brauer relation in $G$, let $R=\triv_G +
  \sum_H \alpha_H H$ be a $\Z_l$-isomorphism, as given by Theorem \ref{thm:Dress}.
  In particular, all subgroups $H$ in the sum are soluble. Since the subgroup
  of $B(G)$ that consists of $\Z_l$-isomorphisms forms an ideal in $B(G)$, we
  see that
  \[
  \Theta\cdot R = \Theta + \sum_H \alpha_H \Ind^G\Res_H \Theta
  \]
  is a $\Z_l$-isomorphism, and the claim is established.
\end{proof}

\begin{proof}[Proof of Theorem \ref{thm:regconsttriv}]
It is easy to see that if $R$
is a $\Z_l$-isomorphism, then $\ord_l(\cC_R(\triv))=0$ (and in fact,
the same is true with $\triv$ replaced by any finitely generated $\Z[G]$-module).
Thus, Corollary \ref{cor:Dress} reduces the proof of the theorem to the case
that $G$ is soluble.

Writing $\Theta$ as a sum of primitive relations listed in Theorem
\ref{thm:A}, we see immediately by inspection that the relations
$\Theta'$ that generate $\Prim(G)$ in the cases 1, 2, and 4 satisfy
$\cC_{\Theta'}(\triv) =1$. The remaining assertions of the theorem
follow from a direct calculation for the generators of $\Prim(G)$ in the
case 3.
\end{proof}


\end{document}